\numberwithin{equation}{section}% [if desired]
\newtheorem{theorem}{Theorem}
\newtheorem{proposition}[theorem]{Proposition}
\newtheorem{lemma}[theorem]{Lemma}
\numberwithin{theorem}{section}% [if desired]
\theoremstyle{definition}
\theoremstyle{definition}\newtheorem{definition}[theorem]{Definition}
\theoremstyle{definition}\newtheorem{remark}[theorem]{Remark}
\theoremstyle{definition}\newtheorem{example}[theorem]{Example}
\theoremstyle{definition}\newtheorem{construction}[theorem]{Construction}
\theoremstyle{definition}
\theoremstyle{definition}
\def\proofof [#1] {\noindent {\bf Proof of #1. } }
\def\al #1.{{\mathcal{#1}}}
\renewcommand{\AA}{\mathfrak{A}}
\newcommand{\A}{\mathcal{A}}
\newcommand{\U}{\mathcal{U}}
\newcommand{\RR}{\mathcal{R}}
\newcommand{\B}{\mathcal{B}}
\newcommand{\I}{\mathcal{I}}
\newcommand{\K}{\mathcal{K}}
\renewcommand{\H}{\mathcal{H}}
\newcommand{\ZZ}{\mathcal{Z}}
\newcommand{\F}{\mathcal{F}}
\newcommand{\N}{\mathbb{N}}
\newcommand{\NN}{\mathbb{N}_0}
\newcommand{\R}{\mathbb{R}}
\newcommand{\C}{\mathbb{C}}
\renewcommand{\S}{{S^1}}
\newcommand{\Z}{\mathbb{Z}}
\newcommand{\unit}{\mathbf{1}}
\newcommand{\bp}{\begin{proof}}
\newcommand{\ep}{\end{proof}}
\newcommand{\bdp}{\begin{dproof}}
\newcommand{\edp}{\end{dproof}}
\newcommand{\ra}{\rightarrow}
\newcommand{\Uone}{\operatorname{U}(1)}
\newcommand{\Loop}{\operatorname{L}}
\newcommand{\Diff}{\operatorname{Diff}(\S)}
\newcommand{\FF}{\mathfrak{f}}
\newcommand{\PSL}{\operatorname{PSL(2,\R)}}
\newcommand{\PSI}{\operatorname{PSL(2,\R)}^{(\infty)}}
\newcommand{\Mat}{\operatorname{M}}
\newcommand{\GL}{\operatorname{GL}}
\newcommand{\SVir}{\operatorname{SVir}}
\newcommand{\SVirc}{\operatorname{SVir}_c}
\newcommand{\tr}{\operatorname{tr }}
\newcommand{\Ad}{\operatorname{Ad }}
\newcommand{\rmd}{\operatorname{d}}
\newcommand{\rmi}{\operatorname{i}}
\newcommand{\rme}{\operatorname{e}}
\newcommand{\lw}{\operatorname{lw}}
\newcommand{\eps}{\varepsilon}
\newcommand{\de}{\delta}
\newcommand{\pR}{{\pi_R}}
\newcommand{\supp}{\operatorname{supp }}
\newcommand{\dom}{\operatorname{dom }}
\renewcommand{\ker}{\operatorname{ker }}
\renewcommand{\dim}{\operatorname{dim }}
\newcommand{\Cci}{C^{\infty}}
\newcommand{\boxy}{\hfill $\Box$\vspace{5mm}}
\newcommand{\End}{\operatorname{End}}
\newcommand{\diag}{\operatorname{diag}}
\newcommand{\ind}{\operatorname{ind }}
\renewcommand{\lg}{\mathfrak{g}}
\newcommand{\id}{\operatorname{id}}
\newcommand{\ie}{{i.e.,\/}\ }
\newcommand{\eg}{{e.g.\/}\ }
\newcommand{\cf}{{cf.\/}\ }
\title{Superconformal Nets and Noncommutative Geometry}
\author{
{\sc Sebastiano Carpi}\footnote{Supported in part by the ERC Advanced Grant 227458 OACFT ``Operator Algebras and Conformal
Field Theory", PRIN-MIUR, GNAMPA-INDAM and the EU network ``Noncommutative Geometry".}\\
\small Dipartimento di Economia,
Universit\`a di Chieti-Pescara ``G. d'Annunzio''\\
\small Viale Pindaro, 42, I-65127 Pescara, Italy\\
\small E-mail: {\tt s.carpi@unich.it}\\
\phantom{X}\\
{\sc Robin Hillier}$^*$\footnote{Marie-Curie Fellow of the Istituto Nazionale di Alta Matematica. Formerly supported by a one-year starting scholarship of the Gottlieb Daimler- und Karl Benz-Stiftung.}\\
\small School of Mathematics,
Cardiff University\\
\small Senghennydd Road, Cardiff CF24 4AG, United Kingdom\\
\small E-mail: {\tt hillierro@cardiff.ac.uk}\\
\phantom{X}\\
{\sc Roberto Longo}$^*$\\
\small Dipartimento di Matematica,
Universit\`a di Roma ``Tor Vergata'',\\
\small Via della Ricerca Scientifica, 1, I-00133 Roma, Italy\\
\small E-mail: {\tt longo@mat.uniroma2.it}
\phantom{X}\\
}
\date{2 August 2013}
\begin{document}

%\linenumbers

\maketitle

\begin{abstract}
This paper provides a further step in the program of studying superconformal nets over $\S$ from the point of view of noncommutative geometry. For any such net $\A$ and any family $\Delta$ of localized endomorphisms of the even part $\A^\gamma$ of $\A$, we define the locally convex differentiable algebra $\AA_\Delta$ with respect to a natural Dirac operator coming from supersymmetry. Having determined its structure and properties, we study the family of spectral triples and JLO entire cyclic cocycles associated to elements in $\Delta$ and show that they are nontrivial and that the cohomology classes of the cocycles corresponding to inequivalent endomorphisms can be separated through their even or odd index pairing with K-theory in various cases. We illustrate some of those cases in detail with superconformal nets associated to well-known CFT models, namely super-current algebra nets and super-Virasoro nets. All in all, the result allows us to encode parts of the representation theory of the net in terms of noncommutative geometry. 
\end{abstract}

\vspace{8\baselineskip}

\pagebreak

\tableofcontents

\section{Introduction}

In this article we would like to explore certain aspects of conformal quantum field theory that emerge by looking at a superconformal net from the point of view of noncommutative geometry. By such a connection between two important areas of mathematics, we hope to gain new insight into conformal field theory. In order to make this article reasonably self-contained, we provide almost all the necessary basics in (super-)conformal field theory and noncommutative geometry.

By conformal field theory \cite{DMS96} we actually mean here chiral conformal quantum field theory on the unit circle $\S$ (i.e., generated by fields depending on one light-ray coordinate only) and we work within the operator algebraic approach to quantum field theory \cite{Haag} in its chiral conformal field theory version, see e.g. \cite{BGL93,BMT88,FJ96,FG,GL,KL04,Lo89,Lon90,Was,Xu07} for some representative works in this and related settings. 

In the local case, the basic object is a local conformal net: a net of von Neumann algebras, indexed by the proper open intervals of $\S$, satisfying a set of natural axioms. Such a net contains always a Virasoro net (the net associated to the unitary vacuum representation of the Virasoro Lie algebra with given central charge) as a minimal conformal subnet. Then we call a net superconformal if it satisfies a certain graded-local version of those axioms and contains a super-Virasoro net introduced in Example \ref{ex:superVir} (a net associated to the ($N=1$) super-Virasoro algebra, a graded extension of the Virasoro algebra). 

By noncommutative geometry we mean here the operator algebraic extension of differential geometry according to Connes \cite{Co-NCDG,Co1}. The basic objects are spectral triples $(A,\H_\pi,D)$, whose three components generalize the algebra of smooth functions on a manifold, the left representation on its spinor bundle, and the Dirac operator on that bundle, respectively. 
The crucial point is that spectral triples give rise to certain Chern characters in cyclic cohomology (the JLO cocycles), dual generalizations of the Chern characters in de Rham cohomology, which then in turn pair with $K$-theory: thus one can compute numbers characterizing geometric structures.

Concerning our motivations, we mention that crucial structural objects for a given net are its sectors, the unitary equivalence classes of its irreducible representations (cf. \cite{DHR}). One can envisage that the sectors are to be the basic ingredients for an index theorem in the quantum infinite-dimensional case (\cf \cite{Lo01}). Moreover, by considering Weyl's asymptotic expansion of the Laplace operator on a compact manifold, it has been shown in \cite{KL06} that the conformal Hamiltonian of a modular conformal net in a given sector has a similar meaning as the Laplace operator on an infinite-dimensional manifold. Since in the commutative setting the Dirac operator is an odd square-root of the Laplacian, it suggests itself to look at graded-local conformal nets whose conformal Hamiltonian in certain representations has an odd square-root. This led to consider superconformal nets and in particular Ramond representations of such nets \cite{CKL}.

In this light, we would like to study spectral triples $(\AA,(\pi,\H_\pi),Q)$ in superconformal quantum field theory. In \cite{CHKL}, together with Y. Kawahigashi, we dealt with (nets of) graded spectral triples associated to the unitary representations with positive energy of the 
Ramond super-Virasoro algebra. In the present article we  define a different but related version of spectral triples, which shall be one of the objectives of Section \ref{sec:gen-ST}. Once a ($\theta$-summable) spectral triple $(\AA,(\pi,\H_\pi),Q)$ for a given superconformal net is fixed, we obtain an entire cyclic cocycle, called JLO cocycle. Changing now the representation $\pi$ of our superconformal net changes the spectral triple. But does the cohomology class of the cocycle also change? If we have no abstract reasoning available to answer this question, we could try to find suitable K-classes which separate the cocycles corresponding to the several representations: since they are dual objects, there is a natural integer-valued index pairing, and so we should be able to do explicit computations. Summing up, we have in mind the following association:
\begin{center}
\begin{tabular}{m{4.5cm}m{0.5cm}m{4.5cm}m{0.5cm}m{4cm}}
\qquad \qquad \textbf{CFT}  && \qquad \qquad \textbf{NCG} &&\quad \textbf{Number} \\
 &&&&\\
$ \begin{array}{c} \mbox{superconformal net}\\ \mbox{(and its representations)} \end{array} $
 &$\mapsto$ & $\left\langle\begin{array}{c} \mbox{ entire cyclic cocycles}\\ \mbox{ K-classes} \end{array} \right\rangle$ &$\mapsto$ & $\Z$-valued pairing
\end{tabular}
\end{center}
If this procedure gives a non-trivial result, then we can express information about the original superconformal net in terms of noncommutative geometry. Moreover, in later steps we may study the whole extent of this relationship and a possible inversion of the association. 

We would like to mention that the present approach will be somehow differential-geometric involving some differentiability and admissibility conditions on the representations of our net. A general and purely topological approach, dispensing with supersymmetry, has been recently achieved in \cite{CCHW,CCH} for completely rational local conformal nets. We expect a deeper relation to the present work in the case where the completely rational net is the even part of a superconformal net. 

Let us now briefly explain how we are going to put our plan into practice in Sections \ref{sec:gen-ST} and \ref{sec:gen-pairing}. 
Sections \ref{sec:CFT} and \ref{sec:NCG} contain the necessary preliminaries on superconformal nets and noncommutative geometry. 
Since they are to a large extent collections of known facts, included in order to keep this article reasonably self-contained, we provide proofs only for the new results there, while referring to literature otherwise. 

Given a superconformal net $\A$, we shall fix an irreducible Ramond representation $(\pR,\H_R)$. It will be either graded or ungraded. In such a representation, there exists automatically a square-root of the conformal Hamiltonian $L_0^\pR$ up to an additive constant, namely $Q=G_0^\pR$ coming from the super-Virasoro algebra, and it is odd in case the representation is graded. This gives rise to a derivation $\delta$ on $B(\H_R)$, and usually satisfies the condition for $\theta$-summability: $\rme^{-tQ^2}$ is trace-class on $\H_R$, for all $t>0$. Then the JLO formula defines an entire cyclic cocycle over the even subalgebra, which gives rise to a pairing in K-theory. Actually, we have a family of cocycles since we may perform this construction for every local algebra $\A(I)$, $I\in\I$, as well as for nice global algebras like the universal C*- or von Neumann algebra. While we kept this generality in \cite{CHKL}, we shall recognize below that the local algebras are not sufficient for our task and we have to choose a global one in Definition \ref{def:gen2-AA}. 

As is known, locally normal localized representations of the even subnet $\A^\gamma$ correspond to localized transportable (DHR) endomorphisms of the universal C*-algebra $C^*(\A^\gamma)$ or its enveloping von Neumann algebra $W^*(\A^\gamma)$. 

Given a family of localized covariant endomorphisms $\Delta$ we consider the largest  subalgebra $\AA_\Delta\subset W^*(\A^\gamma) \cap \dom(\delta)$ such that $(\AA_\Delta, (\pR \circ \rho ,\H_R), Q)$ is a spectral triple for all $\rho \in \Delta$, \cf Definition \ref{def:gen2-AA}. 
There is a natural locally convex topology on $\AA_\Delta$ which guarantees that the JLO cocycle $\tau_\rho$ associated to the $\theta$-summable spectral triple 
$(\AA_\Delta, (\pR \circ \rho ,\H_R), Q)$ is entire for all $\rho \in \Delta$. This way we will end up with a family of entire cyclic cocycles 
$(\tau_\rho)_{\rho\in\Delta}$ corresponding to the family of localized endomorphisms $\Delta$: geometric quantities associated to quantum field theoretical ones in a non-trivial way. Imposing further optional conditions on the set $\Delta$ like differentiable transportability as in Definition \ref{def:gen2-Delta} results in a very rich structure and several stability properties of $\AA_\Delta$, the spectral triples and the cocycles.

Now the above JLO cocycles $\tau_\rho$ might be all cohomologous. That this is actually not the case, for suitably chosen $\Delta$, can be proved by pairing the family of cocycles with a suitable family of $K_*(\AA_\Delta)$-classes, represented by idempotent or invertible elements in the case of even or odd spectral triples, respectively. The right representatives of these classes (or at least one possible solution) will be constructed in Section \ref{sec:gen-pairing}. They are related to certain finite-dimensional subprojections of the positive eigenspace $\H_{R,+}$ of the grading unitary or to certain shift unitaries on the eigenspaces of $Q$, respectively.
With these two families at hand -- the cocycles and K-classes corresponding to the representations -- we then obtain a well-defined index pairing between them, separating the (JLO cocycles corresponding to the) equivalence classes of representations in $\Delta$ as described in Theorem \ref{th:gen2-pairing-even} and \ref{th:gen2-pairing-odd}. 

After these general investigations and constructions, we shall apply them in Section \ref{sec:loop} to important models of superconformal nets: super-current algebra nets and the super-Virasoro net. Our goal will be to show that our assumptions and conditions make sense, to understand the geometric, algebraic, and physical meaning of the involved objects better, and to see how far we can go with our correspondence between superconformal nets and noncommutative geometry. This way, our  work becomes self-contained and complete, but offering many potential interactions with related issues. 

One of those issues is the study of higher degree of supersymmetry, \ie super-Virasoro algebras involving further odd fields apart from $G$. This can be done for arbitrary degree, but the first and already very interesting case with new emerging structures is the $N=2$ super-Virasoro algebra (in contrast to the usual $N=1$ super-Virasoro algebra investigated in the present paper and in \cite{CHKL,CKL}). The corresponding nets, their representations and extensions were studied by us together with Y. Kawahigashi and F. Xu in \cite{CHKLX}. The noncommutative differential-geometric aspects and the resulting index pairing there are similar to the present ones, but with some important differences. Other interesting future directions could be related to the results in \cite{BCL,CN}.

This work is based in part on RH's PhD thesis at Universit\`a di Roma``Tor Vergata" \cite{PhD-Thesis}.

\bigskip

Throughout this article all (associative) $*$-algebras and all $*$-representations are assumed to be unital. Nevertheless, sometimes we will repeat these assumptions simply in order to underline them.

\section{Superconformal nets}\label{sec:CFT}

We provide here a brief summary on (graded-)local conformal nets, see also \cite{CKL} and the references here below. 

Let $\S = \{ z\in \C: |z|=1 \}$ be the unit circle, let $\Diff$ be the infinite-dimensional (real) Lie group of orientation preserving smooth diffeomorphisms of $\S$ and denote by $\Diff^{(n)}$, $n\in \N \cup \{ \infty \}$, the corresponding $n$-cover. In particular $\Diff^{(\infty)}$ is the universal covering group of $\Diff$. The group $\PSL$ of 
M\"{o}bius transformations of $\S$ is a three-dimensional subgroup of $\Diff$. We denote by 
$\PSL^{(n)} \subset \Diff^{(n)}$, $n\in \N \cup \{ \infty \}$, the corresponding $n$-cover so that $\PSI$ is the universal covering group of $\PSL$. We denote by  $\dot{g} \in \Diff$ the image of $g \in \Diff^{(\infty)}$ under the covering 
map. Since the latter restricts to the covering map of $\PSI$ onto $\PSL$ we have $\dot{g} \in \PSL$ for all 
$g \in \PSI$.   

Now let $\I$ denote the set of nonempty and non-dense open intervals of $\S$. For any $I\in \I$, $I'$ denotes 
the interior of $\S \setminus I$. We write $\Cci(\S):= \Cci (\S, \R)$ for the smooth real-valued functions on $\S$ and, given $I\in\I$, $\Cci(\S)_I$ for the subspace of those with support in $I$. The subgroup $\Diff_I\subset\Diff$ of diffeomorphisms localized in $I$ is defined 
as the stabilizer of $I'$ in $\Diff$ namely the subgroup of $\Diff$ whose elements are the diffeomorphisms acting trivially 
on $I'$. Then, for any $n \in \N \cup \{ \infty \}$, $\Diff^{(n)}_I$ denotes the connected component of the identity 
of the pre-image of $\Diff_I$ in $\Diff^{(n)}$ under the covering map. Then we write $\I^{(n)}$ for the set of intervals in $\S^{(n)}$ which map to an element in $\I$ under the covering map. Moreover, we often identify $\R$ with $\S\setminus\{-1\}$ by means of the Cayley transform, and we write $\I_\R$ (or $\bar{\I}_\R$) for the set of bounded open intervals (or bounded open intervals and open half-lines, respectively) in $\R$. After the above identification 
of $\R$ with $\S\setminus\{-1\}$ we have the inclusions $\I_\R \subset \bar{\I}_\R \subset \I$.
\begin{definition}\label{def:CFT-net}
 A \emph{graded-local conformal net $\A$ on $\S$} is a map $I \mapsto \A(I)$ from the set of intervals $\I$ to the set of von Neumann algebras acting on a common infinite-dimensional separable Hilbert space $\H$ which have the 
following properties:
\begin{itemize}
 \item[$(A)$] \emph{Isotony.} $\A(I_1)\subset \A(I_2)$ if $I_1,I_2\in\I$ and $I_1\subset I_2$.
\item[$(B)$] \emph{M\"{o}bius covariance.} There is a strongly continuous unitary representation $U$ of $\PSI$ such that
\[
 U(g)\A(I)U(g)^* = \A(\dot{g}I), \quad g\in \PSI , I\in\I .
\]
\item[$(C)$] \emph{Positive energy.} The conformal Hamiltonian $L_0$ (\ie the self-adjoint generator of the
restriction of $U$ to the lift to $\PSI$ of the one-parameter anti-clockwise rotation subgroup of $\PSL$) is positive.
 \item[$(D)$] \emph{Existence and uniqueness of the vacuum.} There exists a $U$-invariant vector $\Omega\in\H$ which is unique up to a phase and cyclic for $\bigvee_{I\in\I} \A(I)$.
 \item[$(E)$] \emph{Graded locality.} There exists a self-adjoint unitary $\Gamma$ (the grading unitary) on $\H$ satisfying 
 $\Gamma \A(I) \Gamma = \A(I)$ for all $I\in \I$ and $\Gamma \Omega =\Omega$ and such that
 \[
 \A(I')\subset Z \A(I)'Z^*,\quad I\in\I ,
\]
where
\[
 Z:=\frac{\unit - \rmi\Gamma}{1-\rmi}.
\]
\item[$(F)$] \emph{Diffeomorphism covariance.} There is a strongly continuous projective unitary representation of 
$\Diff^{(\infty)}$, denoted again by $U$, extending the unitary representation of $\PSI$ and such that
\[
 U(g)\A(I)U(g)^* = \A(\dot{g}I), \quad g\in \Diff^{(\infty)} , I\in\I,
\]
and
\[
 U(g) x U(g)^* = x,\quad x\in \A(I'), g\in \Diff^{(\infty)}_I, I\in\I.
\]
\end{itemize}
A \emph{local conformal net} is a graded-local conformal net with trivial grading $\Gamma=\unit$. The \emph{even subnet}\index{even subnet} of a graded-local conformal net $\A$ is defined as the fixed point subnet $\A^\gamma$, with grading gauge automorphism $\gamma=\Ad\Gamma$. It can be shown that the projective representation $U$ of $\Diff^{\infty}$ commutes with $\Gamma$, cf. 
\cite[Lemma 10]{CKL}. Accordingly the restriction of  $\A^\gamma$ to the even subspace $\H^\Gamma$ of $\H$ is a local conformal net with respect to the restriction to this subspace of the projective representation $U$ of $\Diff^{\infty}$. This local conformal net will again be denoted by $\A^\gamma$ while the corresponding representations of $\PSI$ together with its extension to $\Diff^{\infty}$ will be denoted by $U^\gamma$. 
\end{definition}

Note that graded-local conformal nets on $\S$ are called Fermi conformal nets in \cite{CKL}. A map $I \mapsto \A(I)$ 
satisfying all the properties in the above definition with the possible exception of  $(F)$ (diffeomorphism covariance) is called a \emph{graded-local M\"{o}bius covariant net on $\S$} (or M\"{o}bius covariant Fermi net on $\S$). Some results of this paper could be formulated in terms of M\"{o}bius covariant nets but for simplicity of the exposition we will always consider diffeomorphism covariant nets. Actually we will restrict ourselves mainly to the class of  graded-local nets admitting a supersymmetric extension of the diffeomorphism symmetry namely the class of superconformal nets on $\S$, defined below. When we want to permit both situations, we shall denote by $G$ either of the two groups $\PSL$ or $\Diff$. For the rest of this section, we write $\A$ for a generic graded-local conformal net over $\S$ acting on the Hilbert space $\H$ and with grading automorphism $\gamma$ as in the above definition.

Some of the consequences  \cite{CKL,CW05,FG,FJ96,GL} of the preceding definition are:

\begin{itemize}
\item[$(1)$] \emph{Reeh-Schlieder Property.} $\Omega$ is cyclic and separating for every $\A(I)$, $I\in\I$.
\item[$(2)$] \emph{Bisognano-Wichmann Property.} Let $I\in \I$ and let $\Delta_I$, $J_I$ be the modular operator and the modular conjugation of $\left(\A(I), \Omega \right)$.  Then we have 
\[
 U(\delta_I(-2\pi t)) = \Delta_I^{\rmi t}, \quad t\in \R .
\]

Moreover the unitary representation $U:\PSI \mapsto B(\H)$ extends to an (anti-)\linebreak unitary representation of  
$\PSI \rtimes \Z/2$  determined by 
\[
U(r_I)=Z J_I 
\]
and acting covariantly on $\A$.
Here $(\delta_I(t))_{t\in\R}$ is (the lift to $\PSI$ of) the one-parameter subgroup of dilations with respect to $I$ and $r_I$ the point reflection of the interval $I$ onto the complement $I'$. $r_I$ is identified with $1\in \Z/2$ and the corresponding automorphism $g\mapsto r_Igr_I$ of $\PSI$ is determined by the requirement that the image of $r_Igr_I$ in $\PSL$ under the covering map is equal to $r_I\dot{g}r_I$ for all 
$g\in \PSI$, cf. \cite{GL1,GL}. 

\item[$(3)$] \emph{Graded Haag Duality.} $\A(I')=Z\A(I)'Z^*$, for $I\in\I$.
\item[$(4)$] \emph{Outer regularity.} 
\[
 \A(I_0) = \bigcap_{I\in\I,I\supset \bar{I_0}} \A(I),\quad I_0\in\I.
\]
\item[$(5)$] \emph{Additivity.} If $I = \bigcup_{\alpha} I_\alpha$ with $I, I_\alpha \in \I$ a certain family, then 
$\A(I) = \bigvee_\alpha \A(I_\alpha)$.
 \item[$(6)$] \emph{Factoriality.} $\A(I)$ is a type $III_1$-factor, for $I\in\I$.
 \item[$(7)$] \emph{Irreducibility.} $\bigvee_{I\in\I} \A(I)= B(\H)$.
 \item[$(8)$] \emph{Vacuum Spin-Statistics theorem.} $\rme^{\rmi 2\pi L_0} =\Gamma$, in particular $\rme^{\rmi 2\pi L_0}=\unit$ for local nets, where $L_0$ is the infinitesimal generator from above corresponding to rotations. Hence the representation $U$ of $\PSI$ factors through a representation of $\PSL^{(2)}$ ($\PSL$ in the local case) and consequently its extension $\Diff^{(\infty)}$ factors through a projective representation of $\Diff^{(2)}$  ($\Diff$ in the local case). 
 \item[$(9)$] \emph{Uniqueness of Covariance.} For fixed $\Omega$, the strongly continuous projective representation $U$ of $\Diff^{(\infty)}$ making the net covariant is unique.
\end{itemize}

\begin{definition}\label{def:CFT-split}
A graded-local conformal net $\A$ satisfies the \emph{split property} if, given $I_1,I_2\in\I$ such that $\bar{I}_1\subset I_2$, there is a type I factor $F$ such that
\[
 \A(I_1) \subset F \subset \A(I_2).
\]
\end{definition}

\begin{definition}\label{def:CFT-reps1}
A \emph{(DHR) representation} of $\A$ is a family $\pi=(\pi_I)_{I\in\I}$ of $*$-representations
\[
\pi_I : \A(I) \ra B(\H_\pi),\quad I\in\I,
\]
on a common Hilbert space $\H_\pi$ such that $\pi_{I_2}|_{\A(I_1)} = \pi_{I_1}$ whenever $I_1\subset I_2$.
\begin{itemize}
\item[-] $\pi$ is called \emph{locally normal} if every $\pi_I$ is normal.
\item[-] $\pi$ is called \emph{$G$-covariant} if there exists a projective unitary representation $U_\pi$ of $G^{\infty}$ on $\H_\pi$ satisfying
\[
U_\pi(g) \pi_I(x) U_\pi(g)^* = \pi_{\dot{g}I}(U(g)xU(g)^*),\quad g\in G^{\infty}, x\in\A(I),I\in\I.
\]
Here $G= \PSL$ or $G=\Diff$.
\item[-] $\pi$ has \emph{positive energy}  if it is $G$-covariant and the infinitesimal generator of the lift of the rotation subgroup in $U_\pi(G^{(\infty)})$ is positive. 
\item[-] We say that the operator $T\in B(\H_{\pi_1},\H_{\pi_2})$ intertwines two representations $\pi_1,\pi_2$ , if for every $I\in\I$, it intertwines $\pi_{1,I}$ and $\pi_{2,I}$. Two representations $\pi_1,\pi_2$ are \emph{unitarily equivalent} if they admit a unitary intertwiner. 
The unitary equivalence class of a representation $\pi$ is denoted by $[\pi]$. $\pi$ is said to be irreducible if its self-intertwiners coincide with the scalar multiples of the identity operator. The direct sum $\pi_1 \oplus \pi_2$ is defined by  $(\pi_1 \oplus \pi_2)_I := \pi_{1,I} \oplus \pi_{2,I}$, $I\in I$. Accordingly a representation $\pi$ of the net $\A$ is irreducible if and only if it is not unitarily equivalent to a direct sum 
of non-simultaneously zero representations.
The unitary equivalence classes of irreducible locally normal representations are called the \emph{sectors} of $\A$. 
\item[-] $\pi$ is called \emph{localized} in a certain interval $I_0\in\I$ if $\H_\pi=\H$ and, for every $I\in\I$ with $I\subset I_0'$, we have $\pi_I=\iota_I$. $\pi$ is said to be  \emph{localizable} in $I\in \I$ if is unitarily equivalent to a representation which is localized in $I$. 
\end{itemize}
\end{definition}

The identity representation $\pi_0$ of $\A$ on $\H$ is called the \emph{vacuum representation}, and it is automatically locally normal, $G$-covariant, and localized in any given interval. Moreover, if $\Gamma$ is non-trivial we will denote the vacuum representation of 
$\A^\gamma$ on $\H^\Gamma \subset \H$ by $\pi^\gamma_0$.  

In the above definition, note that when $G=\PSL$, the projective representation $U_\pi$ comes from a unique unitary representation, which we will denote by the same symbol. Hence, the generators of one-parameter subgroups of $U_\pi$ are uniquely determined, in particular, this is the case for the generator $L_0^\pi$ of rotations (conformal Hamiltonian). When $G=\Diff$, these generators are actually obtained by the unitary representation (also denoted by $U_\pi$) corresponding to the restriction of $U_\pi$ to $\PSI$.

The space $\H_\pi$ is separable if $\pi$ is locally normal and cyclic, and $\pi$ is localizable in every $I_0 \in \I$ and hence locally normal if $\H_\pi$ is separable, see e.g. \cite[App.B]{KLM}. If $\pi$ is locally normal, then it is automatically $\PSL$-covariant \cite{CKL,DFK} and of positive energy \cite{Wei06}. Moreover, the representation $U_\pi$ can be uniquely chosen to be inner, \ie such that $U_\pi(g)\in \bigvee_{I\in\I} \pi_I(\A(I))$, for all $g\in \PSL^{(\infty)}$ (\cf also \cite{Koe}). In the following, unless stated otherwise, $U_\pi$ will always denote this unique inner representation.

In our index pairing below the even subnet (which is a local conformal net) will play a central role, so let us collect here a few general facts about local conformal nets. Let us denote by $\B$ a generic local conformal net with vacuum representation $\pi_0$. 
In this case, if $\pi$ is a representation of $\B$ localized in $I_0$, then by Haag duality we have $\pi_I(\B(I)) \subset \B(I)$, for all $I\in \I$ containing $I_0$,   i.e., $\pi_I$ is an endomorphism of $\B(I)$, and we say that $\pi$ is a \emph{localized endomorphism} or \emph{DHR endomorphism} (localized in $I_0$) of the net $\B$. If $\pi_{I_0}(\B(I_0)) = \B(I_0)$ then $\pi_{I}(\B(I)) = \B(I)$ 
for all $I\in \I$ containing $I_0$ and we say that $\pi$ is a \emph{localized automorphism} of the net $\B$. For an analogous statement in the graded-local case we refer to \cite[Prop.14]{CKL}. 

If $\pi$ is a representation of $\B$ and $I_1,I_2 \in \I$ are disjoint intervals with $I_1 \neq I_2'$ then it follows from locality that 
$\pi_{I_1}(\B(I_1)) \subset \pi_{I_2}(\B(I_2))'$. Hence, if $\pi$ is locally normal, we have an inclusion of type III  factors 
$\pi_{I}(\B(I)) \subset \pi_{I'}(\B(I'))'$ for every $I\in \I$ as a consequence of additivity.  Moreover, it follows from the covariance of $\pi$ that 
the  minimal index $[ \pi_{I'}(\B(I'))': \pi_I(\B(I))]$ is independent of $I$. Its square root is called the \emph{statistical dimension} of the locally normal representation $\pi$, is denoted by $d(\pi)$ and depends only on the unitary equivalence class $[\pi]$ of $\pi$. 
If ${\pi_1}, {\pi_2}$ are locally normal then $d(\pi_1 \oplus \pi_2) =d(\pi_1) +d(\pi_2)$. If the representation $\pi$ is localized in $I_0\in \I$ and if $I\in \I$ contains $I_0$ we have  $d(\pi)= [\B(I):\pi_I(\B(I))]^{\frac12}$, i.e., $d(\pi)^2$ is the index of the unital endomorphism $\pi_I \in \End (\B(I))$. Accordingly, recalling that normal endomorphisms of von Neumann factors are always injective, the localized endomorphism $\pi$ of the net $\B$ is a localized automorphism if and only if $d(\pi)=1$. In general we have $d(\pi) \geq 1$.

We start with an important global algebra associated to it, introduced in \cite[Sect.2]{Fred2} and \cite[(5.1.7)]{FRS2} 
(see also \cite[Sect.8]{GL1}):

\begin{definition}\label{def:CFT-univC}
The \emph{universal C*-algebra} $C^*(\B)$ is the C*-algebra such that
\begin{itemize}
\item[-] for every $I\in\I$, there are unital embeddings $\iota_I:\B(I)\ra C^*(\B)$, such that $\iota_{I_1|\B(I_2)} = \iota_{I_2}$ whenever $I_1\subset I_2$, and $\iota_I(\B(I))$ generate $C^*(\B)$ as a C*-algebra;
\item[-] for every representation $\pi$ of $\B$ on some Hilbert space $\H_\pi$, there is a unique $*$-representation $\hat{\pi}:C^*(\B)\ra B(\H_\pi)$ such that
\[
\pi_I = \hat{\pi}\circ \iota_I,\quad I\in \I.
\]
\end{itemize}
The universal C*-algebra can be shown to be unique up to isomorphism. Let $(\hat{\pi}_u,\H_u)$ be its \emph{universal representation}: the direct sum of all GNS representations $\hat{\pi}$ of $C^*(\B)$. Since it is faithful, $C^*(\B)$  can be identified with $\hat{\pi}_u(C^*(\B))$. We call the weak closure $W^*(\B) = \hat{\pi}_u(C^*(\B))''$ the \emph{universal von Neumann algebra} of $\B$, in other words, the enveloping von Neumann algebra of $C^*(\B)$ \cite[Ch.12]{Dix82}. Accordingly, every representation $\pi$ of $C^*(\B)$ extends to a unique normal representation of $W^*(\B)$ and similarly, every endomorphisms $\rho$ of $C^*(\B)$ extends to a unique normal endomorphism of 
$W^*(\B)$. Throughout this paper, when no confusion arises we will again denote by $\pi$ and $\rho$ these normal extensions. 
\end{definition}

\begin{remark}\label{rem:CFT-univC}
\begin{itemize}
\item[(1)]  If $\hat{\pi}$ is the representation of $C^*(\B)$ corresponding to the representation $\pi$ of $\B$ according to the above universal property, we shall freely say that $\hat{\pi}$ is locally normal, localized in some $I_0\in\I$, the vacuum representation, or covariant, respectively, if $\pi$ is such. The statistical dimension $d(\hat{\pi})$ of $\hat{\pi}$ is defined by  $d(\hat{\pi}) =d(\pi)$.
Moreover, we shall drop the $\hat{\cdot}$ sign when no confusion arises. Note that the terms intertwiner, 
unitary equivalence, irreducibility and direct sum for representations of the net $\B$ agree with the standard terminology for the corresponding representations  of the C*-algebra $C^*(\B)$. 
 
\item[(2)] $C^*(\B)$ inherits the local structure from the net $\B$. Thus, when no confusion arises, we may identify $\B(I)$ with its image
 $\iota_I(\B(I))$ in $C^*(\B)$.
\item[(3)] We say that an endomorphism $\rho$ of $C^*(\B)$ is \emph{localized in $I_0$} if it is the identity endomorphism in restriction to the subalgebra $\iota_{I}(\B(I))$ whenever $\overline{I} \subset I_0'$. It is well known (see Proposition \ref{prop:CFT-endoms} below) that there is a natural one-to-one correspondence between localized covariant endomorphisms of $ C^*(\B)$ and the localized locally normal representations of $C^*(\B)$, which in turn correspond to the localized representations of $\B$ by definition. 

We then write
\begin{equation}\label{eq:CFT-Delta}
\begin{aligned}
\Delta^0:=& \bigcup_{I\in\I} \Delta^0_I,\\
\Delta^0_I :=& \{ \PSL-\textrm{covariant endomorphisms of $C^*(\B)$ localized in } I\}.
\end{aligned}
\end{equation}

\item[(4)] As a consequence of the universal property of $C^*(\B)$ there is a unique representation $\alpha$ of $\Diff$ by automorphisms of $C^*(\B)$ implementing the covariance of $\B$, \ie such that $\alpha_g(\iota_I(x)) = \iota_{gI}(U(g)xU(g)^*)$. 
It follows from  \cite{DFK} that $\alpha_g$ is an inner automorphism of $C^*(\B)$, for all $g\in \Diff$, and we shall use the same notation for its lift to $\Diff^{(\infty)}$.
\end{itemize}
\end{remark}

Let $\mathcal U$ be an open neighborhood of the identity in $\PSI$.  A map $z$ from $\mathcal U$ into the set of unitary operators of 
$C^*(\B)$ is said to be  a local (unitary) $\alpha$-cocycle if  $z(gh) = z(g) \alpha_{g}(z(h))$ whenever $g, h, gh \in {\mathcal U}$. 
If $\mathcal U=\PSI$ then $z$ is said to be an $\alpha$-cocycle. From the fact that $\PSI$ is simply connected it follows that every local 
$\alpha$-cocycle defined on a connected open neighborhood of the identity in $\PSI$ has a unique extension to an $\alpha$-cocycle, cf. \cite[Sect.8]{GL1}.  

Now let $\pi$ be a locally normal representation of $C^*(\B)$ localized in an interval $I_0 \in \I$. Fix an interval $I\in \I$ containing the closure of $I_0$ and define the open neighborhood of the identity ${\mathcal U}_{I_0,I}$ to be the connected component of the identity of the open set
 $$\{g \in \PSI: \dot{g}\overline{I_0} \subset I \}.$$ 
Then, $U_\pi(g) U(g)^* \in \B(I')'= \B(I)$ for all $g \in {\mathcal U}_{I_0, I}$ and the map $z^I_\pi :  {\mathcal U}_{I_0,I} \to C^*(\B)$ defined by 
\begin{equation}\label{eq:CFT-cocycle-def}
z^I_\pi(g) := \iota_{I}(U_\pi(g) U(g)^*),\quad  g\in {\mathcal U}_{I_0,I},
\end{equation}
is a local unitary $\alpha$-cocycle and therefore extends to a unique $\alpha$-cocycle  which can be easily seen to be independent on the choice of $I$ and which will be denoted by $z_\pi$, cf. \cite[Sect.8]{GL1}. Note that if $I$ is any interval in $\I$ containing the closure of $I_0$ then we have $z_\pi(g) = \iota_{I}(U_\pi(g) U(g)^*)$ for all $g \in {\mathcal U}_{I_0, I}$. As a consequence, 
$\pi_0(z_\pi(g)) = U_\pi(g) U(g)^*$ for all $g \in {\mathcal U}_{I_0, I}$ and hence, since $ {\mathcal U}_{I_0, I}$ is a neighborhood of the identity,
$\pi_0(z_\pi(g)) = U_\pi(g) U(g)^*$ for all $g \in \PSI$.

\begin{proposition}\label{prop:CFT-endoms}
There is a natural one-to-one correspondence between $\PSL$-cova\-riant representations $\pi$ of $C^*(\B)$ localized in a given interval $I_0\in\I$ and $\PSL$-covariant endomorphisms $\rho$ of $C^*(\B)$ localized in $I_0$: given the representation $\pi$, $\rho$ is the unique endomorphism of $C^*(\B)$ localized in $I_0$ satisfying $\pi=\pi_0\circ\rho$ and the covariance condition 
\[
\Ad (z_\pi(g)^*)\circ \rho = \alpha_{g}\circ\rho\circ \alpha_{g}^{-1}, \quad g\in\PSI.
\]
If $\rho_1$ and $\rho_2$ are localized covariant endomorphisms of $C^*(\B)$ then $\pi_0\circ \rho_1$ is equivalent to $\pi_0 \circ \rho_2$ 
iff $\rho_1 = \Ad(u)\circ \rho_2$ for some unitary $u\in C^*(\B)$. A localized covariant endomorphism $\rho$ of $C^*(\B)$ is C*-algebra automorphism if and only if its statistical dimension $d(\rho):= d(\pi_0 \circ \rho)$ is equal to one.  
\end{proposition}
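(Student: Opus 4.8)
The plan is to prove the proposition in three stages: construct the endomorphism $\rho$ from a localized covariant representation $\pi$ and verify $\pi=\pi_0\circ\rho$ together with the covariance relation (existence); show that localization plus the covariance condition pin $\rho$ down (uniqueness, hence bijectivity); and then read off the equivalence and automorphism statements from standard sector theory together with Haag duality for the local net $\B$.

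For existence, let $\pi$ be $\PSL$-covariant and localized in $I_0$ with inner implementing representation $U_\pi$. As recalled just before the proposition, Haag duality gives $\pi_I\in\End(\B(I))$ for every $I\supseteq I_0$, and the compatibility $\pi_{I_2}|_{\B(I_1)}=\pi_{I_1}$ holds. I would assemble the unital $*$-homomorphisms $\rho_I\colon\B(I)\to C^*(\B)$ given by $\iota_I\circ\pi_I$ for $I\supseteq I_0$ and by $\iota_I$ for $I\subseteq I_0'$ (extending to the remaining intervals by transporting the charge with the cocycle $z_\pi$) into a compatible family, and then invoke the universal property of $C^*(\B)$ to produce a single $\rho\in\End(C^*(\B))$. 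By construction $\rho$ is localized in $I_0$ and $\pi_0\circ\rho=\pi$, and the covariance condition $\Ad(z_\pi(g)^*)\circ\rho=\alpha_g\circ\rho\circ\alpha_g^{-1}$ holds because $z_\pi$ is precisely the charge transporter built into the definition of $\rho$ on intervals moved by $\alpha_g$; concretely, applying $\pi_0$ and using $\pi_0(z_\pi(g))=U_\pi(g)U(g)^*$ together with $\alpha_g(\iota_I(x))=\iota_{gI}(U(g)xU(g)^*)$ reduces both sides to the covariance of $\pi$ on local algebras. The step I expect to be the main obstacle is exactly this realization of $\rho$ as an honest endomorphism of the \emph{full} universal algebra rather than of the quasi-local algebra: because $\pi_0$ is not faithful, the global (non-local) elements of $C^*(\B)$ are invisible in the vacuum, and handling them is where the construction of \cite[Sect.8]{GL1}, \cite{FRS2}, \cite{Fred2} is required.

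For uniqueness, suppose $\rho,\rho'$ are both localized in $I_0$, satisfy $\pi_0\circ\rho=\pi_0\circ\rho'=\pi$, and obey the covariance condition. Since $C^*(\B)$ is generated by the images $\iota_I(\B(I))$, it suffices to show $\rho=\rho'$ on each such subalgebra. For $I\subseteq I_0'$ both equal $\iota_I$ by localization, while for $I\supseteq I_0$ the covariance condition — evaluated on cocycle transports of local elements — is what forces $\rho(\iota_I(a))$ and $\rho'(\iota_I(a))$ to lie in $\iota_I(\B(I))$, where they are then determined by their common image $\pi_I(a)$ under the injective map $\pi_0|_{\iota_I(\B(I))}$. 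This is the essential point: localization and $\pi_0\circ\rho=\pi$ alone do not determine $\rho$ on the part of $C^*(\B)$ where $\pi_0$ degenerates, and covariance supplies exactly the missing control. Bijectivity then follows formally, since for a covariant localized $\rho$ the representation $\pi_0\circ\rho$ is covariant and localized and returns $\rho$ by uniqueness.

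For the equivalence statement, if $W$ is a unitary intertwiner of $\pi_0\circ\rho_1$ and $\pi_0\circ\rho_2$ on $\H$, then since both are localized in $I_0$ the operator $W$ commutes with $\B(I)$ for every $I\subseteq I_0'$, whence $W\in\B(I_0')'=\B(I_0)$ by additivity and Haag duality; thus $W=\pi_0(u)$ for a localized unitary $u=\iota_{I_0}(u_0)\in C^*(\B)$. Then $\Ad(u)\circ\rho_2$ and $\rho_1$ are localized endomorphisms with the same image under $\pi_0$ satisfying the same covariance condition, so $\rho_1=\Ad(u)\circ\rho_2$ by uniqueness; the converse is immediate from $\pi_0(\Ad(u)\circ\rho_2)=\Ad(\pi_0(u))\circ(\pi_0\circ\rho_2)$. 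Finally, if $d(\rho)=d(\pi)=1$ then each $\pi_I$ with $I\supseteq I_0$ is an automorphism of $\B(I)$ (as recalled above), and assembling the compatible inverses $\pi_I^{-1}$ yields $\rho^{-1}\in\End(C^*(\B))$, so $\rho\in\operatorname{Aut}(C^*(\B))$; conversely, if $\rho$ is an automorphism then $\rho^{-1}$ is again localized in $I_0$, and multiplicativity of the statistical dimension under composition together with $d\geq 1$ forces $1=d(\rho)\,d(\rho^{-1})$ to give $d(\rho)=1$.
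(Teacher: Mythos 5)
Your proposal is correct, and for the first two assertions (the correspondence itself and the equivalence statement) it essentially reconstructs the standard argument that the paper simply delegates to \cite[Sect.8]{GL1} and \cite[Sect.5.1]{FRS2}: the paper's own proof of these parts consists of the single formula $\rho|_{\B(I)}:=\Ad z_\pi(g)$ for $\overline{I}\subset\dot{g}I_0'$ together with the universal property of $C^*(\B)$, which is the same cocycle-transport construction you describe (note that this one formula already covers the intervals containing $I_0$, so the separate prescription $\iota_I\circ\pi_I$ on those intervals is not needed). The genuine divergence is in the last statement. For the implication $d(\rho)=1\Rightarrow\rho\in\operatorname{Aut}(C^*(\B))$ the paper argues via conjugate sectors: by \cite[Cor.2.10]{GL} and \cite[Thm.2.11]{GL} the representation $\pi_0\circ\rho$ is irreducible and admits a conjugate $\bar{\rho}$ with $d(\bar{\rho})=1$ such that $\pi_0\circ\bar{\rho}\rho$ is irreducible and contains, hence is equivalent to, $\pi_0$; therefore $\bar{\rho}\rho$ and $\rho\bar{\rho}$ are inner automorphisms and $\rho$ is invertible. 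You instead use that an index-one inclusion of type $III$ factors is an equality, so each $\pi_I$ with $I\supseteq I_0$ is onto, and you assemble the inverses $\pi_I^{-1}$ into an endomorphism of $C^*(\B)$. Your route is more elementary in that it avoids the existence of conjugates, but it leans harder on the correspondence itself: you must observe that the family $(\pi_I^{-1})$ is a locally normal (hence automatically $\PSL$-covariant, positive-energy) localized representation, pass to its associated localized covariant endomorphism $\rho'$, and then invoke the uniqueness half of the correspondence to identify $\rho\rho'$ and $\rho'\rho$ with $\id$ --- agreement on the local subalgebras $\iota_I(\B(I))$, $I\supseteq I_0$, does not by itself give equality on all of $C^*(\B)$, precisely because of the non-faithfulness of $\pi_0$ that you yourself flag in the existence step. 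With that one extra sentence your argument closes; the paper's conjugate argument buys the same conclusion without re-entering the universal algebra, at the cost of importing \cite[Thm.2.11]{GL}.
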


\begin{proof}
The proof is mainly given in \cite[Sect.8(p.541)]{GL1}, see also \cite[Sect.5.1]{FRS2}. $\rho$ is uniquely determined by the condition $\rho|_{\B(I)}:= \Ad z_\pi(g)$, 
$g\in\PSI$ , $\overline{I} \subset \dot{g}I_0'$ and the universal property of $C^*(\B)$. Recall that, because of \eqref{eq:CFT-cocycle-def}, $z_\pi(g)$ is uniquely determined by $\pi$ through the unique inner representation $U_\pi$ of $\PSI$ making $\pi$ covariant.

We give now a proof for the last statement. If the covariant endomorphism $\rho$ of $C^*(\B)$ localized in the interval  $I_0$ is an automorphism, then it is straightforward to see that $\rho^{-1}$ is again a covariant endomorphism localized in $I_0$. Therefore, 
$1=d(\id)=d(\rho \rho^{-1}) = d(\rho)d(\rho^{-1})$ so that $d(\rho)=1$. Vice versa, if $d(\rho)=1$, then it follows from 
\cite[Cor.2.10]{GL} that $\pi_0\circ \rho$ is irreducible. By \cite[Thm.2.11]{GL} there exists a covariant endomorphism $\bar{\rho}$ 
localized in $I_0$ (the conjugate endomorphism) such that $\pi_0\circ \bar{\rho}$ is irreducible,
$\pi_0\circ \bar{\rho}\rho \simeq \pi_0\circ \rho\bar{\rho}$ contains $\pi_0$ as a subrepresentation and $d(\bar{\rho}) =d(\rho) = 1$. 
Since $d(\bar{\rho}\rho)=d(\bar{\rho})d(\rho)=1$, also $\pi_0\circ \bar{\rho}\rho$ is irreducible and hence unitarily equivalent to $\pi_0$. Therefore $\bar{\rho}\rho$ is an inner automorphism of $C^*(\B)$. Similarly $\rho\bar{\rho}$ is an inner automorphism. Hence $\rho$ is an automorphism. 
\end{proof}

We shall say that two localized endomorphisms $\rho_1, \rho_2$ of $C^*(\B)$ are equivalent if \linebreak 
$[\pi_0 \circ \rho_1]=[\pi_0 \circ \rho_2]$ and write $[\rho]$ for the equivalence class of the localized covariant endomorphism $\rho$ of $C^*(\B)$. 

We shall need the following proposition, cf. \cite{Lo97,Lo01}.  

\begin{proposition}\label{prop:CFT-endoms2}
Given $I_0\in\I$, we define the unitary-valued map
\[
 z: (\rho,g) \in \Delta^0_{I_0}\times \PSI \mapsto z_{\pi_0 \circ \rho}(g).
\]
Then
\begin{equation}\label{twovarcocycle1}
z(\rho,gh) = z(\rho,g) \alpha_{g}(z(\rho,h)) \quad \rho \in\Delta^0_{I_0}, \; g,h\in \PSI.
\end{equation}
Moreover, if $\pi_0 \circ \rho\sigma(C^*(\B))' \cap \pi_0 \circ \rho(C^*(\B))''$ is a direct sum of finite dimensional 
algebras then 
\begin{equation}\label{twovarcocycle2}
z(\rho\sigma,g) =  \rho(z(\sigma,g)) z(\rho,g), \quad \rho,\sigma\in\Delta^0_{I_0}, \; g\in \PSI.
\end{equation}
In particular, \eqref{twovarcocycle2} always holds whenever $\rho$ and $\sigma$ have finite statistical dimension,
namely the restriction of $z$ to the endomorphisms with finite statistical dimension is a two-variable cocycle.
\end{proposition}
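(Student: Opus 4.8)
For \eqref{twovarcocycle1} there is nothing to do beyond unwinding definitions: for each fixed $\rho\in\Delta^0_{I_0}$ the map $z(\rho,\cdot)=z_{\pi_0\circ\rho}$ is, by construction (see \eqref{eq:CFT-cocycle-def} and the surrounding paragraph), the unique extension of a local $\alpha$-cocycle to a global $\alpha$-cocycle on $\PSI$, so $z(\rho,gh)=z(\rho,g)\,\alpha_g(z(\rho,h))$ holds verbatim. The content of the proposition is therefore \eqref{twovarcocycle2}, which I would prove in four moves.

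First, a reduction. I would check that the right-hand side $v(g):=\rho(z(\sigma,g))\,z(\rho,g)$ is itself an $\alpha$-cocycle in $g$: a direct computation, using the covariance relation $\Ad(z(\rho,g)^*)\circ\rho=\alpha_g\circ\rho\circ\alpha_g^{-1}$ of Proposition \ref{prop:CFT-endoms} to commute $\alpha_g$ through $\rho$, yields $v(gh)=v(g)\,\alpha_g(v(h))$. Since both $z(\rho\sigma,\cdot)$ and $v$ are global $\alpha$-cocycles and a local $\alpha$-cocycle extends uniquely ($\PSI$ being simply connected), it suffices to prove $z(\rho\sigma,g)=v(g)$ for $g$ in the connected neighbourhood $\mathcal{U}_{I_0,I}$ of the identity, with $I\supset\overline{I_0}$ fixed. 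There $z(\rho,g),z(\sigma,g),z(\rho\sigma,g)\in\iota_I(\B(I))$, and as $\rho$ is localized in $I_0\subset I$ it restricts to an endomorphism of $\B(I)$, so $v(g)\in\iota_I(\B(I))$ as well. Because $\pi_0$ is faithful on $\iota_I(\B(I))$, it suffices to prove the equality after applying $\pi_0$; using $\pi_0(z(\pi,g))=U_\pi(g)U(g)^*$ and writing $U_\rho:=U_{\pi_0\circ\rho}$, $U_\sigma:=U_{\pi_0\circ\sigma}$, $U_{\rho\sigma}:=U_{\pi_0\circ\rho\sigma}$ for the inner covariance unitaries, this reduces to the operator identity
\[
U_{\rho\sigma}(g)=\rho\big(U_\sigma(g)U(g)^*\big)\,U_\rho(g)=:W(g).
\]

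Second, I would note that both $U_{\rho\sigma}$ and $W$ are genuine unitary representations of $\PSI$: indeed $U_{\rho\sigma}(g)=\pi_0(z(\rho\sigma,g))U(g)$ and $W(g)=\pi_0(v(g))U(g)$, and for any $\alpha$-cocycle $z$ the assignment $g\mapsto\pi_0(z(g))U(g)$ is multiplicative, since $\pi_0\circ\alpha_g=\Ad U(g)\circ\pi_0$. Both implement the same action $\pi_0(\rho\sigma(x))\mapsto\pi_0(\rho\sigma(\alpha_g(x)))$ (from the covariance relation for $z(\rho\sigma,\cdot)$ and the one just checked for $v$), so $c(g):=U_{\rho\sigma}(g)^*W(g)$ lies in $\pi_0\circ\rho\sigma(C^*(\B))'$. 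Moreover $W(g)=\pi_0(\rho(z(\sigma,g)))\,U_\rho(g)\in\pi_0\circ\rho(C^*(\B))''$ while $U_{\rho\sigma}(g)\in\pi_0\circ\rho\sigma(C^*(\B))''\subset\pi_0\circ\rho(C^*(\B))''$, so $c(g)\in\M:=\pi_0\circ\rho\sigma(C^*(\B))'\cap\pi_0\circ\rho(C^*(\B))''$ — precisely the algebra of the hypothesis.

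Third, the crux: I expect $c\equiv1$, and this is where finite-dimensionality is indispensable. The key observation is that $c$ is not merely a twisted cocycle but a genuine homomorphism: since $c(g)\in\M\subset\pi_0\circ\rho\sigma(C^*(\B))'$ commutes with every $U_{\rho\sigma}(h)\in\pi_0\circ\rho\sigma(C^*(\B))''$, the relation $c(gh)=\Ad(U_{\rho\sigma}(h)^*)(c(g))\,c(h)$ collapses to $c(gh)=c(g)c(h)$. Thus $c$ is a strongly continuous homomorphism of $\PSI$ into the unitary group of $\M$. Writing $\M=\bigoplus_k\M_k$ with each $\M_k$ finite-dimensional (exactly the hypothesis), the components $c_k$ are continuous homomorphisms into the compact unitary groups of the $\M_k$; as $\PSI$ is the universal covering group of the simple noncompact group $\PSL$, its Lie algebra $\mathfrak{sl}(2,\R)$ admits no nonzero homomorphism into a compact Lie algebra, so every finite-dimensional unitary representation of $\PSI$ is trivial, whence $c_k\equiv1$ and $c\equiv1$. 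This gives $U_{\rho\sigma}(g)=W(g)$ and, by the first move, \eqref{twovarcocycle2}. Without finite-dimensionality, $c$ could be any of the abundant nontrivial infinite-dimensional unitary representations of $\PSI$ and the identity would genuinely fail — this is the main obstacle. Finally, the last assertion is immediate: finite statistical dimensions $d(\rho),d(\sigma)<\infty$ make $\pi_0\circ\rho\sigma(C^*(\B))''\subset\pi_0\circ\rho(C^*(\B))''$ a finite-index inclusion, so its relative commutant $\M$ is automatically finite-dimensional and \eqref{twovarcocycle2} applies.
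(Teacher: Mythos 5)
Your proof is correct and follows essentially the same route as the paper's: both reduce \eqref{twovarcocycle2} to the coincidence of two $\alpha$-cocycles near the identity, form the defect unitary $c(g)=U_{\pi_0\circ\rho\sigma}(g)^*W(g)$ (the paper's $V(g)$) valued in $\M=\pi_0\circ\rho\sigma(C^*(\B))'\cap\pi_0\circ\rho(C^*(\B))''$, and kill it using the absence of nontrivial finite-dimensional unitary representations of $\PSI$. Your explicit verification that $c$ is a genuine homomorphism (via innerness of $U_{\pi_0\circ\rho\sigma}$) supplies a detail the paper leaves implicit.
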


\begin{proof}
The first of the two identities is true by definition. Let us therefore prove the second one. 

Note that the map 
$g\mapsto \rho(z(\sigma,g)) z(\rho,g)$ is a unitary $\alpha$-cocycle so that it is enough to show that it coincides with 
$g\mapsto z(\rho\sigma,g)$ in a neighborhood of the identity in $\PSI$. 
Let us consider the unitary representation $\tilde{U}$
of $\PSI$ on the vacuum Hilbert space $\H$ of $\B$ defined by 
$\tilde{U}(g):= \pi_0\left(\rho(z(\sigma,g)) z(\rho,g)\right) U(g)$, $g\in \PSI.$
It satisfies $\tilde{U}(g)\pi_0\circ\rho\sigma(x)\tilde{U}(g)^*=\pi_0\circ\rho\sigma(\alpha_g(x))$ for all $x\in C^*(\B)$ and all $g \in \PSI$. 
Accordingly, $V(g) :=U_{\pi_0\circ\rho\sigma}(g)\tilde{U}(g)^* \in \pi_0\circ\rho\sigma(C^*(\B))'$ for all $g\in \PSI$. It follows that the map $g\to V(g)$ defines a strongly continuous unitary representation $V$ of $\PSI$ on $\H$ with values in $\pi_0\circ\rho\sigma(C^*(\B))'$. 

On the other hand 
$\tilde{U}(g)= \pi_0(\rho(z(\sigma,g))) U_{\pi_0\circ \rho}(g) \in \pi_0\circ \rho(C^*(\B))''$ so that 
$V(g) \in \pi_0\circ\rho\sigma(C^*(\B))' \cap \pi_0\circ \rho(C^*(\B))''$ for all $g\in \PSI$. Since by assumption 
$\pi_0\circ\rho\sigma(C^*(\B))' \cap \pi_0\circ \rho(C^*(\B))''$ is a direct sum of finite dimensional algebras, $V$ is a direct sum of finite dimensional unitary representations, and hence it must be trivial because $\PSI$ has no nontrivial finite dimensional unitary representations. 
Accordingly $\pi_0\left(\rho(z(\sigma,g)) z(\rho,g)\right)= \pi_0(z(\rho\sigma,g))$ for all $g\in \PSI$. Hence, for any $I\in \I$ containing the closure of $I_0$, we have $\rho(z(\sigma,g)) z(\rho,g)= z(\rho\sigma,g)$ for all $g\in {\mathcal U}_{I_0,I}$, because, if $g\in{\mathcal U}_{I_0,I}$ then $\rho(z(\sigma,g)), z(\rho,g), z(\rho\sigma,g) \in \iota_I(\B(I))$ and the restriction of $\pi_0$ to the latter subalgebra is faithful. Therefore, the two $\alpha$-cocycles coincide in a neighborhood of the identity and the conclusion follows. 
\end{proof}

Clearly, the localized covariant endomorphism corresponding to the vacuum representation $\pi_0$ of $\B$ is the identity automorphism 
$\id : C^*(\B) \to C^*(\B)$. The unitary $z_\pi(g)$ turns out to transport the ``charges" related to $\pi$ from $I_0$ to $\dot{g} I_0$, wherefore it is also called a \emph{charge transporter} of the representation $\pi$ between the localization regions $I_0$ and $\dot{g}I_0$.

In every locally normal irreducible representation $\pi$ of $\B$ the conformal Hamiltonian $L_0^\pi$ is selfadjoint with a lowest eigenvalue $\lw(\pi)=h_\pi$,  called the \emph{lowest energy} of $\pi$, and discrete spectrum equal to $\lw(\pi)+\NN$ (except for the vacuum representation case where possibly the spectrum is strictly contained in  $\NN$). 

Let us now return to our graded-local conformal net $\A$. The concept of representations begun in Definition \ref{def:CFT-reps1} becomes slightly more involved here:

\begin{definition}[{\cite[Sect.4]{CKL}}]
\begin{itemize}
\item[$(1)$] A \emph{$G$-covariant soliton} of $\A$ is a family $\pi=(\pi_I)_{I\in\bar{\I}_\R}$ of normal representations of $\A$ restricted to $\bar{\I}_\R$ on a common Hilbert space $\H_\pi$ with a projective unitary representation $U_\pi:G^{(\infty)}\ra B(\H_\pi)$ such that:
\begin{itemize}
\item[-] $\pi_{I_2}|_{\A(I_1)}=\pi_{I_1}$, whenever $I_1\subset I_2$, $I_1,I_2\in\bar{\I}_\R$;
\item[-] for every $I\in\I_\R$, 
\[
U_\pi(g) \pi_I(x) U_\pi(g)^*=\pi_{\dot{g}I}(U(g)xU(g)^*),\quad g\in \mathcal{V}_I , x\in\A(I), 
\]
where $\mathcal{V}_I$ is the connected component of the identity in $G^{(\infty)}$ of the open set \linebreak
$\{g \in G^{(\infty)}: \dot{g}I \in \I_\R \}$. 
\end{itemize}

If $U_\pi$ is a positive energy representation, namely the selfadjoint generator $L^\pi_0$
corresponding to the one parameter group of rotations has nonnegative spectrum, we say that $\pi$ has positive energy.
If $\pi$ is a $G$-covariant soliton and the family $ \pi = (\pi_I)_{I\in\bar{\I}_\R}$ can be extended 
to $\I$ giving a covariant DHR representation of $\A$ (with the same $U_\pi$) we say that $\pi$ is a \emph{DHR representation} of $\A$.

\item[$(2)$] A \emph{$G$-covariant general soliton} $\A$ is a $G$-covariant soliton such that the restriction of $\pi$ from $\A$ to the even subnet $\A^\gamma$ is a DHR representation.  In case $G=\Diff$, we shall simply say \emph{general soliton}.
\item[$(3)$] A $G$-covariant general soliton $\pi$ of $\A$ is called \emph{graded} if there exists a selfadjoint unitary $\Gamma_\pi\in B(\H)$ commuting with $U_\pi(g)$, for all $g\in G^{(\infty)}$, and such that
\[
\Gamma_\pi \pi_I(x) \Gamma_\pi = \pi_I(\gamma(x)) , \quad x\in\A(I),I\in\bar{\I}_\R.
\]
\item[$(4)$] A $G$-covariant graded general soliton $\pi$ of $\A$ is called \emph{supersymmetric} if $L_0^\pi-\lambda \unit$ admits an odd square-root (the {\it supercharge}) for some $\lambda \in \R$.
\end{itemize}
\end{definition}

\begin{remark}\label{rem:CFT-IR}
It can be shown (using a straight-forward reasoning based on covariance relations) that a family $(\pi_I)_{I\in\I_\R}$ of normal representations of $\A$ which is covariant with respect to a given projective unitary representation of $G^{(\infty)}$ extends automatically from $\I_\R$ to $\bar{\I}_\R$, thus defines a $G$-covariant soliton. We shall make use of this (simplifying) fact when considering the super-current algebra models and the super-Virasoro net in the final section.
\end{remark}

As in the case of DHR representations (see the comments after Definition \ref{def:CFT-reps1}) it can be shown that in various cases, as a consequence of the results in \cite{Wei06}, the positive energy condition is automatic for $G$-covariant general solitons, see \cite[Prop.12\&Prop.21]{CKL}. In particular an irreducible $G$-covariant general soliton is always of positive energy. 

\begin{example}[\textbf{Super-Virasoro net}]\label{ex:superVir}
The fundamental example of a graded-local conformal net is the \emph{super-Virasoro net}. It has been introduced in \cite[Sect.6]{CKL} and studied in \cite{CHKL} with the aim of constructing spectral triples. We sketch here the main ideas.

The \emph{Neveu-Schwarz super-Virasoro algebra} is the $\Z/2$-graded Lie algebra generated by even $L_n$, $n\in\N$, odd $G_r$, $r\in\frac12+\Z$, and a central even element $\hat{c}$, together with the following (anti-) commutation relations
\begin{equation}
\begin{gathered}\label{eq:superVir}
    [L_m , L_n] = (m-n)L_{m+n} + \frac{\hat{c}}{12}(m^3 - m)\de_{m+n, 0},\\
    [L_m, G_r] = \Big(\frac{m}{2} - r\Big)G_{m+r},\\
    [G_r, G_s]_+ = 2L_{r+s} + \frac{\hat{c}}{3}\Big(r^2 - \frac14\Big)\de_{r+s,0}.
    \end{gathered}
\end{equation}
The \emph{Ramond super-Virasoro algebra} is defined analogously but with $r\in\Z$. Both are equipped with an involution: $L_n\mapsto L_{-n}$, $G_{r}\mapsto G_{-r}$ and $\hat{c}\mapsto \hat{c}$.

In an irreducible (nonzero) unitary positive energy representation $\pi$ of these algebras, the central element $\hat{c}$ is represented by a positive multiple $c\unit$ of $\unit$; $\pi$ is completely determined by this number $c$ (the {\it central charge}) together with the lowest eigenvalue $h_\pi= \lw(\pi)\in\R_+$ of $L_0^\pi$. More generally, if $\pi$ is a possibly reducible positive energy unitary representation such that $\hat{c}$ is represented by a positive multiple $c\unit$ we say that $\pi$ has central charge $c$.
The vacuum representation with central charge $c$ of the Neveu-Schwarz algebra is the graded irreducible representation $\pi$ with central charge $c$ where the lowest eigenvalue of $L_0^\pi$ is $0$. The Ramond vacuum representation with central charge $c$ is the graded irreducible representation of the Ramond algebra having central charge $c$ and lowest energy $h_\pi=c/24$. For representations of the Ramond algebra we automatically have a (odd) square-root of the conformal Hamiltonian up to an additive constant $L_0^\pi- \frac{c}{24} \unit$, namely $G_0^\pi$. We shall consider this point in more generality in Proposition \ref{prop:CFT-supersymmetric}.

Consider now the Neveu-Schwarz algebra in the vacuum representation with certain central charge value $c$, and drop the symbol $\pi$ for simplicity. For smooth and localized functions $f\in\Cci(\S)_I$ with $I\in\I$ (or $ I \in\I_\R$), the Fourier coefficients $f_n$, $n\in\Z$, (or $f_r:=\frac{1}{2\pi}\int_{-\pi}^\pi f(\rme^{\rmi t})\rme^{-\rmi r t}\rmd t$, $r\in\frac12+\Z$, respectively) are rapidly decreasing and, owing to so-called energy bounds (analytical properties of the operators $L_n,G_r$ on $\H$), the formal sums
\[
 \sum_{n\in\Z} f_n L_n,\quad \sum_{r\in\frac12+\Z} f_r G_r
\]
are densely defined closable essentially selfadjoint operators on $\Cci(L_0)$, which forms an invariant core for them; we denote their selfadjoint closures, the so-called smeared fields, by $L(f)$ and $G(f)$, respectively. Similarly one can define smeared fields 
$L^\pi(f)$ and $G^\pi(f)$ for any unitary positive energy representation of the Neveu-Schwarz or Ramond super-Virasoro algebra having a given central charge $c$, \cf \cite{CHKL}.
Since
\[
[L(f),L(g)]=[L(f),G(g)]=[G(f),G(g)]_+ = 0, \quad \supp(f) \cap \sup (g) =\emptyset,
\]
one can show (\cf \cite[Sect. 6]{CKL}) that the family of von Neumann algebras
\[
 \A_{\SVir,c} (I) := \{ \rme^{\rmi L(f)}, \rme^{\rmi G(f)}:\; f\in\Cci(\S)_I\}'', \quad I\in\I_\R,
\]
extends (by covariance) to a unique graded-local conformal net $(\A_{\SVir,c}(I))_{I\in\I}$ over $\S$. 
\end{example}

Based on the fact that every local conformal net contains the Virasoro net as a minimal conformal subnet \cite[Prop.3.5]{KL04} and \cite{Carpi}[Rem. 3.8] (in fact irreducible by \cite[Prop.3.7]{Carpi}), one can make the following definition: 

\begin{definition}\label{def:CFT-SCFTnet}
The net $\A$ with central charge $c$ is \emph{superconformal} if it contains $\A_{\SVir,c}$ as a $\PSL$-covariant subnet and the projective representation $U$ of $\Diff^{(\infty)}$ making $\A$ diffeomorphism covariant satisfies
\[
U(\Diff^{(\infty)}(I)) \subset \A_{\SVirc}(I) \subset \A(I), \quad I\in\I.
\]
\end{definition}

If $\A$ is superconformal, then it can be shown, using \cite[Prop.3.7]{Carpi}, that the inclusion $\A_{\SVirc}(I) \subset \A(I)$ is irreducible, for every $I\in\I$, in other words, $\A_{\SVirc} \subset \A$ is an irreducible inclusion of conformal nets. Conversely, if $\A$ contains $\A_{\SVirc}$ as a $\PSL$-covariant irreducible subnet and $c<3/2$, then $\A$ is superconformal, \cf \cite[Sect.7]{CKL}.

Let us focus a little bit more on the several kinds of solitons, their properties, and the meaning of supersymmetry.

\begin{proposition}[{\cite[Sect.4.3]{CKL}}]\label{prop:CKL22}
Let $\pi$ be an irreducible general soliton of $\A$. Then the following three conditions are equivalent:
\begin{itemize}
 \item[-] $\pi$ is graded,
 \item[-] $\pi |_{\A ^\gamma}$ is reducible,
 \item[-] $\pi |_{\A ^\gamma} \simeq  \pi_+ \oplus \pi_+ \circ \hat{\gamma} =: \pi_+\oplus \pi_-$,
\end{itemize}
with $\pi_+$ and $\pi_-$ inequivalent irreducible localized DHR representation of $\A^\gamma$ and $\hat{\gamma}$ a localized DHR automorphism of $\A^\gamma$ dual to the grading.
\end{proposition}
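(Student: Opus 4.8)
The three conditions are a net-theoretic incarnation of Clifford theory for the index-two inclusion $\A^\gamma\subset\A$, with the grading $\gamma$ in the role of the nontrivial character of $\Z/2$ and the composition $\pi\circ\gamma$ in the role of tensoring by that character. Accordingly I would organise the proof around the relative commutant $\pi(\A^\gamma)'$ and prove $(a)\Rightarrow(c)\Rightarrow(b)\Rightarrow(a)$, where $(a)$, $(b)$, $(c)$ are the three listed conditions in order. The implication $(c)\Rightarrow(b)$ is immediate: $\pi_+$ is irreducible, hence nonzero, so $\pi_+\oplus\pi_+\circ\hat\gamma$ is a genuine direct sum and $\pi|_{\A^\gamma}$ is reducible. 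The decisive input for the two remaining implications is a structural fact I would isolate as a lemma: on each $I\in\I$ where the grading is nontrivial the inclusion $\A^\gamma(I)\subset\A(I)$ has index two and is generated by $\A^\gamma(I)$ together with a single odd unitary $v\in\A(I)$ (i.e. $\Gamma v=-v\Gamma$), and since the product of two odd elements is even, the odd unitaries for different intervals differ by even elements, so $\pi(\A)''$ is generated by $\pi(\A^\gamma)''$ and a single $\pi(v)$. Granting this, $\Ad\pi(v)$ maps $\pi(\A^\gamma)'$ onto itself (because $vxv^*\in\A^\gamma$ for even $x$, this being the localised dual automorphism $\hat\gamma:=\Ad v$, with $\hat\gamma^2\simeq\id$ as $v^2$ is even), and its fixed-point algebra is $\pi(\A^\gamma)'\cap\{\pi(v)\}'=\pi(\A)'=\C\unit$ by irreducibility of $\pi$. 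Since a von Neumann algebra carrying an involutive automorphism with one-dimensional fixed-point algebra is easily seen to be at most two-dimensional, $\pi(\A^\gamma)'$ is abelian of dimension $\le 2$.

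For $(a)\Rightarrow(c)$ I would start from the implementer $\Gamma_\pi$ of $\gamma$, which commutes with $\pi(\A^\gamma)$ (as $\gamma$ fixes $\A^\gamma$ pointwise) and anticommutes with $\pi(v)$ (as $v$ is odd); thus $\Gamma_\pi\in\pi(\A^\gamma)'$ is a nonscalar self-adjoint unitary, forcing $\pi(\A^\gamma)'=\C\unit\oplus\C\Gamma_\pi$ exactly. Decomposing $\H_\pi=\H_+\oplus\H_-$ into the $\pm1$-eigenspaces of $\Gamma_\pi$ gives $\pi|_{\A^\gamma}=\pi_+\oplus\pi_-$ with $\pi_\pm$ irreducible and inequivalent, these being the two minimal projections of the abelian commutant; they are DHR representations of $\A^\gamma$ as restrictions of the DHR representation $\pi|_{\A^\gamma}$ to invariant subspaces. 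Finally $\pi(v)$ maps $\H_+$ unitarily onto $\H_-$, and for $x\in\A^\gamma$ the relation $\pi(v)\pi(x)\pi(v)^*=\pi(\hat\gamma(x))$ yields $\pi_+\simeq\pi_-\circ\hat\gamma$; composing on the right with $\hat\gamma$ and using that $\hat\gamma^2$ is inner gives $\pi_-\simeq\pi_+\circ\hat\gamma$, which is exactly $(c)$.

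For $(b)\Rightarrow(a)$ I would run the commutant computation in reverse. Reducibility means $\pi(\A^\gamma)'\neq\C\unit$, so by the lemma $\pi(\A^\gamma)'=\C\unit\oplus\C u$ for a self-adjoint unitary $u$ lying in the $(-1)$-eigenspace of $\Ad\pi(v)$, i.e. $u\pi(v)u^*=-\pi(v)$. For even $x$ one has $u\pi(x)u^*=\pi(x)=\pi(\gamma(x))$, while for the odd generator $u\pi(v)u^*=-\pi(v)=\pi(\gamma(v))$; since the odd part is generated over $\A^\gamma$ by $v$, the unitary $u$ implements $\gamma$ in $\pi$, so we may set $\Gamma_\pi:=u$. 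It remains to check the covariance requirement $\Gamma_\pi U_\pi(g)=U_\pi(g)\Gamma_\pi$: because $U(g)$ commutes with $\Gamma$ in the vacuum (Definition \ref{def:CFT-net}), the automorphisms $\alpha_g$ commute with $\gamma$, and a short covariance computation shows that $U_\pi(g)\Gamma_\pi U_\pi(g)^*$ again implements $\gamma$; by uniqueness of $\Gamma_\pi$ up to sign in the two-dimensional commutant it equals $\pm\Gamma_\pi$, and the sign is $+1$ by continuity and connectedness of $G^{(\infty)}$. Hence $\pi$ is graded.

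The step I expect to be the main obstacle is the structural lemma on $\A^\gamma(I)\subset\A(I)$ and the existence of the normalising odd unitary $v$ — in particular the claim that commuting with $\pi(\A^\gamma)$ and with a single $\pi(v)$ forces commuting with all of $\pi(\A)$, which is what collapses the a priori global commutant to a two-dimensional algebra. This is precisely where graded locality (axiom $(E)$) and the Bose--Fermi structure of Fermi nets genuinely enter, and it is cleanest to quote the corresponding results of \cite{CKL} rather than reprove them; once $v$ and $\hat\gamma$ are available, the Clifford-type bookkeeping above is routine.
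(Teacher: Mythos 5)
The paper itself gives no proof of this proposition: it is imported verbatim from \cite[Sect.4.3]{CKL}, so there is no in-paper argument to compare against, and your proposal is in substance the proof given in that reference. The approach is correct: the existence of a self-adjoint odd unitary $v$ normalising $\A^\gamma(I)$, the resulting bound $\dim \pi(\A^\gamma)' \le 2$ obtained from the involution $\Ad\pi(v)$ whose fixed-point algebra is $\pi(\A)'=\C\unit$, and the identification $\hat{\gamma}=\Ad v$ with $\pi(v)$ intertwining $\pi_+$ and $\pi_-\circ\hat{\gamma}$ are exactly the ingredients used there, and you rightly isolate the structural lemma on the index-two inclusion as the one piece to quote rather than reprove. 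The only bookkeeping you elide is that a general soliton is defined only on $\bar{\I}_\R$ while its restriction to $\A^\gamma$ is a DHR representation on all of $\I$, so the commutant taken over $\bar{\I}_\R$ and the DHR commutant must be compared: the latter is contained in the former, is nontrivial precisely when the former is (both then equal to $\C\unit\oplus\C u$), and covariance together with $[\Gamma_\pi,U_\pi(g)]=0$ extends the grading relation to the intervals containing the point at infinity.
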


It can be shown \cite[Cor.23 (proof)]{CKL} that for irreducible graded $\pi$ and under the assumption of finite statistical dimension on $\pi_+$, $\rme^{\rmi 4\pi L_0^\pi}= \rme^{\rmi 4\pi h_\pi}\unit$ is a scalar and we have in fact the two possibilities
\[
 \rme^{\rmi 2\pi L_0^\pi} =  \rme^{\rmi 2\pi L_0^{\pi_+}}\oplus \pm \rme^{\rmi 2\pi L_0^{\pi_+}}
= \rme^{\rmi 2\pi h_\pi}\unit \oplus \pm \rme^{\rmi 2\pi h_\pi}\unit,
\]
while in the irreducible ungraded case $\rme^{\rmi 2\pi L_0^\pi}$ is always a scalar because it commutes with $\bigvee_{I\in\I} \pi_I(\A^\gamma(I))=B(\H_\pi)$. Here ``$+$" will correspond to $(R)$ in the following theorem, ``$-$" to $(NS)$.  In the following, we shall not restrict ourselves to finite index, but we will always assume that $\rme^{\rmi 4\pi L_0^\pi}$ is a scalar if $\pi$ is irreducible, although this assumption might turn out to be unnecessary. As we shall see, it is easy to verify this for the models in Section \ref{sec:loop}.

\begin{theorem}[{\cite[Sect.4.3]{CKL}}]\label{th:CFT-RS-N}
Let $\pi$ be an irreducible general soliton of $\A$  such that $\rme^{\rmi 4\pi L_0^\pi}$ is a scalar, and denote $\pi|_{\A^\gamma}=:\pi_+ \oplus \pi_+ \circ \hat{\gamma}$ or $\pi|_{\A^\gamma}=:\pi_+$ with an irreducible representation $\pi_+$ of $\A^\gamma$ (for graded or ungraded $\pi$, respectively). Then $\pi$ is of either of the subsequent two types:
\begin{itemize}
\item[$(NS)$] $\pi$ is a DHR representation of $\A$; equivalently,
$\rme^{\rmi 2\pi (L_0^\pi-h_\pi \unit)}=\Gamma_\pi$ implements the grading.
\item[$(R)$]   $\pi$ is not a representation but only a general soliton of $\A$; equivalently $\rme^{\rmi 2\pi (L_0^\pi-h_\pi \unit)}= \unit$ and hence does not implement the grading.
\end{itemize}
In case $(NS)$, $\pi$ is called a \emph{Neveu-Schwarz representation} of $\A$, and in case $(R)$, a \emph{Ramond representation}, the latter, however, being actually only a general soliton and not a proper (DHR) representation of $\A$. A direct sum of irreducible Neveu-Schwarz (Ramond) representations is again called a Neveu-Schwarz (Ramond) representation. 
\end{theorem}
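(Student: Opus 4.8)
The plan is to isolate one geometric mechanism --- the behaviour of $\pi$ under the full $2\pi$-rotation at the cut point $-1\in\S$ --- and to show that it simultaneously controls the extendability of $\pi$ to a DHR representation and the value of $\rme^{\rmi 2\pi(L_0^\pi-h_\pi\unit)}$.

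First I would record the decomposition of Proposition~\ref{prop:CKL22}: in the graded case $\pi|_{\A^\gamma}\simeq\pi_+\oplus\pi_+\circ\hat\gamma$ with inequivalent irreducible summands, and $\Gamma_\pi$ the self-adjoint unitary implementing $\gamma$ (unique up to sign, since $\pi$ is irreducible), while in the ungraded case $\pi|_{\A^\gamma}=\pi_+$ is irreducible. The hypothesis that $\rme^{\rmi 4\pi L_0^\pi}$ is a scalar, together with the computation recalled just before the theorem from \cite{CKL}, gives that $\rme^{\rmi 2\pi(L_0^\pi-h_\pi\unit)}$ is either $\Gamma_\pi$ or $\unit$ in the graded case (the summands $\pi_+$ and $\pi_+\circ\hat\gamma$ contributing $\unit$ and $\pm\unit$), and is the scalar $\pm\unit$ in the ungraded case (it commutes with $\bigvee_I\pi_I(\A^\gamma(I))=B(\H_\pi)$ and squares to $\unit$). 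This already shows that only the two right-hand alternatives in $(NS)$ and $(R)$ can occur, so the real task is to match each of them with the DHR/soliton alternative.

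The core step is the equivalence: \emph{$\pi$ extends to a DHR representation of $\A$ if and only if $\rme^{\rmi 2\pi(L_0^\pi-h_\pi\unit)}$ implements the grading on $\pi(\A)$.} I would prove it by attempting the extension explicitly. Since $\pi$ is already defined and covariant on $\bar{\I}_\R$, the only obstruction is to define $\pi_I$ for intervals $I\in\I$ containing $-1$. Using rotation covariance, for such $I$ I choose a lift $g\in\Diff^{(\infty)}$ of a rotation with $\dot{g}^{-1}I\in\I_\R$ and set $\tilde\pi_I(x):=U_\pi(g)\,\pi_{\dot{g}^{-1}I}\big(U(g)^*xU(g)\big)\,U_\pi(g)^*$. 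Any two admissible choices differ by the deck transformation $\Rot(2\pi)$, which projects to the identity in $\Diff$; using that $U$ commutes with $\Gamma$ and that $U(\Rot(2\pi))=\Gamma$ in the vacuum (the spin-statistics relation~$(8)$), a short computation shows that the two resulting determinations of $\tilde\pi_I$ coincide precisely when
\begin{equation*}
\Ad\big(\rme^{\rmi 2\pi L_0^\pi}\big)\circ\pi_{J}=\pi_{J}\circ\gamma,\qquad J:=\dot{g}^{-1}I,
\end{equation*}
that is, exactly when $\rme^{\rmi 2\pi(L_0^\pi-h_\pi\unit)}$ implements $\gamma$ in the representation $\pi$. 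When this holds the family $\tilde\pi$ is single-valued, and one checks that it is a locally normal, covariant extension of $\pi$ to all of $\I$ satisfying isotony, hence a DHR representation; combined with the dichotomy above this is case $(NS)$, and it contains the vacuum, consistently with $\rme^{\rmi 2\pi L_0}=\Gamma$. When instead the monodromy is trivial, the two determinations differ by $\pi_J\circ\gamma\neq\pi_J$ on odd elements localized at $-1$, no single-valued extension exists, and $\pi$ remains a genuine soliton: case $(R)$. In the ungraded case the monodromy is a scalar, which cannot implement the nontrivial grading (each $\pi_I$ being a faithful normal representation of the type $III$ factor $\A(I)$), so single-valuedness necessarily fails and $\pi$ is forced into type $(R)$.

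The step I expect to be the main obstacle is this equivalence, and within it two points require care: first, that the \emph{only} ambiguity in the definition of $\tilde\pi_I$ is the $2\pi$-monodromy --- this uses the simple connectedness of $\PSI$ and the local-cocycle extension machinery of Section~\ref{sec:CFT} (cf.\ \eqref{eq:CFT-cocycle-def} and Proposition~\ref{prop:CFT-endoms2}) to rule out any finer dependence on the rotation path; and second, the verification that the single-valued $\tilde\pi$ genuinely satisfies isotony and local normality across $-1$, for which one transports the corresponding properties of $\pi$ on $\bar{\I}_\R$ by covariance. Once these are in place, reading off $\rme^{\rmi 2\pi(L_0^\pi-h_\pi\unit)}=\Gamma_\pi$ versus $\unit$ from the first paragraph yields the stated equivalences in $(NS)$ and $(R)$.
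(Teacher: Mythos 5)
Your argument is sound, and it identifies the right mechanism; but note that the paper does not actually prove this theorem — it is imported verbatim from \cite[Sect.4.3]{CKL}, consistent with the authors' announcement that only new results in Section \ref{sec:CFT} are proved. Measured against the argument in \cite{CKL}, your route is a more hands-on, geometric version of the same content: you build the putative extension $\tilde\pi_I$ for $-1\in I$ by rotating into $\I_\R$ and exhibit the $2\pi$-deck transformation as the sole obstruction to single-valuedness, computing via vacuum spin--statistics ($\rme^{\rmi 2\pi L_0}=\Gamma$) that the two determinations agree iff $\Ad(\rme^{\rmi 2\pi L_0^\pi})\circ\pi_J=\pi_J\circ\gamma$; the scalar/grading dichotomy then follows from the displayed computation preceding the theorem, and the ungraded case is forced into type $(R)$ because a scalar cannot implement the nontrivial $\gamma$ on a faithfully represented type $III$ factor. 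The argument in \cite{CKL} packages the same information sector-theoretically: one restricts to $\A^\gamma$, views $\A$ as a graded extension governed by the dual automorphism $\hat\gamma$, and reads the NS/R alternative off the monodromy $\eps(\pi_+,\hat\gamma)\eps(\hat\gamma,\pi_+)$, which the conformal spin--statistics theorem identifies with $\rme^{\rmi 2\pi(h_{\pi_-}-h_{\pi_+})}=\pm 1$. Your version buys self-containedness and makes the geometric origin of the obstruction transparent; the sector-theoretic version buys compatibility with fusion and the classification machinery. The two points you flag as delicate are indeed the ones needing care: (i) independence of $\tilde\pi_I$ within a connected component of admissible rotations does follow from soliton covariance, since rotations commute and the relevant path of intervals stays in $\I_\R$; and (ii) the isotony check for $I_1\subset I_2$ with $-1\in I_2$ requires choosing the \emph{direction} of the rotation according to which side of $-1$ the subinterval $I_1$ sits on, so that the sweep of $I_1$ never crosses $-1$ — once one determination satisfies isotony, single-valuedness transfers it to all. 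Two cosmetic remarks: in the ungraded case the scalar $\rme^{\rmi 2\pi(L_0^\pi-h_\pi\unit)}$ is automatically $+\unit$ (it has eigenvalue $1$ on the lowest-energy space), so the ``$\pm\unit$'' can be sharpened; and for the $(NS)\Rightarrow$ grading direction you should note that the DHR extension is required to be covariant with the \emph{same} $U_\pi$, which is what forces any extension to coincide with your $\tilde\pi$ and hence forces single-valuedness.
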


\begin{proposition}\label{prop:CFT-supersymmetric}
Assume $\A$ is superconformal with central charge $c$ and let $\pi$ be an irreducible $G$-covariant general soliton of $\A$ such that $\rme^{\rmi 4\pi L_0^\pi}$ is a scalar. 
\begin{itemize}
\item[-] If $\pi$ is supersymmetric, then it is a Ramond representation. 
\item[-] Conversely, if $\pi$ is a Ramond representation, then there is a unitary representation with central charge $c$ of the Ramond algebra by operators $G_r^\pi,L_n^\pi$ on $\H_\pi$ such that $\pi_I(\rme^{\rmi G(f)})=\rme^{\rmi G^\pi(f)}$ and $\pi_I(\rme^{\rmi L(f)})=\rme^{\rmi L^\pi(f)}$, for all $f\in\Cci(\S)_I$ and $I\in\I_\R$, and the choice $Q=G^\pi_0$ makes $\pi$ supersymmetric.
\end{itemize}
\end{proposition}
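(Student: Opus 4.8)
The plan is to treat the two implications separately, exploiting the dichotomy of Theorem \ref{th:CFT-RS-N} (which classifies irreducible general solitons with $\rme^{\rmi 4\pi L_0^\pi}$ scalar as either type $(NS)$ or type $(R)$) together with the structure of the super-Virasoro subnet $\A_{\SVirc}\subset\A$ guaranteed by superconformality (Definition \ref{def:CFT-SCFTnet}).

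For the first implication, suppose $\pi$ is supersymmetric, so that $L_0^\pi-\lambda\unit$ admits an odd square-root $Q$ for some $\lambda\in\R$. Being odd means $\Gamma_\pi Q\Gamma_\pi=-Q$, hence $\pi$ is in particular \emph{graded}, with a nontrivial grading unitary $\Gamma_\pi$. The goal is to exclude the $(NS)$ case. I would argue by contradiction: if $\pi$ were of Neveu-Schwarz type, then by Theorem \ref{th:CFT-RS-N} one has $\rme^{\rmi 2\pi(L_0^\pi-h_\pi\unit)}=\Gamma_\pi$. On the other hand $Q^2=L_0^\pi-\lambda\unit$, and since $Q$ is odd it commutes with its own square while anticommuting with $\Gamma_\pi$; one computes $\rme^{\rmi 2\pi Q^2}=\rme^{\rmi 2\pi(L_0^\pi-\lambda\unit)}$ and this operator, being a function of $Q^2$, must commute with the odd operator $Q$ and simultaneously, if it equalled a scalar multiple of $\Gamma_\pi$, would have to anticommute with $Q$ as well. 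The cleanest route is to use the spectral data: since $Q$ is an odd self-adjoint square-root, its nonzero spectrum is symmetric about $0$, so the eigenvalues of $L_0^\pi=Q^2+\lambda\unit$ that come from nonzero $Q$-eigenvalues appear on both even and odd subspaces symmetrically. Comparing this with $\rme^{\rmi 2\pi(L_0^\pi-h_\pi)}=\Gamma_\pi$, which would force the even/odd splitting to be governed by $L_0^\pi\bmod 1$, yields a contradiction with the symmetry forced by the odd square-root, leaving only case $(R)$. This identification of $(R)$ as the supersymmetric case is precisely the expected conclusion, consistent with the Ramond discussion in Example \ref{ex:superVir}.

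For the converse, suppose $\pi$ is a Ramond representation. The key input is that $\A$ contains $\A_{\SVirc}$ and, by Definition \ref{def:CFT-SCFTnet}, $\pi_I(\rme^{\rmi L(f)})$ and $\pi_I(\rme^{\rmi G(f)})$ make sense as the images under $\pi$ of the generating unitaries of $\A_{\SVirc}(I)$. Restricting $\pi$ to the super-Virasoro subnet gives a general soliton of $\A_{\SVirc}$ that is again of Ramond type, and by the construction in Example \ref{ex:superVir} (via \cite{CHKL}) such a soliton integrates to a genuine unitary positive-energy representation of the \emph{Ramond} super-Virasoro algebra at central charge $c$. Concretely, I would recover operators $L_n^\pi$ and $G_r^\pi$ (with $r\in\Z$ in the Ramond case) on $\H_\pi$ by smearing: the smeared fields $L^\pi(f),G^\pi(f)$ exist by the energy bounds cited in Example \ref{ex:superVir}, and the exponentiated relations $\pi_I(\rme^{\rmi G(f)})=\rme^{\rmi G^\pi(f)}$, $\pi_I(\rme^{\rmi L(f)})=\rme^{\rmi L^\pi(f)}$ follow because $\pi_I$ is normal and hence commutes with the strongly continuous one-parameter unitary groups generated by the smeared fields. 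Once the Ramond-algebra relations \eqref{eq:superVir} hold for the $L_n^\pi,G_r^\pi$, the element $Q:=G_0^\pi$ satisfies $Q^2=\tfrac12[G_0^\pi,G_0^\pi]_+=L_0^\pi-\tfrac{c}{24}\unit$, which is the desired odd square-root (odd because $G_0$ is an odd element of the graded algebra and $\Gamma_\pi$ implements $\gamma$), so $\pi$ is supersymmetric with $\lambda=c/24$.

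The main obstacle I anticipate is the converse direction, specifically justifying that the restriction of the \emph{soliton} $\pi$ to the super-Virasoro subnet genuinely integrates to a representation of the Ramond algebra with the correct (integer-moded) $G_r$, rather than merely inheriting the Neveu-Schwarz fields of the ambient vacuum net. This is where the Ramond/Neveu-Schwarz distinction of Theorem \ref{th:CFT-RS-N} must be used carefully: the condition $\rme^{\rmi 2\pi(L_0^\pi-h_\pi\unit)}=\unit$ characterising case $(R)$ is exactly what forces the odd generators to be integer-moded (single-valued around the circle in the Ramond sector), and hence guarantees that $G_0^\pi$ exists as a genuine operator. Controlling the domains and the convergence of the smeared sums in the soliton representation, and verifying the closed-form anticommutator $[G_0^\pi,G_0^\pi]_+=2L_0^\pi+\tfrac{c}{3}(0-\tfrac14)\cdot(-1)\cdot\unit$ — i.e. $=2L_0^\pi-\tfrac{c}{12}\unit$, giving $Q^2=L_0^\pi-\tfrac{c}{24}\unit$ — on an appropriate core, is the technical heart of the argument, but it reduces to results already available in \cite{CHKL} and the energy-bound machinery recalled in Example \ref{ex:superVir}.
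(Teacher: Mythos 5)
Your first implication is essentially the paper's argument. The paper's proof is the one-line computation $Q=-\Gamma_\pi Q\Gamma_\pi^*=-\rme^{\rmi 2\pi(L_0^\pi-h_\pi)}Q\,\rme^{-\rmi 2\pi(L_0^\pi-h_\pi)}=-Q$, using that $Q$ commutes with $L_0^\pi=Q^2+\lambda\unit$ while anticommuting with $\Gamma_\pi$; your ``commute and simultaneously anticommute'' observation is exactly this, and your spectral variant (each $L_0^\pi$-eigenspace is $Q$-invariant but purely even or purely odd in the $(NS)$ case, forcing $Q=0$) is a correct reformulation. A small slip: the intermediate expression $\tfrac{c}{3}(0-\tfrac14)\cdot(-1)$ has a spurious sign; the relation \eqref{eq:superVir} gives $2G_0^2=2L_0-\tfrac{c}{12}\unit$ directly, and your final formula $Q^2=L_0^\pi-\tfrac{c}{24}\unit$ is nevertheless right.

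The converse is where your proposal has a genuine gap, and it sits precisely at the point you flag as ``the technical heart.'' Passing from the Ramond general soliton $\hat\pi=\pi|_{\A_{\SVirc}}$ of the \emph{net} to a unitary positive-energy representation of the Ramond super-Virasoro \emph{Lie algebra} with the same central charge is not a consequence of \cite{CHKL} or of the energy bounds recalled in Example \ref{ex:superVir}: those results go in the opposite direction (from a Lie-algebra representation to smeared fields and then to a net), and the energy bounds you invoke are bounds with respect to $L_0$ in the vacuum sector, not in $\H_{\hat\pi}$. Stone's theorem applied to $t\mapsto\hat\pi_I(\rme^{\rmi tG(f)})$ does give you self-adjoint generators $G^{\hat\pi}(f)$ for \emph{localized} $f\in\Cci(\S)_I$, $I\in\I_\R$, but it does not by itself produce integer-moded Fourier components $G_r^{\hat\pi}$, $L_n^{\hat\pi}$ (in particular $G_0^{\hat\pi}$, which requires smearing with a function that is not localized), nor control of domains, nor the verification that the modes close on the Ramond algebra at central charge $c$. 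The paper resolves exactly this step by appealing to the dedicated result of \cite{CW}, namely that the image of $\A_{\SVirc}$ under a Ramond representation is isomorphic to the net generated by the smeared fields of a genuine Ramond representation of $\SVir$ on $\H_{\hat\pi}$; once that is granted, the rest of your argument ($Q=G_0^{\hat\pi}$, oddness when $\pi$ is graded) goes through as in the paper. So your architecture matches the paper's, but the key input is mis-attributed and, as written, unproved.
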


\begin{proof}
Suppose $\pi$ is a Neveu-Schwarz representation of $\A$. Then it has to be graded by definition: $\Gamma_\pi=\rme^{\rmi2\pi (L_0^\pi -h_\pi)}$, with $h_\pi$ the lowest energy of $\pi$. Suppose there exists an odd supercharge $Q$ for that representation. Then 
$Q^2=L_0^\pi -\lambda \unit$ for some $\lambda \in \R$. According to the preceding theorem, the grading is implemented by $\Gamma_\pi = \rme^{\rmi 2\pi (L_0^\pi -h_\pi)}$, so
\[
 Q = -\Gamma_\pi Q \Gamma_\pi^* = -\rme^{\rmi 2\pi (L_0^\pi-h_\pi)} Q \rme^{-\rmi 2\pi (L_0^\pi-h_\pi)} = -Q
\]
because $Q$ commutes with $L_0^\pi=Q^2+ \lambda \unit$. This is a clear contradiction, so $\pi$ cannot be supersymmetric.

Suppose now instead that $\pi$ is a (graded or ungraded) Ramond representation. The net $\A$ contains the super-Virasoro net $\A_{\SVir,c}$ as an irreducible conformal subnet, with $c$ the central charge of $\A$, and the representation $\pi$ restricts to a (possibly reducible) \emph{Ramond} representation $\hat{\pi}$ of this subnet on $\H_{\hat{\pi}}=\H_\pi$ since $\rme^{\rmi 2\pi L_0^{\hat{\pi}}}=\rme^{\rmi 2\pi L_0^\pi}$ is a scalar. The image of the net $\A_{\SVir,c}$ under a Ramond representation $\hat{\pi}$ is isomorphic to the net $\A_{\SVir,c}^{\hat{\pi}}$ generated directly by the smeared super-Virasoro fields in the  corresponding Ramond representation $\hat{\pi}$ of the Lie algebra $\SVir$ on $\H_{\hat{\pi}}$, as shown in \cite{CW}. But in such a representation of the super-Virasoro algebra, a possible supercharge on $\H_{\pi}$ is $G_0^{\hat{\pi}}$, which follows easily from \eqref{eq:superVir}. Since $(G_{0}^{\hat{\pi}})^2 = L_0^\pi - \frac{c}{24}\unit$, $Q:=G_{0}^{\hat{\pi}}$ forms in fact a supercharge for $\A$ in the representation $\pi$. Moreover, if $\pi$ is graded, $Q$ is odd.
\end{proof}

\section{Noncommutative geometry}\label{sec:NCG}

In noncommutative geometry the notion of spectral triples (called K-cycles in \cite[IV.2.$\gamma$]{Co1}) is fundamental. Depending on the context there are several ways of defining it. For background information and versions adapted to the setting of superconformal field theory consider \cite[Sect.3]{CHKL}. In this paper we content ourselves with the most common

\begin{definition}\label{def:NC-STs}
A {\em $\theta$-summable spectral triple} $(A , (\pi,\H), Q)$ consists of
\begin{itemize}
\item[-] a $*$-algebra $A$
\item[-] a separable Hilbert space $\H$ and a $*$-representation (not necessarily faithful) $\pi:A\ra B(\H)$;
\item[-] a selfadjoint operator $Q$ on $\H$ such that $\rme^{-tQ^2}$ is trace-class, for all $t>0$, and such that $\pi(A)\subset \dom(\delta)$, with $\delta$ the derivation on $B(\H)$ induced by $Q$ (\cf Eq. \eqref{eq:NC-domain1} below).
\end{itemize}
The spectral triple is called \emph{even} if there is a grading $\Gamma$ on $\H$ such that $[\Gamma,\pi(A)]=0$ and $\Gamma Q\Gamma = -Q$. Otherwise, it is called \emph{odd}.
\end{definition}

If $\H$ is a Hilbert space and $\Gamma$ is a grading operator on $\H$, we have the decomposition $\H =\H_+ \oplus \H_-$ of $\H$ as a direct sum of the corresponding even and odd subspaces. Now, if $T$ is a (possibly unbounded and densely defined) operator on $\H$ which is odd, i.e., such that $\Gamma T \Gamma = -T$, 
then we can write 
$$T = \left( 
\begin{array}{cc}
0 & T_-  \\
T_+ & 0 
\end{array} \right), $$ 
with operators $T_\pm$ from (a dense subspace of) $\H_\pm$ to $\H_\mp$.
 Accordingly, if $(A , (\pi,\H), Q)$ is a $\theta$-summable spectral triple then for the selfadjoint $Q$
we can write 
$$Q = \left( 
\begin{array}{cc}
0 & Q_-  \\
Q_+ & 0 
\end{array} \right), $$ 
with $Q_-= Q_+^*$. On the other hand, if $T$ is even, i.e., it commutes with $\Gamma$, then we can write 
$$T = \left( 
\begin{array}{cc}
T_+ & 0  \\
0 & T_- 
\end{array} \right), $$ 
with operators $T_\pm$ from (a dense subspace of) $\H_\pm$ to $\H_\pm$

We recall now that if $Q$ is a selfadjoint operator on $\H$ (not necessarily graded), then one obtains a derivation $\delta$ as follows: $\dom(\delta)$ is the set of elements $x\in B(\H)$ such that
\begin{equation}\label{eq:NC-domain1}
(\exists y\in B(\H))\quad x Q \subset Q x -y,
\end{equation}
in which case $\delta(x) := y$. If $\H$ is graded and $Q$ odd, then one also obtains a superderivation $\delta_s$ in a similar manner: $\dom(\delta_s)$ is the set of elements $x\in B(\H)$ such that
\begin{equation}\label{eq:NC-domain2}
(\exists y\in B(\H))\quad \Gamma x \Gamma Q \subset Q x -y, 
\end{equation}
in which case $\delta_s(x) := y$; clearly, the restrictions of $\delta_s$ and $\delta$ to the even elements are derivations and coincide: $\delta_s |_{\dom(\delta_s)^\Gamma}=\delta |_{\dom(\delta)^\Gamma}$. In either of the two cases, $\dom(\delta)$ (or $\dom(\delta_s)$) equipped with the norm $\|\cdot\|+\|\delta(\cdot)\|$ (or $\|\cdot\|+\|\delta_s(\cdot)\|$, respectively) becomes a Banach $*$-algebra \cite[Cor.2.3]{CHKL}. We remark that based on such a superderivation we introduced the concept of graded spectral triples in \cite{CHKL}, which, however, shall play no role in the present article.

A remarkable generalization of de Rham homology of currents in differential geometry to the noncommutative setting was found by Connes to be cyclic cohomology.  Here we will consider its entire version which is suitable for applications to quantum field theory. For extensive discussions and further references consider the standard textbooks \cite{Co1,GBVF}.

\begin{definition}\label{def:NC-JLO}
(1) Let $(A,(\|\cdot\|_i)_{i\in I})$ be a locally convex unital $*$-algebra and, for any nonnegative integer $n$, let $C^n(A)$ be the vector space of multilinear maps : $A\times (A/{\C \unit})^n \to \C$. We will identify $C^n(A)$ with the space of $(n+1)$-linear forms $\phi$ on $A$ such that $\phi(a_0,a_1, ...,a_n)=0$ if $a_i= \unit$ for some $1\leq i \leq n$ (simplicial normalization). For integers $n < 0$ we set $C^n(A) := \{0\}$. Let $C^\bullet(A) := \prod_{k=0}^\infty C^k(A)$ be the space of sequences $\phi= (\phi_k)_{k \in \N_0}$, $\phi_k \in C^k(A)$ and define the operators $b:C^\bullet(A) \to C^\bullet(A)$ and $B:C^\bullet(A) \to C^\bullet(A)$ by 

\begin{align*}
(b\phi)_k (a_0, ..., a_k) :=& \sum_{j=0}^{k-1} (-1)^j \phi_{k-1}(a_0, ..., a_j a_{j+1}, ..., a_k)\\
& + (-1)^k \phi_{k-1}(a_k a_0, a_1,  ..., a_{k-1}),\\
(B\phi)_k(a_0, ..., a_k)  := & \sum_{j=0}^k (-1)^{jk} \phi_{k+1}(\unit, a_j, ..., a_n, a_0, ..., a_{j-1}).
\end{align*}

The linear map $\partial: C^\bullet(A) \ra C^\bullet (A)$ defined by $\partial := b + B$ satisfies $\partial^2=0$ and, with 
 the boundary operator $\partial$, $C^\bullet(A)$ becomes the \emph{cyclic cocomplex} $C^\bullet(A) = (C^e(A),C^o(A))$ over $\Z/2\Z$, namely the elements of  $C^e(A)= \prod_{k=0}^\infty C^{2k}(A)$ (the {\it even cochains})
are mapped into the elements of $C^o(A)= \prod_{k =0}^\infty C^{2k+1}(A)$ (the {\it odd cochains}) and vice versa.

(2) A cochain $\phi= (\phi_k)_{k\in\NN} \in C^\bullet(A)$ is called \emph{entire} if, for every bounded subset $B\subset A$, there is a constant $c_B$ such that
\[
 |\phi_k(a_0,...,a_k)| \le \frac{1}{\sqrt{k!}} c_B, \quad a_i\in B, k\in\NN.
\]
Letting $CE^\bullet(A)$ be the entire elements in $C^\bullet(A)$, one defines the \emph{entire cyclic cohomology} $(HE^e(A), HE^o(A))$ of $A$ as the cohomology of the cocomplex $((CE^e(A),CE^o(A)), {\partial})$. The cohomology class of an entire {\it cocycle} 
$\phi \in CE^\bullet(A) \cap \ker (\partial)$ will be denoted by $[\phi]$.
\end{definition}

Concerning entireness, there are a few alternative conventions in the original literature \cite{Co1,GS,JLO1}  but what matters is actually only that everything is chosen in a consistent way. Usually the setting is that of Banach algebras while here we are dealing only with locally convex algebras, a generalization discussed in \cite[IV.7.$\alpha$]{Co1}, \cf also \cite{Mey}. The above entireness condition for a cyclic cochain $\phi$ can be reformulated as follows: for every bounded subset $B\subset A$ and every $\lambda>0$, there is $c_{B,\lambda}$ such that
\[
 |\phi_k(a_0,...,a_k)| \le \frac{1}{\sqrt{k!}} c_{B,\lambda} \lambda^k, \quad a_i\in B,k\in\N,
\]
or again in another way: for every bounded subset $B\subset A$, we have
\[
 \limsup_{k\ra\infty} \left( \sqrt{k!} \sup_{a_i\in B} |\phi_k(a_0,...,a_k)| \right)^{1/k} = 0.
\]
In the case of a Banach algebra $A$, it suffices to study the unit sphere as bounded subset, and there we obtain the classical entireness condition $\limsup_{k\ra\infty} (\sqrt{k!}\|\phi_k\|)^{1/k} = 0$ from \cite{GS,JLO1}.

We now recall, (cf. \eg \cite{Bl, Co1,Cor11}) the definition of the K-groups for a unital locally convex algebra $(A,\|\cdot\|_{i\in I})$:
\begin{itemize}
\item[$(K_0)$]  Let $\Mat_r(A)$ be the  locally convex algebra of $r\times r$ matrices over $A$, $r\in \N$, and let $\Mat_\infty(A)$ denote the 
algebra of infinite matrices over $A$ with only finitely many nonzero entries. The maps $x\mapsto \diag(x,0)$ define natural embeddings $\Mat_r(A) \ra \Mat_{r+1}(A)\ra ...\ra \Mat_\infty(A)$. We denote by $P\Mat_r(A)$ the set of idempotents in $\Mat_r(A)$, $r\in \N \cup \{\infty\}$. An equivalence relation on $P\Mat_\infty(A)$ is defined by $p \sim q$ if there are $x, y \in \Mat_\infty(A)$ such that $p=xy$, $q=yx$. There is a binary operation 
$$(p_1,p_2)\in P\Mat_{r_1}(A) \times P\Mat_{r_2}(A)  \mapsto  p_1\oplus p_2 := \diag(p_1, p_2) \in P\Mat_{r_1+r_2}(A),$$ 
which turns $P\Mat_\infty(A)/\sim$ into an abelian semigroup. Then the \emph{$K_0$-group of $A$} is defined as
\[
  K_0(A) := \textrm{Grothendieck group of}\; P\Mat_\infty(A)/\sim,
\]
where the Grothendieck group of an arbitrary additive semigroup $H$ is the group of formal differences of elements of 
$H$, i.e.,
\[
  (H\times H) /\{(h_1,h_2) \sim_{H\times H} (g_1,g_2)\; \Leftrightarrow \; (\exists k\in H) h_1+g_2 +k = h_2 + g_1 +k \}.
\]
We write $[p]$ for the element in $K_0(A)$ induced by $p\in P\Mat_\infty (A)$.

\item[$(K_1)$] Let $\GL_r(A)$ denote the group of invertible elements in $\Mat_r(A)$.  With the diagonal inclusion $u\in  \GL_r(A) \mapsto u\oplus 1:= \diag (u,1) \in  \GL_{r+1}(A)$, this gives a directed family, and the inductive limit $\GL_\infty(A) := \varinjlim \GL_r(A)$, with the inductive limit topology,  is a topological group. Its connected component of the identity is denoted by $\GL_\infty(A)_0$. Then the \emph{$K_1$-group of $A$} is defined as the quotient
\[
 K_1(A) := \GL_\infty(A)/ \GL_\infty (A)_0
\]
and it turns out to be abelian. 
We write $[u]$ for the element in $K_1(A)$ induced by $u \in \GL_\infty(A)$.    
\end{itemize}

In order to describe the pairing between entire cyclic cohomology and K-theory we need to introduce a canonical extension of a linear functional 
$\phi_k \in C^k(A)$  to a linear functional $\phi_k^r \in C^k(\Mat_r(A))$, defined as follows after the 
identification $\Mat_r(A) \simeq \Mat_r(\C) \otimes A$,  

\begin{equation}
 \phi_k^r (m_0 \otimes a_0, .... ,m_k\otimes a_k) :=  \tr (m_0...m_k)  \phi_k(a_0, .... ,a_k).
\end{equation}
The map $\phi \mapsto \phi^r$ is a morphism of the complexes of entire chains, see \cite[IV.7.$\delta$]{Co1}.
Let $(A , (\pi,\H), Q)$ be a $\theta$-summable spectral triple. We denote by $\pi_r$ the representation of $\Mat_r(A)$ on 
$\H_r:= \C^r \otimes \H$ defined by $\pi_r(m \otimes a) = m \otimes \pi(a)$, $m \in \Mat_r(\C)$, $a\in A$.  Moreover, for every operator $T$ on $\H$ we 
consider the operator $T_r = \unit \otimes T$ on $\H_r$. Then, for every $r\in \N$,  $(\Mat_r(A) , (\pi_r,\H_r), Q_r)$ is a $\theta$-summable spectral triple which is even with grading $\Gamma_r$ if $(A , (\pi,\H), Q)$ is  even with grading $\Gamma$.

The concluding main theorem about $\theta$-summable spectral triples and entire cyclic cohomology is (in chronological order) mainly due to \cite{Connes88,JLO1,GS,JLO2,CP1} in the case of Banach algebras. The basic ingredients are the JLO cochains  associated to a spectral triple $(A , (\pi,\H), Q)$. They are obtained from the $(n+1)$-linear forms $\tau_n$ on $A$ defined by
\begin{equation}\label{eq:JLO-def}
\begin{aligned}
 \tau_{n}(a_0,...a_n) =
\int_{0\le t_1\le ...\le t_n \le 1}\tr \Big( & {\Gamma} {\pi}(a_0) \rme^{-t_1 {Q}^2} [{Q}, {\pi}(a_1)]\rme^{-(t_2-t_1) {Q}^2} ... \\
& ...[{Q},{\pi}(a_n)]\rme^{-(1-t_n) {Q}^2} \Big) \rmd t_1 ... \rmd t_n ,
\end{aligned}
\end{equation}
for $n>0$ and $\tau_0(a_0)= \tr(\Gamma \pi(a_0) \rme^{-Q^2})$,
where we take ${\Gamma}= \unit$ if the spectral triple is odd. 
The locally convex version we shall need can be found basically in \cite[IV.7]{Co1} and reads as follows:
\begin{theorem}\label{th:JLO}
Let $A$ be a unital locally convex $*$-algebra, and let $(A,(\pi,\H),Q)$
be a $\theta$-summable spectral triple such that the representation $\pi$ of $A$ in the Banach algebra $\dom (\delta)$ is continuous.
\begin{itemize}
\item[$(1)$] If the spectral triple is even (odd), then the cochain $(\tau_{n})_{n\in 2\NN}$ ($(\tau_{n})_{n\in 2\NN+1}$) is an even (odd) entire cyclic cocycle, called the {\em even (odd) JLO cocycle} or \emph{Chern character}.
\item[$(2)$] Suppose $(Q_{t})_{t\in[0,1]}$ is a differentiable homotopy between the two (odd) self-adjoint operators $Q_{0},Q_{1}$, \ie the domain of $Q_t$ does not depend on $t\in [0,1]$ and $t\mapsto \overline{Q_{t}-Q_{0}}$ is a norm differentiable $B(\H)$-valued function. Then for the corresponding even or odd JLO cocycles $\tau^t$ we have $[\tau^s] = [\tau^t]$, for $s,t\in[0,1]$, namely the entire cohomology class of $\tau^t$ does not depend on $t\in [0,1]$. \index{norm-differentiable homotopy}
\item[$(3)$] The values of the maps  $(\phi,p) \in \left( CE^e(A) \cap \ker (\partial)  \right) \times P\Mat_r(A) \mapsto \phi (p) \in \C$, 
$r \in \N$, where 

\[ 
\phi (p) := \phi_0^{r}(p) + \sum_{k=1}^\infty (-1)^k \frac{(2k)!}{k!} \phi_{2k}^{r}((p-\frac12),p,...,p) , 
\]
only depend on the cohomology class of the entire cocycle $\phi$ and on the K-theory class of the idempotent $p$ and are additive on the latter. Hence they give rise to a pairing $\langle [\phi], [p] \rangle := \phi(p)$ between the even entire cyclic cohomology $HE^e(A)$ and K-theory $K_0(A)$. Moreover, the operator
$\pi_r(p)_{-} {Q_r}_{+} \pi_r(p)_+$ from $\pi_r(p)_+{\H_r}_+$ to $\pi_r(p)_-{\H_r}_-$ is a Fredholm  operator and for the even JLO cocycle 
$\tau$ we have
\[
\tau (p) = \langle [\tau], [p] \rangle = \ind\big( \pi_r(p)_{-} {Q_r}_{+} \pi_r(p)_+ \big) \in \Z .
\]

The values of the maps  $(\phi,u) \in \left( CE^o(A) \cap \ker (\partial)  \right) \times \GL_r(A) \mapsto \phi (u) \in \C$, 
$r \in \N$, where 
\[
\phi (u) := \frac{1}{\sqrt{\pi}}\sum_{k=0}^\infty (-1)^k k! \phi^r_{(2k+1)}(u^{-1}, u, ..., u^{-1},u),
\]

only depend on the cohomology class of the entire cocycle $\phi$ and on the K-theory class of $u$ and are additive on the latter. Hence they give rise to a pairing $\langle [\phi], [u] \rangle := \phi(u)$ between odd entire cyclic cohomology $HE^o(A)$ and K-theory $K_1(A)$. Moreover, if $\pi_r(u)$ is unitary, the operator $\chi_{[0,\infty)}(Q_r) \pi_r (u) \chi_{[0,\infty)}(Q_r)$ from 
$\chi_{[0,\infty)}(Q_r)\H_r$ to $\chi_{[0,\infty)}(Q_r)\H_r$ is a Fredholm operator  and for the odd JLO cocycle  $\tau$ we have
\[
\tau(u) =  \langle [\tau], [u] \rangle =   
\ind \big( \chi_{[0,\infty)}(Q_r) \pi_r (u) \chi_{[0,\infty)}(Q_r) \big) \in \Z. 
\]

\end{itemize}
\end{theorem}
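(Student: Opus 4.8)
The plan is to read Theorem~\ref{th:JLO} as a locally convex refinement of the classical $\theta$-summable index theorem of \cite{JLO1,GS,CP1,Connes88}, so my strategy throughout is to reduce each assertion to its Banach-algebra counterpart and then to check that the hypotheses survive. The only role of the locally convex structure is the continuity assumption: since $\pi\colon A\to\dom(\delta)$ is continuous and $\dom(\delta)$ carries the Banach norm $\|\cdot\|+\|\delta(\cdot)\|$, every bounded $B\subset A$ has image $\pi(B)$ bounded in $\dom(\delta)$, so both $\|\pi(a)\|$ and $\|\delta(\pi(a))\|$ are bounded over $a\in B$. This lets all the analytic estimates run uniformly over $\pi(B)$ exactly as in the Banach case. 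I fix this reduction first and then treat the three parts in turn.

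For (1) I would establish entireness by a Schatten--Hölder estimate. Writing $b_i:=[Q,\pi(a_i)]=\delta(\pi(a_i))\in B(\H)$ and $s_0:=t_1$, $s_j:=t_{j+1}-t_j$, $s_n:=1-t_n$ so that $\sum_{j=0}^n s_j=1$, Hölder's inequality for the Schatten ideals with conjugate exponents $p_j=1/s_j$, together with $\|\rme^{-sQ^2}\|_{1/s}=(\tr\rme^{-Q^2})^{s}$, bounds the integrand of \eqref{eq:JLO-def} by $\tr(\rme^{-Q^2})\,\|\pi(a_0)\|\prod_{i=1}^n\|b_i\|$, uniformly over the open simplex. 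Integrating over $\{0\le t_1\le\cdots\le t_n\le1\}$, of volume $1/n!$, yields $|\tau_n(a_0,\ldots,a_n)|\le\frac{1}{n!}\tr(\rme^{-Q^2})\|\pi(a_0)\|\prod_i\|b_i\|$, which is the entireness condition of Definition~\ref{def:NC-JLO} since $\pi(B)$ is bounded in $\dom(\delta)$. The cocycle identity $(b+B)\tau=0$ is then purely algebraic: using the graded Leibniz rule $[Q^2,\pi(a)]=Q\,b_a+b_a\,Q$ and $\frac{\rmd}{\rmd t}\rme^{-tQ^2}=-Q^2\rme^{-tQ^2}$, one integrates by parts over the faces of each simplex; the boundary contributions reproduce precisely the $b$- and $B$-terms with opposite signs and cancel, the grading $\Gamma$ (with $\Gamma=\unit$ in the odd case) and the cyclicity of $\tr$ supplying the correct signs. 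This is the standard JLO computation, which I would reproduce.

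For (2) I would use the transgression argument. With $\dot Q_t:=\frac{\rmd}{\rmd t}\overline{Q_t-Q_0}\in B(\H)$ existing by hypothesis, I define a transgression cochain $\eta^t$ obtained from the JLO integrand by inserting, in all admissible positions, one extra factor $\dot Q_t$, and prove the identity $\frac{\rmd}{\rmd t}\tau^t=(b+B)\eta^t$ by differentiating the heat semigroups under the trace and the simplex integral and reorganizing. The uniform trace-class bounds from part~(1), now applied with $Q_t$ in place of $Q$, justify interchanging $\frac{\rmd}{\rmd t}$ with $\tr$ and with the integral and show that $\eta^t$ is entire; integrating from $s$ to $t$ gives $\tau^t-\tau^s=(b+B)\!\int_s^t\eta^u\,\rmd u$, whence $[\tau^s]=[\tau^t]$.

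For (3) I would first verify that the pairings are well defined: the formulas for $\phi(p)$ and $\phi(u)$ are invariant under smooth homotopies of the idempotent $p$ (respectively the invertible $u$), so they factor through $K_0(A)$ (respectively $K_1(A)$); they vanish when $\phi=(b+B)\psi$, so they factor through $HE^e(A)$ (respectively $HE^o(A)$); and they are additive on $p_1\oplus p_2$. Passing to matrices via $\phi\mapsto\phi^r$ and $\pi\mapsto\pi_r$ is formal, so this step follows \cite[IV.7.$\delta$]{Co1} verbatim. The index formula is the heart of the theorem and the step I expect to be the main obstacle. Fredholmness of $\pi_r(p)_{-}{Q_r}_{+}\pi_r(p)_{+}$ (respectively $\chi_{[0,\infty)}(Q_r)\pi_r(u)\chi_{[0,\infty)}(Q_r)$) is not hard: $\theta$-summability forces $Q$ to have compact resolvent, so $Q_{r,+}$ is Fredholm, and $[\pi_r(p),Q_r]$ is bounded because $\pi(A)\subset\dom(\delta)$, which makes the compression a Fredholm operator. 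The identification of the pairing with this index is the delicate analytic point: I would relate the JLO pairing to the McKean--Singer supertrace $\tr(\Gamma\,\rme^{-\beta Q^2})$, exploit the scaling invariance of the pairing established in \cite{GS,CP1}, and pass to the limit $\beta\to\infty$, in which the JLO integrand localizes on $\ker Q$ and the supertrace collapses to the superdimension of the kernel of the compressed operator. Here I would lean directly on the computations of \cite{GS,CP1,Connes88}, transcribing them to the matrix algebras $\Mat_r(A)$, rather than reproving them.
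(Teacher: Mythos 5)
Your proposal is correct and follows essentially the same route as the paper: the paper likewise reduces everything to the Banach-algebra case by observing that continuity of $\pi\colon A\to\dom(\delta)$ sends bounded sets of $A$ to bounded sets of the Banach algebra $\dom(\delta)$, and then invokes the classical results of Connes, Getzler--Szenes, JLO and Carey--Phillips for entireness, homotopy invariance, the pairing and the index formulas. The only difference is one of exposition — you sketch the Hölder estimate, the transgression formula and the large-$\beta$ localization that the paper leaves entirely to the cited references — but the logical structure is identical.
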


One usually encounters this theorem in the context of Banach algebras. However, it extends to the setting of locally convex algebras. Concerning the generalization of points (1) and (2), we just have to check that the locally convex entireness conditions are satisfied by $\tau$, and then follow the lines of \eg \cite{CP1,GS}: the continuity of $\pi : A \to \dom(\delta)$ implies its boundedness, \ie bounded sets in $A$ are mapped into bounded sets in the Banach algebra $\dom(\delta)$, so that the entireness of the JLO cochain associated to $(\dom(\delta), ({\rm id} , \H), Q)$ implies the entireness of the JLO cochain for $(A,(\pi,\H),Q)$. The pairing with K-theory in part (3) is proved in \cite[IV.7. $\delta$, Theorem 21] {Co1} (even case) and  
\cite[IV.7.$\epsilon$, Corollary 27]{Co1}, cf. also \cite{Getz93}, (odd case). The index formula in the even case follows from \cite[IV.8.$\delta$, Theorem 19]{Co1} and 
\cite[IV.8.$\epsilon$, Theorem 22]{Co1}. The index formula in the odd case for Banach $*$-algebras follows from \cite[Corollary 7.9]{CP1} and 
\cite[Theorem 10.8]{CP1}. Hence the formula is true for the $\theta$-summable spectral triple $(\dom(\delta), ({\rm id} , \H), Q)$. Accordingly the formula for $(A,(\pi,\H),Q)$ with $A$ locally convex follows from the continuity of $\pi :  A \to \dom(\delta)$ and the formula for the pairing with $K_1$ for general locally convex algebras.  

\medskip

We shall need the following proposition in Section \ref{sec:gen-ST}. Consider the JLO cocycle $\tau$ associated to a spectral triple 
$(A,(\pi,\H),Q)$, even (with grading operator $\Gamma$) or odd. Let $v \in \dom(\delta)$ be a unitary operator 
commuting with $\Gamma$ if the spectral triple is even and let $\pi^v$ be the continuous representation of $A$ on $\H$ defined by
$\pi^v(a):=v\pi(a)v^*$, $a\in A$. Then $(A,(\pi^v,\H),Q)$ is again a spectral triple (with grading operator $\Gamma$ in the even case) and we denote by $\tau^v$ the corresponding JLO cocycle.

\begin{proposition}\label{prop:perturb-v}
$[\tau^v] = [\tau]$ for every unitary $v \in \dom(\delta)$ (commuting with $\Gamma$ in the even case). 
\end{proposition}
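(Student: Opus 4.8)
The plan is to reduce the replacement of $\pi$ by $\Ad(v)\circ\pi$ to a bounded selfadjoint perturbation of $Q$, and then to interpolate linearly and invoke the homotopy invariance of the JLO class, Theorem \ref{th:JLO}(2).

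First I would set $\tilde Q:=v^*Qv$, which is selfadjoint with $\dom(\tilde Q)=\dom(Q)$ because $\tilde Q-Q=v^*\delta(v)$ is bounded; indeed $\delta(v)=[Q,v]\in B(\H)$ since $v\in\dom(\delta)$. In the even case $v^*\Gamma v=\Gamma$, so $(A,(\pi,\H),\tilde Q)$ is again a $\theta$-summable spectral triple with the same grading. Conjugating the entire JLO integrand by $v$ and using invariance of the trace under unitary conjugation together with the identities $v^*\rme^{-tQ^2}v=\rme^{-t\tilde Q^2}$ and $v^*[Q,v\pi(a)v^*]v=[\tilde Q,\pi(a)]$, one checks termwise that $\tau^v$ coincides with the JLO cocycle $\tilde\tau$ of $(A,(\pi,\H),\tilde Q)$. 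Hence it suffices to prove $[\tilde\tau]=[\tau]$, i.e.\ that passing from $Q$ to $v^*Qv$ leaves the class unchanged.

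Next I would connect $Q$ and $\tilde Q$ by the straight-line path
\[
Q_s:=(1-s)Q+s\tilde Q=Q+s\,v^*\delta(v),\qquad s\in[0,1],
\]
whose members are selfadjoint with common domain $\dom(Q)$ and are odd with respect to $\Gamma$ in the even case (since $\tilde Q$ is, because $v$ commutes with $\Gamma$). The map $s\mapsto Q_s-Q_0=s\,v^*\delta(v)$ is norm-differentiable, so this is a differentiable homotopy in the sense of Theorem \ref{th:JLO}(2). Writing $\delta_{Q_s}$ for the derivation induced by $Q_s$, from $\pi(A)\subset\dom(\delta)$ and boundedness of $v^*\delta(v)$ one gets $[Q_s,\pi(a)]=\delta(\pi(a))+s[v^*\delta(v),\pi(a)]\in B(\H)$, so $\pi(A)\subset\dom(\delta_{Q_s})$; moreover the $\delta_{Q_s}$-graph norm is dominated by the $\delta$-graph norm, so $\pi$ remains continuous into the relevant Banach algebra. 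Granting the trace-class property treated below, each $(A,(\pi,\H),Q_s)$ is a genuine $\theta$-summable spectral triple, and Theorem \ref{th:JLO}(2) yields $[\tau]=[\tau^0]=[\tau^1]=[\tilde\tau]=[\tau^v]$.

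The step I expect to be the main obstacle is verifying that $\theta$-summability persists along the path, i.e.\ that $\rme^{-tQ_s^2}$ is trace-class for all $s\in[0,1]$ and $t>0$. Writing $B:=s\,v^*\delta(v)$ (bounded, selfadjoint) and estimating on $\dom(Q)$,
\[
\|(Q+B)\psi\|^2\ge(1-\e)\|Q\psi\|^2-\e^{-1}\|B\|^2\|\psi\|^2,\qquad \e\in(0,1),
\]
gives the quadratic-form bound $(Q+B)^2\ge(1-\e)Q^2-\e^{-1}\|B\|^2$. Note here I deliberately avoid appealing to operator monotonicity of $x\mapsto\rme^{-tx}$ (which fails); instead, by the min-max principle the form bound forces the $n$-th eigenvalue $\nu_n$ of $(Q+B)^2$ to satisfy $\nu_n\ge(1-\e)\mu_n-\e^{-1}\|B\|^2$, where $\mu_n$ are the eigenvalues of $Q^2$. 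In particular $\nu_n\to\infty$, so $(Q+B)^2$ has discrete spectrum, and summing the exponentials gives
\[
\tr\big(\rme^{-t(Q+B)^2}\big)\le \rme^{t\e^{-1}\|B\|^2}\,\tr\big(\rme^{-t(1-\e)Q^2}\big)<\infty,
\]
the finiteness holding by $\theta$-summability of the original triple. (Should this bounded-perturbation stability already be recorded as a lemma, this step reduces to a citation.)
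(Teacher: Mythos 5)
Your proof is correct and follows essentially the same route as the paper: identify $\tau^v$ with the JLO cocycle of $(A,(\pi,\H),v^*Qv)=(A,(\pi,\H),Q+v^*\delta(v))$ and then apply the homotopy invariance of Theorem \ref{th:JLO}(2) along the linear path $Q_s=Q+s\,v^*\delta(v)$. The additional verifications you supply (persistence of $\theta$-summability and of the domain condition along the path) are correct and merely make explicit what the paper leaves implicit.
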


\begin{proof} First note that the JLO cocycle $\tau^v$ coincides with the JLO cocycle associated to the spectral triple 
$(A,(\pi,\H), v^*Qv)= (A,(\pi,\H), Q +v^*\delta(v))$. Now, the family $Q_t:=Q + tv^*\delta(v)$ with $t\in [0,1]$ is a differentiable homotopy between $Q$ and $v^*Qv$, so that $[\tau] = [\tau^v]$ according to Theorem \ref{th:JLO} (2). 
\end{proof}

\section{Spectral triples and cyclic cocycles for superconformal nets}\label{sec:gen-ST}

We briefly recall our objective from the Introduction. Given a superconformal net $\A$ with grading automorphism $\gamma$ and acting on the (vacuum) Hilbert space $\H$, we would like to associate in a canonical way a (family of) $\theta$-summable spectral triple(s), which in a second step should give rise to entire cyclic cocycles corresponding to certain localized endomorphisms of $C^*(\A^\gamma)$. These cocycles will be investigated in the index pairing of the next section.

\subsection*{The spectral triples}

In order to construct our spectral triples, we need a suitable representation $(\pi,\H_\pi)$ of $\A$ and want $\A$ to have a supercharge for the conformal Hamiltonian: an odd selfadjoint operator $Q$ on $\H_\pi$ such that $Q^2= L_0^\pi$ up to an additive constant. The existence of $Q$ depends on the representation $\pi$ of $\A$, and according to Proposition \ref{prop:CFT-supersymmetric} it exists precisely if $\pi$ is a Ramond representation, in which case our fixed choice shall be $Q=G_0^\pi$. Let $(\pi_R,\H_R)$ denote henceforth a certain fixed (graded or ungraded) irreducible Ramond representation of $\A$, to be described in more detail below. By restriction it gives rise to a (possibly reducible) representation $\pR |_{\A^\gamma}$ of $\A^\gamma$ and thus to a representation of $C^*(\A^\gamma)$ and to 
its unique normal extension to $W^*(\A^\gamma)$, for which we write simply $\pR$ again whenever confusion with the original representation of $\A$ is unlikely. Note then that, for a localized endomorphism $\rho$ of $C^*(\A^\gamma)$, $\pR\circ\rho$ is again a representation of $C^*(\A^\gamma)$ or $W^*(\A^\gamma)$ on the same Hilbert space $\H_R$. Throughout the rest of the paper we shall make the 

\bigskip
\noindent\textbf{Standing Assumption.}  $\A$ satisfies the \emph{split property} as in Definition \ref{def:CFT-split} and the \emph{trace-class condition} in the representation $\pR$, namely
\begin{equation}\label{eq:gen2-trace-class}
\tr_{\H_R} \Big(\rme^{-t L_0^{\pR}}\Big) < \infty,\quad t>0.
\end{equation}

Now a few observations. First, Proposition \ref{prop:CKL22} tells us how to distinguish between graded and ungraded representations, corresponding to the even and odd index pairing in Theorem \ref{th:JLO}(3). Second, in view of Theorem \ref{th:JLO} a K-theoretical index pairing makes sense only for the \emph{even} subnet $\A^\gamma$: physically $G_0^\pR$ being an odd element in the super-Virasoro algebra should induce a superderivation while in the index pairing we need the induced derivation, so we have to deal with $\A^\gamma$ where they coincide. Third, we would like our spectral triples to exhibit some aspects of the sector structure of this subnet $\A^\gamma$. Working with the \emph{local} algebras $\A^\gamma(I)$ lets us face a serious obstruction as we shall see later in Proposition \ref{prop:gen2-tau1}. Finally, if possible we would like to establish a correspondence between equivalence of representations and equivalence of the corresponding JLO entire cyclic cocycles. Based on $\A$, $\pR$ and $Q$, we therefore have to construct a couple $(\Delta,\AA_\Delta)$ consisting of a suitable family $\Delta\subset \Delta^0$ of localized endomorphisms and a subalgebra $\AA_\Delta$ of the (global) universal von Neumann algebra 
$W^*(\A^\gamma)$ equipped with a topology such that the JLO cocycles associated to the family $(\AA_\Delta,(\pR \circ \rho,\H_R),Q)_{\rho\in\Delta}$ of spectral triples become entire. After that we can study the question of equivalence.

Let us recall from \eqref{eq:CFT-cocycle-def} and Proposition \ref{prop:CFT-endoms2} that, for every endomorphism $\rho$ of 
$C^*(\A^\gamma)$ localized in $I$, one can define the cocycle $z(\rho,g) \in C^*(\A^\gamma)$, $g\in\PSI$. 
It satisfies
\begin{equation}\label{eqcovcocycle1}
\rho^g:= \alpha_g\circ\rho\circ \alpha_g^{-1}=\Ad(z(\rho,g)^*)\circ \rho, \quad g\in \PSI.
\end{equation}
Moreover if the closure of $I$ is contained in some $I_0 \in \I$ and ${\mathcal U}_{I,I_0}$ is the connected component of the identity of the 
open set $\{g\in \PSI : \dot{g}\overline{I} \subset I_0\}$ we have 
$z(\rho,g) = \iota_{I_0}(U_{\pi^\gamma_0 \circ \rho}(g) U^\gamma(g)^* )$ for all $g \in {\mathcal U}_{I,I_0}$. 

Notice also that  it follows from \eqref{eqcovcocycle1} and Proposition \ref{prop:CFT-endoms2} that
\begin{equation}\label{eqcovcocycle2}
z(\rho^h,g)= z(\rho,h)^*z(\rho,g)\alpha_g(z(\rho,h)) = z(\rho,h)^*z(\rho,gh)
,\quad  g, h \in \PSI.
\end{equation}

\begin{definition}\label{def:gen2-Delta}
A covariant endomorphism $\rho\in\Delta^0$ of $C^*(\A^\gamma)$ is called \emph{differentiably transportable} if it is localized in some $I\in\I$ and $\pi_R (z(\rho,g)) \in \dom(\delta)$, for all $g\in\PSI$.
The set of differentiably transportable endomorphisms localized in $I$ is denoted by $\Delta_I^1$, and
we set $\Delta^1 := \bigcup_{I\in\I} \Delta_I^1$. 
\end{definition}

\begin{remark}\label{remarkDelta^1cov} 
As a consequence of \eqref{eqcovcocycle2} we have that, for any $g\in \PSI$, $\rho \in \Delta_I^1$ if and only if $\rho^g\in \Delta_{gI}^1$.
\end{remark}

Now, for any $I \in \I$, let $p_{I}$ be the middle point of $I$ and let $P_I$ be the dilation-translation subgroup of $\PSI$ fixing $p_{I'}$. 

\begin{proposition}\label{prop:gen2-Delta}
Let $\rho\in\Delta^0_I$, $I\in \I$ and let ${\mathcal U}$ be an open subset of $\PSI$ containing $P_I$.
Assume that 
$\pi_R (z(\rho,g)) \in \dom(\delta)$ for all $g\in {\mathcal U}$. Then $\rho\in\Delta^1_I$. 
In particular, an endomorphism $\rho\in\Delta^0_I$ belongs to $\Delta^1_I$ if and only if $\pi_R (z(\rho,g)) \in \dom(\delta)$ for all 
$g\in {\mathcal U}_I$, where ${\mathcal U}_I$ is the connected component of $\unit$ of the open set 
$\{g\in \PSI: p_{I'} \notin \dot{g}\bar{I} \}$. 
\end{proposition}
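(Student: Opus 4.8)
The plan is to show that the set of group elements for which $\pR(z(\rho,g))$ lies in $\dom(\delta)$ exhausts $\PSI$, bootstrapping out of the open set $\mathcal{U}$ by means of the cocycle identity \eqref{twovarcocycle1} together with two structural facts: an Iwasawa-type decomposition of $\PSI$ adapted to $P_I$, and the invariance of $\dom(\delta)$ under the adjoint action of the rotation unitaries. Throughout I would use that $\dom(\delta)$ is a Banach $*$-algebra under $\|\cdot\|+\|\delta(\cdot)\|$ (so finite products stay in $\dom(\delta)$) and that, since $\pR\circ\alpha_g=\Ad U_{\pR}(g)\circ\pR$, the cocycle identity reads in $\pR$ as $\pR(z(\rho,gh))=\pR(z(\rho,g))\cdot \Ad U_{\pR}(g)\big(\pR(z(\rho,h))\big)$.

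First I would record the key rotation lemma: writing $Q=G_0^{\pR}$, the relation $[L_0,G_0]=0$ in the Ramond algebra \eqref{eq:superVir} gives $[L_0^{\pR},Q]=0$, so the rotation unitaries $U_{\pR}(r)=\rme^{\rmi\theta L_0^{\pR}}$ commute with $Q$; consequently $\Ad U_{\pR}(r)$ maps $\dom(\delta)$ onto itself and commutes with $\delta$. Here $r$ ranges over the lift $\tilde{K}\subset\PSI$ of the rotation subgroup, and I note that replacing the geometric covariance representation by the inner one changes $U_{\pR}$ only by a character of $\PSI$, which is invisible to $\Ad$. Next, since $\mathcal{U}$ is open and contains $\unit\in P_I$, it contains all rotations $r$ of angle below some $\varepsilon>0$, for which the hypothesis yields $\pR(z(\rho,r))\in\dom(\delta)$. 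An arbitrary rotation is a product $r=r_1\cdots r_n$ of such small rotations, and iterating the cocycle identity gives
\[
\pR(z(\rho,r))=\pR(z(\rho,r_1))\prod_{k=2}^{n}\Ad U_{\pR}(r_1\cdots r_{k-1})\big(\pR(z(\rho,r_k))\big);
\]
each partial product $r_1\cdots r_{k-1}$ is again a rotation, so the rotation lemma places each factor in $\dom(\delta)$, whence so is the whole (finite) product. Thus $\pR(z(\rho,r))\in\dom(\delta)$ for all $r\in\tilde{K}$.

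To conclude the first assertion I would invoke the decomposition $\PSI=\tilde{K}\cdot P_I$: the rotation subgroup of $\PSL$ acts transitively on $\PSL/P_I\cong\S$, so $\PSL=K\cdot P_I$, and this lifts to the cover because $\ker(\PSI\to\PSL)$ lies inside $\tilde{K}$ while $P_I$ is simply connected. Writing a general $g=rp$ with $r\in\tilde{K}$ and $p\in P_I\subset\mathcal{U}$, the cocycle identity gives $\pR(z(\rho,g))=\pR(z(\rho,r))\cdot \Ad U_{\pR}(r)\big(\pR(z(\rho,p))\big)$: the first factor is in $\dom(\delta)$ by the previous step, and the second is the image under the domain-preserving map $\Ad U_{\pR}(r)$ of $\pR(z(\rho,p))\in\dom(\delta)$. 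Hence $\pR(z(\rho,g))\in\dom(\delta)$ for every $g\in\PSI$, i.e.\ $\rho\in\Delta^1_I$. For the ``in particular'' part I would check $P_I\subset\mathcal{U}_I$: any $g\in P_I$ fixes $p_{I'}$, so $p_{I'}\in\dot{g}\bar{I}\iff p_{I'}\in\bar{I}$, which fails since $p_{I'}\in I'$; as $P_I$ is connected and contains $\unit$ it lies in the identity component $\mathcal{U}_I$. Taking $\mathcal{U}=\mathcal{U}_I$ then gives one implication, while the converse is immediate because $\Delta^1_I$ demands the domain condition on all of $\PSI\supset\mathcal{U}_I$.

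The main obstacle, and the real content, is the passage from ``small'' to ``all'' rotations: it is exactly here that one must combine the commutation $[L_0,G_0]=0$ (to keep $\dom(\delta)$ stable under the intervening maps $\Ad U_{\pR}(r_1\cdots r_{k-1})$) with the multiplicativity of the cocycle and the Banach-algebra structure of $\dom(\delta)$. Everything else --- the geometric decomposition $\PSI=\tilde{K}\cdot P_I$ and the incidence check $P_I\subset\mathcal{U}_I$ --- is routine Lie-group bookkeeping. I would stress that only the rotations, not the dilation-translation directions of $P_I$ (for which $[L_{\pm 1},G_0]\neq 0$), preserve $\dom(\delta)$; this is precisely why the hypothesis must be imposed on the whole of $P_I$ rather than on a mere neighbourhood of the identity, the latter being recovered for free from openness.
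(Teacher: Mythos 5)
Your proof is correct and follows essentially the same route as the paper's: the Iwasawa-type decomposition $g=r(s)p$ with $p\in P_I$, the cocycle identity to factor $\pR(z(\rho,g))$, the commutation $[L_0^{\pR},Q]=0$ to see that $\Ad U_{\pR}(r)$ preserves $\dom(\delta)$, and the splitting of an arbitrary rotation into a product of small rotations lying in $\mathcal{U}$. Your explicit verification that $P_I\subset\mathcal{U}_I$ and your closing remark on why the hypothesis must cover all of $P_I$ are welcome additions, but the argument itself is the paper's.
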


\begin{proof} 
Let $\rho$ be a covariant endomorphism localized in the fixed interval $I$ and let ${\mathcal U}$ be an open subset of $\PSI$ containing 
$P_I$. Assume that $\pi_R (z(\rho,g)) \in \dom(\delta)$ for all $g\in {\mathcal U}$. Let $g$ be an arbitrary element of $\PSI$ and let 
$r(t)$, $t\in \R$, be (the lift to $\PSI$ of) the one-parameter subgroup of rotations. It easily follows from the Iwasawa decomposition of 
$\operatorname{SL}(2,\R)$ (see e.g. \cite[Appendix I]{FG}) that $g=r(s)p$ for some $s\in \R$ and $p\in P_I$. Accordingly 
 $z(\rho,g)=z(\rho,r(s) p) = z(\rho,r(s)) \alpha_{r(s)}(z(\rho,p))$ 
 and hence 
  $\pR(z(\rho,g))= \pR(z(\rho,r(s))) \rme^{\rmi sL_0^\pR}\pR((z(\rho,p)))\rme^{-\rmi sL_0^\pR}.$
 Now, $\pR((z(\rho,p))) \in \dom(\delta)$ by assumption and $\rme^{\rmi tL_0^\pR}$ commutes with $Q$ for every $t\in \R$.  Hence
 $\rme^{\rmi sL_0^\pR}\pR((z(\rho,p)))\rme^{-\rmi sL_0^\pR}$ belongs to $\dom(\delta)$.
 
On the other hand if $n$ is a sufficiently large positive integer then $r({s}/{n}) \in {\mathcal U}$ so that $\pR(z(\rho,r({s}/{n}))) \in \dom(\delta)$. Accordingly  
\begin{align*} 
 \pR(z(\rho,r(s)))=&\pR\Big( z(\rho,r(s/n)) \alpha_{r(s/n)}(z(\rho,r(s/n))) \dots \alpha_{r(s-s/n)}(z(\rho,r(s/n)))\Big)\\
=&\pR\big(z(\rho,r({s}/{n}))\big)\rme^{\rmi \frac{s}{n} L_0^\pR}\pR\big(z(\rho,r({s}/{n}))\big) \dots  \pR\big(z(\rho,r({s}/{n}))\big)\rme^{-\rmi(s- \frac{s}{n})L_0^\pR}
\end{align*}
lies in $\dom(\delta)$. Therefore $ \pR(z(\rho,g)) \in \dom(\delta)$ and the claim follows. 
\end{proof} 

The above proposition will be very useful in order to check that localized endomorphisms are differentiably transportable. 
This is because for $\rho$ localized in $I$ and $g \in {\mathcal U}_I$ there is an open interval $I_1 \subset \S \setminus \{ p_{I'}\}$ containing $\bar{I}\cup \dot{g}\bar{I}$ 
such that the explicit formula $z(\rho,g)=\iota_{I_1}(\pi^\gamma_0(z(\rho,g))) = \iota_{I_1}\left(U_{\pi^\gamma_0\circ \rho}(g)
U^\gamma(g)^* \right)$ holds, so that many computations become easier or possible at all.

\begin{proposition}\label{prop:indpR} 
Let $I \in \I_\R$, let $x\in \A^\gamma(I)$ and let  $\phi_{I}$ be a real smooth function with support in $\S\setminus \{-1\}$ and coinciding with $1$ on $I$. Then, $\pR(x) \in \dom(\delta)$ if and only if $x$ is in the domain of the derivation 
 $[G(\phi_{I}),\cdot ]$ and, in this case, $\delta(\pR(x))= \pR([G(\phi_{I}), x ])$. As a consequence,  for every $I \in \I$, the algebra
  $\{x \in \A^\gamma(I): \pR(x) \in \dom(\delta) \}$ does not depend on the choice of the Ramond representation $\pR$. Moreover, $\Delta^1_I$ does not depend on the choice of $\pR$.
 \end{proposition}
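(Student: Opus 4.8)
The plan is to reduce the whole statement to the single operator identity $[Q,\pR(x)]=\pR([G(\phi_I),x])$ and then to transfer boundedness between its two sides. Since $Q=G_0^\pR$ is the zero mode and the constant function $\mathbf 1$ on $\S$ has Fourier coefficients $\mathbf 1_r=\de_{r,0}$ (recall $r\in\Z$ in the Ramond case), we may write $Q=G^\pR(\mathbf 1)$. First I would fix $J\in\I_\R$ with $\bar I\cup\supp\phi_I\subset J$ and note that $G(\phi_I)$ is affiliated to $\A_{\SVir,c}(J)\subset\A(J)$, so that for $x\in\A^\gamma(I)$ the curve $s\mapsto \rme^{\rmi sG(\phi_I)}x\rme^{-\rmi sG(\phi_I)}$ stays in $\A(J)$; then $x\in\dom([G(\phi_I),\cdot])$ means $[G(\phi_I),x]$ extends to a bounded operator, equivalently this curve is norm-differentiable, with derivative $\rmi[G(\phi_I),x]\in\A(J)$ at $s=0$. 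Applying the naturality relation $\pR(\rme^{\rmi sG(\phi_I)})=\rme^{\rmi sG^\pR(\phi_I)}$ of Proposition \ref{prop:CFT-supersymmetric} (valid because $\supp\phi_I\subset\S\setminus\{-1\}$) together with the homomorphism property of $\pR$ gives $\rme^{\rmi sG^\pR(\phi_I)}\pR(x)\rme^{-\rmi sG^\pR(\phi_I)}=\pR(\rme^{\rmi sG(\phi_I)}x\rme^{-\rmi sG(\phi_I)})$, and differentiating yields $[G^\pR(\phi_I),\pR(x)]=\pR([G(\phi_I),x])$ whenever $x\in\dom([G(\phi_I),\cdot])$.

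The crux, which I expect to be the main obstacle, is the locality identity $[G^\pR(\mathbf 1-\phi_I),\pR(x)]=0$, yielding $[Q,\pR(x)]=[G^\pR(\phi_I),\pR(x)]$. I would first take $\phi_I\equiv 1$ on a neighborhood $N$ of $\bar I$ (the general $\phi_I$ being handled by the independence statement below), so $\mathbf 1-\phi_I$ is supported in $\S\setminus N$. Writing $\mathbf 1-\phi_I$ as a finite sum of smooth functions each supported in a small interval disjoint from $\bar I$ (possible since $\mathbf 1-\phi_I$ vanishes on $N$), it suffices to treat $h$ supported in a small interval $K$ with $\bar K\cap\bar I=\emptyset$. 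If $-1\notin\bar K$ then $K\in\I_\R$, $\rme^{\rmi sG^\pR(h)}=\pR(\rme^{\rmi sG(h)})$ with $\rme^{\rmi sG(h)}\in\A(K)$, and since $x$ is \emph{even} it commutes with $\A(K)$ by graded Haag duality (the operator $Z$ commutes with even elements); differentiating gives $[G^\pR(h),\pR(x)]=0$. If $-1\in\bar K$, then since $K$ is small I would rotate: because $[L_0^\pR,G_0^\pR]=0$, $\rme^{\rmi tL_0^\pR}$ commutes with $Q$ and carries $G^\pR(h)$ to $G^\pR(R_th)$ (the weight-$\tfrac32$ rotate, supported in $R_tK$, where $R_t:=\dot r(t)$) and $\pR(x)$ to $\pR(\alpha_{r(t)}x)$ with $\alpha_{r(t)}x\in\A^\gamma(R_tI)$; for $t$ just exceeding the half-width of $K$ one gets $-1\notin R_tK$ and $-1\notin\overline{R_tI}$, reducing to the previous case, and conjugating back gives the vanishing.

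With $[Q,\pR(x)]=[G^\pR(\phi_I),\pR(x)]$ established as operators on the core $\Cci(L_0^\pR)$, the biconditional follows from faithfulness of $\pR$ on $\A(J)$. Since $\A(J)$ is a type $III_1$ factor, the normal nonzero representation $\pR|_{\A(J)}$ is faithful, hence isometric onto its image. Therefore $s\mapsto\rme^{\rmi sG(\phi_I)}x\rme^{-\rmi sG(\phi_I)}$ is norm-differentiable iff its isometric $\pR$-image is, i.e. $x\in\dom([G(\phi_I),\cdot])$ iff $\pR(x)\in\dom([G^\pR(\phi_I),\cdot])$, which by the locality identity coincides with $\pR(x)\in\dom(\delta)$; and in that case $\delta(\pR(x))=[Q,\pR(x)]=\pR([G(\phi_I),x])$.

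For the independence statements, when $I\in\I_\R$ the condition $\pR(x)\in\dom(\delta)$ is equivalent to the manifestly $\pR$-independent condition $x\in\dom([G(\phi_I),\cdot])$, a statement in the vacuum representation, so $\{x\in\A^\gamma(I):\pR(x)\in\dom(\delta)\}$ does not depend on $\pR$. For general $I\in\I$ I would again use that $\rme^{\rmi tL_0^\pR}$ commutes with $Q$, hence $\Ad\rme^{\rmi tL_0^\pR}$ preserves $\dom(\delta)$; as it implements $\alpha_{r(t)}$ in $\pR$, we have $\pR(x)\in\dom(\delta)$ iff $\pR(\alpha_{r(t)}x)\in\dom(\delta)$, and choosing $t$ with $R_tI\in\I_\R$ reduces to the previous case. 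The independence of $\Delta^1_I$ is then immediate: by Proposition \ref{prop:gen2-Delta} membership $\rho\in\Delta^1_I$ may be tested on $g\in\mathcal U_I$, where $z(\rho,g)$ lies in a local algebra $\A^\gamma(I_1)$, so $\pR(z(\rho,g))\in\dom(\delta)$ is a $\pR$-independent condition by what precedes.
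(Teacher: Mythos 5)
Your proof is correct and follows essentially the same route as the paper: everything is reduced to the identity $\delta(\pR(x))=\pR([G(\phi_I),x])$ by localizing the supercharge, i.e.\ replacing $Q=G_0^\pR=G^\pR(\mathbf 1)$ by $G^\pR(\phi_I)$ modulo an operator commuting with $\pR(\A^\gamma(I))$, and the two independence statements then follow from rotation covariance and Proposition \ref{prop:gen2-Delta} exactly as in the paper. The only real difference is that the paper imports the key identity $[G_0^\pR,\pR(x)]=[G^{\pR}(\phi_I),\pR(x)]$ from \cite[Sect.~5]{CHKL} whereas you prove it from scratch via graded Haag duality and a rotation trick; just note that your partition-of-unity step requires $1-\phi_I$ to vanish on a neighbourhood of $\bar I$, and the remaining case where $\supp(1-\phi_I)$ touches $\partial I$ is settled by outer regularity of the net (so that $\rme^{\rmi sG(h)}\in\A(I')$ whenever $h$ vanishes on $I$), not by the $\pR$-independence statement as your parenthetical suggests.
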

 
 \begin{proof} Let $I \in \I_\R$, $x\in \A^\gamma(I)$ and let  $\phi_{I}$ be a real smooth function with support in $I_0\subset\S\setminus \{-1\}$ and coinciding with $1$ on $I$. According to \cite[Sect.5]{CHKL} on localized implementations of the canonical superderivation and together with  Proposition \ref{prop:CFT-supersymmetric} and the local normality of $\pi_R$ we have
\begin{align*}
 [G^\pR_0,\pR(x))] =& [G^{\pR}(\phi_{I}),\pR(x)]
= \frac{\rmd}{\rmi\rmd t} \Ad(\rme^{\rmi t G^{\pR}(\phi_{I})})(\pR(x)) \big|_{t=0}\\
=&\frac{\rmd}{\rmi\rmd t} \pR\big(\Ad(\rme^{\rmi t G(\phi_{I})})(x)\big) \big|_{t=0} \\
=& \pR \circ \iota_{I_0} ([G(\phi_{I}), x]).
\end{align*}
As a consequence, for every $I \in \I_\R$, the algebra   $\{x \in \A^\gamma(I): \pR(x) \in \dom(\delta) \}$ does not depend on the choice of 
$\pR$ and the same is true for an arbitrary $I \in \I$ as a consequence of covariance and the fact that $G^\pR_0$ is invariant under rotations. 

Now let $\rho$ be a covariant endomorphism of $C^*(\A^\gamma)$ localized in an interval $I\in \I$. Then, by Proposition \ref{prop:gen2-Delta}, 
$\rho \in \Delta^1_I$ if and only if $\pR(z(\rho,g)) \in \dom (\delta)$ for all $g\in \mathcal{U}_I$. Now, given $g \in \mathcal{U}_I$, there is an interval $I_1 \in \I$ such that $\pi_0^\gamma(z(\rho,g)) \in \A^\gamma(I_1)$. Accordingly  $\pR(z(\rho,g)) \in \dom (\delta)$ if and only if 
$\pi_0^\gamma(z(\rho,g)) \in \{x \in \A^\gamma(I_1): \pR(x) \in \dom(\delta) \}$, a condition that does not depend on the choice of $\pR$. 
It follows that $\Delta^1_I$ does not depend on the choice of $\pR$. \end{proof}

\begin{remark}\label{remark:gradedpR}
In view of Proposition \ref{prop:CKL22}, in later applications where a graded $\pR$ is given and $\hat{\gamma}$ is an explicit fixed localized endomorphism representing the dual of the grading, we may assume (after replacing $\pR$ with a unitarily equivalent representation) that $\pR |_{C^*(\A^\gamma)}= \pi_{R,+}\oplus \pi_{R,+}\circ \hat{\gamma}$. As well known, without loss of generality, we may also assume that 
$\hat{\gamma}^2 = \id$.
\end{remark} 

We keep on record the following consequence of the cocycle identity:
\begin{equation}\label{eq:gen2-alpha-z}
 \pi_R(\alpha_h(z(\rho,g))) = \pi_R(z(\rho,h)^*)\pR(z(\rho,hg))\in\dom(\delta),\quad h,g\in\PSI.
\end{equation}

Recalling from \cite[Sect.2\&A]{Lo97} the concept of tensor C*-categories we can exhibit the following structure of $\Delta_I^1$:

\begin{proposition}\label{prop:gen2-tenscat}
The subset of $\Delta_I^1$ consisting of differentiably transportable endomorphisms with finite statistical dimension is closed under composition and conjugates. Moreover, the braiding operators $\eps(\rho,\sigma)$ lie in $\pR^{-1}(\dom(\delta))$. In particular $\Delta^1_I$ with morphisms the corresponding intertwiners in $\A^\gamma(I)$ forms a braided tensor C*-category with conjugates and simple unit.
\end{proposition}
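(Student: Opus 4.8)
The plan is to reduce every assertion to two tools already at hand: Proposition~\ref{prop:gen2-Delta}, by which $\pi_R(z(\cdot,g))\in\dom(\delta)$ needs to be checked only for $g$ in the neighbourhood $\mathcal{U}_I$ of $P_I$, and Proposition~\ref{prop:indpR}, by which, for $w\in\A^\gamma(J)$, the smoothness condition $\pi_R(w)\in\dom(\delta)$ is equivalent to the representation-independent condition $w\in\dom([G(\phi_J),\cdot])$. I shall use repeatedly that $\dom(\delta)$ is a Banach $*$-algebra, so that smoothness is preserved under sums, products and adjoints, and that $z(\id,g)=\unit$, whence $\id\in\Delta^1_I$ is a smooth object and, by irreducibility of the vacuum representation of $\A^\gamma$, a simple tensor unit.

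For closure under composition let $\rho,\sigma\in\Delta^1_I$ have finite statistical dimension; then $\rho\sigma$ is localized in $I$, is covariant, and by finiteness satisfies the two-variable cocycle identity \eqref{twovarcocycle2}, $z(\rho\sigma,g)=\rho(z(\sigma,g))\,z(\rho,g)$. The factor $z(\rho,g)$ is smooth because $\rho\in\Delta^1_I$, so it remains to treat $\rho(z(\sigma,g))$. Restricting to $g\in\mathcal{U}_I$ via Proposition~\ref{prop:gen2-Delta}, we have $z(\sigma,g)\in\A^\gamma(I_1)$ for a fixed interval $I_1\supset\bar I$ avoiding $p_{I'}$, and since $\rho$ is localized in $I\subset I_1$ also $\rho(z(\sigma,g))\in\A^\gamma(I_1)$. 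By Proposition~\ref{prop:indpR} the smoothness of $z(\sigma,g)$ and of $\rho(z(\sigma,g))$ are both the representation-independent condition of lying in $\dom([G(\phi_{I_1}),\cdot])$, so the whole matter rests on the master lemma that a finite-index localized endomorphism maps the local smooth algebra $\A^\gamma(I_1)\cap\dom([G(\phi_{I_1}),\cdot])$ into itself. Granting this, $\pi_R(z(\rho\sigma,g))$ is a product of smooth elements and $\rho\sigma\in\Delta^1_I$.

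Closure under conjugates and smoothness of the braiding reduce to the same lemma together with the naturality $T\,z(\rho,g)=z(\sigma,g)\,\alpha_g(T)$ of the cocycle under an intertwiner $T\in\Hom(\rho,\sigma)$. For finite-dimensional $\rho$ I would choose a conjugate $\bar\rho$ localized in $I$ and solutions $R\in\Hom(\id,\bar\rho\rho)$, $\bar R\in\Hom(\id,\rho\bar\rho)$ of the conjugate equations; these lie in $\A^\gamma(I)$ and, by Proposition~\ref{prop:indpR}, may be taken smooth. Feeding $\bar R$ into the naturality relation, $\bar R=z(\rho\bar\rho,g)\,\alpha_g(\bar R)$, and into \eqref{twovarcocycle2} expresses $z(\bar\rho,g)$ through the smooth data $z(\rho,g)$, $R$, $\bar R$ and their covariance translates, each smooth by Proposition~\ref{prop:indpR}, \eqref{eq:gen2-alpha-z} and the master lemma; hence $\bar\rho\in\Delta^1_I$. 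Finally $\varepsilon(\rho,\sigma)$ is, up to the smooth intertwiners above, a product of a charge transporter $z(\sigma,g)$ with its image $\rho(z(\sigma,g))$ (taking $g$ so that $\dot g I$ is disjoint from $I$); both factors are smooth by the composition argument, so $\varepsilon(\rho,\sigma)\in\pi_R^{-1}(\dom(\delta))$.

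It then remains to assemble the braided tensor C*-category. Its objects are the finite-dimensional elements of $\Delta^1_I$ and its arrows the intertwiners in $\A^\gamma(I)$; the tensor product of arrows $T\otimes S:=T\,\rho(S)$, for $T\in\Hom(\rho,\rho')$ and $S\in\Hom(\sigma,\sigma')$, again lies in $\A^\gamma(I)$ because $\rho$ is localized in $I$, while the associativity and unit constraints, the conjugation and the braiding are inherited from the ambient DHR tensor C*-category of localized covariant endomorphisms of $C^*(\A^\gamma)$, the preceding paragraphs guaranteeing that the object set is closed under composition and conjugation and that $\varepsilon$ remains inside it; the unit $\id$ is simple since its self-intertwiners are scalars. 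I expect the single genuine obstacle to be the master lemma, that a localized endomorphism of finite statistical dimension preserves the local smooth subalgebra $\dom([G(\phi_{I_1}),\cdot])$: here the representation-independence of Proposition~\ref{prop:indpR} is what renders the statement intrinsic to $C^*(\A^\gamma)$, whereas finiteness of the statistical dimension enters decisively, both through the cocycle identity \eqref{twovarcocycle2} and through the existence of smooth solutions $R,\bar R$ of the conjugate equations.
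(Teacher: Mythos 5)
Your reduction of everything to the smoothness of the cocycles $z(\cdot,g)$ for $g$ near $P_I$ (via Proposition~\ref{prop:gen2-Delta}) and to the representation-independent local criterion of Proposition~\ref{prop:indpR} is the right frame, and your treatment of composition and of the braiding matches the paper's. But you leave the ``master lemma'' as an acknowledged unproven step, and in fact it needs no separate lemma and no finite index: for $x\in\A^\gamma(I_1)$ one writes $\rho(x)=\Ad(z(\rho,h))\circ\rho^h(x)=\Ad(z(\rho,h))(x)$ with $h$ chosen so that $\rho^h$ is localized away from $I_1$, and since $z(\rho,h)\in\pR^{-1}(\dom(\delta))$ and $\dom(\delta)$ is an algebra, $\rho$ preserves the smooth subalgebra. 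This is exactly how the paper handles both composition and the braiding; transportability, not finite statistical dimension, is what makes the argument work. As it stands your proposal reduces the proposition to a claim you do not establish and whose difficulty you misattribute.

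The conjugation step is where you have a genuine gap. You assert that the solutions $R\in\Hom(\id,\bar\rho\rho)$, $\bar R\in\Hom(\id,\rho\bar\rho)$ ``may be taken smooth'' by Proposition~\ref{prop:indpR}; that proposition only says that the local smooth subalgebra does not depend on the choice of $\pR$ — it does not put any particular intertwiner into $\pR^{-1}(\dom(\delta))$. The paper is explicitly careful about precisely this point: differentiability of intertwiners between localized endomorphisms is \emph{not} automatic (see the discussion preceding Proposition~\ref{prop:gen2-tau1}), so this step cannot be waved through. Moreover, even granting smooth $R,\bar R$, your plan to extract $z(\bar\rho,g)$ from $\bar R=z(\rho\bar\rho,g)\,\alpha_g(\bar R)$ together with \eqref{twovarcocycle2} is not carried out and runs into the problem that $\bar R$ is only an isometry, so these relations determine $z(\rho\bar\rho,g)$ only on the range of a projection. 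The paper avoids all of this by choosing the explicit representative $\bar\rho=\alpha_{r_{I_1}}\circ\rho\circ\alpha_{r_{I_1}}$, computing $z(\bar\rho,g)=\alpha_{r_{I_1}}(z(\rho,g^r))$ from Bisognano--Wichmann, and verifying $\pR(z(\bar\rho,g))\in\dom(\delta)$ directly from the relation $\hat{J}_{I_1}G(\phi_{I_0})\hat{J}_{I_1}^*=-G(\phi_{I_0})$ for a symmetric cutoff function $\phi_{I_0}$. Some such explicit choice of conjugate, or a genuine argument for the smoothness of $R,\bar R$, is needed to close your proof.
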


\begin{proof}
\emph{Composition}. Given $\rho,\sigma\in\Delta^1_I$, choose $h\in\PSI$ such that $\dot{h}\bar{I}\cap \bar{I}=\emptyset$. Then $\rho^h$ acts trivially on $\A(I)$, so owing to Proposition \ref{prop:CFT-endoms2} we have
\[
 z(\rho\sigma,g)=  \rho(z(\sigma,g)) z(\rho,g) 
= \Ad(z(\rho,h))\circ \rho^h (z(\sigma,g)) z(\rho,g) =\Ad(z(\rho,h))(z(\sigma,g))  z(\rho,g)  ,
\]
which is a product of elements in $\pR^{-1}(\dom(\delta))$. 

\emph{Conjugation}. Suppose that $\rho$ is localized in $I$. Let $I_1 \in \I$ be the interval with boundary points $p_I$ and $p_{I'}$ (the middle points of $I$ and $I'$ respectively) in anti-clockwise order so that $r_{I_1}$ is the $\S$-reflection fixing 
$p_I$ and $p_{I'}$.  Choose an arbitrary $g$ in the open set ${\mathcal U}_I$ defined in Proposition 
\ref{prop:gen2-Delta}. Then $z(\rho,g)\in\A^\gamma(I_0)$ with a certain open interval $I_0 \subset \S \setminus \{ p_{I'}\}$ 
containing $\bar{I}\cup \dot{g}\bar{I}$. Since $r_{I_1}$ fixes $p_{I'}$ we can also assume that $I_0 =r_{I_1} I_0$ so that 
$\bar{I}\cup \dot{g}\bar{I}\cup r_{I_1}\dot{g}\bar{I} \subset I_0$. 

Let $\alpha_{r_{I_1}}$ be the antilinear automorphism of $C^*(\A^\gamma)$ implementing the reflection $r_{I_1}$ (it exists and is unique because of the universal property of $C^*(\A^\gamma)$).  We recall from \cite[Sect.8]{GL1}  and \cite[Sect.2]{GL} the formula 
$\bar{\rho} := \alpha_{r_{I_1}} \circ \rho\circ \alpha_{r_{I_1}}$ for an explicit choice of representative of the conjugate sector. By the Bisognano-Wichmann property (2) in Section \ref{sec:CFT}, the anti-unitary $U(r_{I_1})$ representing the reflection $r_{I_1}$ on $\H$ equals $\hat{J}_{I_1}:= ZJ_{I_1} $, where $J_{I_1}$ is the modular conjugation of the local algebra $\A(I_1)$ with respect to the vacuum vector $\Omega$. Similarly, the anti-unitary $U^\gamma(r_{I_1})$ representing the reflection $r_{I_1}$ on $\H^\Gamma$ coincides with the modular conjugation $J^\gamma_{I_1}$, of the local algebra $\A^\gamma(I_1)$ on $\H^\Gamma$ with respect to the vacuum vector $\Omega$. Note that $J^\gamma_{I_1}=J_{I_1}|_{\H^\Gamma}=\hat{J}_{I_1}|_{\H^\Gamma}$.
Then a straight-forward computation based on the covariance of $\rho$ and $\bar{\rho}$ shows that 
$U_{\pi_0^\gamma \circ \bar{\rho}}(g)= J^\gamma_{I_1} U_{\pi_0^\gamma \circ \rho}(g^r) J^\gamma_{I_1}$, with 
$g^r:=r_{I_1} g r_{I_1}$, so

\begin{align*}
z(\bar{\rho},g) &= \iota_{I_0}\left(U_{\pi_0^\gamma \circ \bar{\rho}}(g)U^\gamma(g)^*\right) 
= \iota_{I_0}\left(J^\gamma_{I_1}U_{\pi_0^\gamma \circ \rho}(g^r)J^\gamma_{I_1} U^\gamma(g)^*\right)\\
&=  \iota_{I_0}\left(J^\gamma_{I_1}U_{\pi_0^\gamma \circ \rho}(g^r)U^\gamma(g^r)^*J^\gamma_{I_1}  \right)
= \alpha_{r_{I_1}}(z(\rho,g^r)) \in \A^\gamma(r_{I_1} I_0) = \A^\gamma(I_0).
\end{align*}

By Proposition \ref{prop:indpR}, the derivation $\delta = [G_0^\pR,\cdot]$ restricted to $\pR(\A(I_0))$ comes from a derivation in the vacuum representation, given by the commutator with $G(\phi_{I_0})$, where $\phi_{I_0}$ is any smooth function compactly supported in some proper open interval of $\S$ and satisfying $\phi_{I_0}|_{I_0}=1$. We choose this function symmetric so that $\phi_{I_0}\circ r_{I_1}= \phi_{I_0}$. Then $\hat{J}_{I_1}G(\phi_{I_0})\hat{J}_{I_1}^* = - G(\phi_{I_0}\circ r_{I_1}) = - G(\phi_{I_0})$ which follows from an adaptation of \cite[Sect.3]{BSM} to fermionic fields. On the even subnet, we clearly have
 $\Ad(\hat{J}_{I_1})|_{\A^\gamma}=\Ad(J_{I_1})|_{\A^\gamma}$. Hence, for every $\psi_1,\psi_2\in\Cci(L_0)$,
\begin{align*}
 \langle \psi_1,G(\phi_{I_0}) z(\bar{\rho},g)\psi_2\rangle &= \langle \psi_1,G(\phi_{I_0}) \hat{J}_{I_1}z(\rho,g^r) \hat{J}_{I_1}^*\psi_2\rangle\\
&= - \langle \psi_1,\hat{J}_{I_1} G(\phi_{I_0})  z(\rho,g^r) \hat{J}_{I_1}^* \psi_2\rangle\\
&=  - \langle \psi_1,\hat{J}_{I_1} z(\rho,g^r) G(\phi_{I_0})\hat{J}_{I_1}^* \psi_2\rangle 
- \langle \psi_1,\hat{J}_{I_1} [G(\phi_{I_0}),z(\rho,g^r)] \hat{J}_{I_1}^* \psi_2\rangle \\
&=  \langle \psi_1,z(\bar{\rho},g)  G(\phi_{I_0}) \psi_2\rangle  
- \langle \psi_1,\hat{J}_{I_1} [G(\phi_{I_0}),z(\rho,g^r)] \hat{J}_{I_1}^* \psi_2\rangle.
\end{align*}
Using again Proposition \ref{prop:indpR}, we thus obtain $\pi_R(z(\bar{\rho},g)) \in\dom(\delta)$ with
\[
\delta(\pR(z(\bar{\rho},g)) )=\pR([G(\phi_{I_0}) z(\bar{\rho},g)]) 
= - \pR(\hat{J}_{I_1} [G(\phi_{I_0}),z(\rho,g^r)] \hat{J}_{I_1}^*).
\]

\emph{Braiding.} Let $\rho,\sigma\in\Delta_I^1$ and let $I_0 \in \I$ be an interval containing the closure of $I$. 
Consider $g_\pm\in\PSI$ such that $\dot{g}_\pm I$ is localized on the left (right) of $I$ inside $I_0$ and $\dot{g}_\pm\bar{I}\cap \bar{I}=\emptyset$, respectively. Then $z(\rho,g_\pm)\in\A^\gamma(I_0)$, and the braiding operator is given by \cite[(2.2)]{Reh}
\[
\eps^\pm(\sigma,\rho) = \Ad(z(\rho,g_\pm)^* )\sigma(z(\rho,g_\pm)) \in \A^\gamma(I_0).
\]
Since $I_0 \supset \overline{I}$ was arbitrary, we have $\eps^\pm(\sigma,\rho) \in \A^\gamma(I)$ by outer regularity (the same result can be obtained by using Haag duality and the fact that $\eps^\pm(\sigma,\rho)$ intertwines $\sigma\rho$ and $\rho\sigma$). 
Now let $h \in \PSI$ be such that $\dot{h}I \cap I = \emptyset$. Then 
\[
\sigma(z(\rho,g_\pm)) = \Ad(z(\rho,h))\circ \sigma^h (z(\rho,g_\pm))=\Ad(z(\rho,h))(z(\rho,g_\pm)).
\]
Hence, since $\rho,\sigma\in\Delta^1$,  $\pR(\eps(\rho,\sigma)^\pm)$ is a product of operators in $\dom(\delta)$, so $\pR(\eps(\rho,\sigma)^\pm)\in\dom(\delta)$. 
\end{proof}

We can sharpen the statement about conjugates in the following special situation:

\begin{proposition}\label{prop:gen2-invert}
If $\rho\in\Delta_I^1$ is an automorphism, then $\rho^{-1}\in\Delta_I^1$, \ie every $\Delta^1_I$ is closed under inverses if they exist as endomorphisms of $C^*(\A^\gamma)$.
\end{proposition}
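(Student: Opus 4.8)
The plan is to verify directly that $\rho^{-1}$ meets the defining condition of Definition \ref{def:gen2-Delta}, using Proposition \ref{prop:gen2-Delta} to localize the problem and exploiting the fact that $\rho$ is \emph{locally} implemented by a smooth charge transporter. First I would record the structural facts: since $\rho \in \Delta^1_I$ is a C*-automorphism, its statistical dimension equals $1$ by Proposition \ref{prop:CFT-endoms}, and $\rho^{-1}$ is again a $\PSL$-covariant endomorphism localized in $I$ with $d(\rho^{-1}) = 1$ (exactly as in the proof of that proposition). In particular $\rho^{-1} \in \Delta^0_I$, so by the last assertion of Proposition \ref{prop:gen2-Delta} it suffices to show $\pi_R(z(\rho^{-1}, g)) \in \dom(\delta)$ for all $g \in {\mathcal U}_I$. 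Applying the two-variable cocycle identity \eqref{twovarcocycle2} to $\rho^{-1}\rho = \id$ — legitimate because $d(\rho) = d(\rho^{-1}) = 1 < \infty$ — and using $z(\id, g) = \unit$, I obtain $\unit = \rho^{-1}(z(\rho,g))\, z(\rho^{-1}, g)$, hence the clean formula
\[
z(\rho^{-1}, g) = \rho^{-1}\big(z(\rho, g)^*\big), \qquad g \in \PSI .
\]

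Now fix $g \in {\mathcal U}_I$. Then $z(\rho, g)$, and hence $z(\rho, g)^*$, lies in $\A^\gamma(I_1)$ for an interval $I_1 \subset \S\setminus\{p_{I'}\}$ with $\bar I \cup \dot g \bar I \subset I_1$, and $\pi_R(z(\rho,g)^*) \in \dom(\delta)$ because $\rho \in \Delta^1_I$ and $\dom(\delta)$ is a $*$-algebra. The heart of the argument is to see that $\rho^{-1}$ preserves the smooth local algebra $\{y \in \A^\gamma(I_1): \pi_R(y) \in \dom(\delta)\}$. To this end I would choose $h \in \PSI$ with $\overline{\dot h I} \subset I_1'$; then $\rho^h$ is localized in $\dot h I$ and acts trivially on $\A^\gamma(I_1)$, so \eqref{eqcovcocycle1} gives $\rho|_{\A^\gamma(I_1)} = \Ad(z(\rho, h))|_{\A^\gamma(I_1)}$. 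Since $\rho$ restricts to an automorphism of $\A^\gamma(I_1)$, inverting this identity of automorphisms of $\A^\gamma(I_1)$ yields $\rho^{-1}|_{\A^\gamma(I_1)} = \Ad(z(\rho,h)^*)|_{\A^\gamma(I_1)}$. Consequently
\[
z(\rho^{-1}, g) = \rho^{-1}(z(\rho,g)^*) = z(\rho,h)^*\, z(\rho,g)^*\, z(\rho,h),
\]
a product of three elements of $\pi_R^{-1}(\dom(\delta))$: indeed $\pi_R(z(\rho,h)) \in \dom(\delta)$ for the chosen $h$ (again because $\rho \in \Delta^1_I$, where the domain condition is imposed for \emph{all} group elements), and $\dom(\delta)$ is an algebra. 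Thus $\pi_R(z(\rho^{-1},g)) \in \dom(\delta)$, and Proposition \ref{prop:gen2-Delta} gives $\rho^{-1} \in \Delta^1_I$.

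The step I expect to be the main obstacle is the smoothness of $\rho^{-1}$: a priori a localized automorphism need not preserve $\dom(\delta)$, so one cannot naively push $\rho^{-1}$ through the formula $z(\rho^{-1},g) = \rho^{-1}(z(\rho,g)^*)$. The resolution is precisely the observation that $\rho \in \Delta^1_I$ forces $\rho$ to be \emph{locally} implemented by the smooth unitary $z(\rho,h)$ once the localization is transported off $I_1$; its adjoint then implements $\rho^{-1}$ locally, so $\rho^{-1}$ inherits the same smoothness. One should take care that $h$ (and hence $I_1$) may depend on $g$, and that $z(\rho,h)$ need not itself be localized in a single interval — but only the two properties $\pi_R(z(\rho,h)) \in \dom(\delta)$ and $\rho|_{\A^\gamma(I_1)} = \Ad(z(\rho,h))|_{\A^\gamma(I_1)}$ are actually used, and both hold.
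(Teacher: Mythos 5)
Your proposal is correct and follows essentially the same route as the paper's proof: both derive, via the two-variable cocycle identity applied to $\id=\rho\rho^{-1}$ (resp.\ $\rho^{-1}\rho$) together with the local implementation $\rho|_{\A^\gamma(I_1)}=\Ad(z(\rho,h))$, the identical formula $z(\rho^{-1},g)=\Ad(z(\rho,h)^*)(z(\rho,g)^*)$, exhibiting it as a product of elements of $\pi_R^{-1}(\dom(\delta))$ before invoking Proposition \ref{prop:gen2-Delta}. Your additional care about the covariance of $\rho^{-1}$ and the justification that $\rho^{-1}$ is locally implemented by $z(\rho,h)^*$ is sound and merely makes explicit what the paper leaves implicit.
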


In other words, apart from the possible choice of representative $\Ad(U(r_I))\circ \rho\circ \Ad(U(r_I))$ of the conjugate of $\rho$, one may also choose $\rho^{-1}$.

\begin{proof}
For $g\in {\mathcal U}_I$, we have $z(\rho,g)\in \A^\gamma(I_0)$ with suitable $I_0\in\I$. Recall that locally, and in particular on $\A^\gamma(I_0)$, $\rho$ is implemented by a local unitary $z(\rho,h)$, with a suitable $h\in\PSI$ depending on $I_0$. Hence, for any such $g$ and corresponding $h$, we have
\[
 \unit = z(\id,g) = z(\rho,g)\rho(z(\rho^{-1},g)) = z(\rho,g)\Ad(z(\rho,h))(z(\rho^{-1},g)),
\]
which implies $z(\rho^{-1},g)= \Ad(z(\rho,h)^*)(z(\rho,g)^*) \in \pi_R^{-1}(\dom(\delta))$, as a product of elements in $\pi_R^{-1}(\dom(\delta))$. Applying then Proposition \ref{prop:gen2-Delta}, we get $\rho^{-1}\in\Delta_I^1$.
\end{proof}

Let us turn now to the definition of the differentiable algebra, based on a given unital subset $\Delta\subset\Delta^0$, not necessarily in $\Delta^1$, and recall that we use the same symbol for the normal extension to $W^*(\A^\gamma)$ of an endomorphism or a representation of $C^*(\A^\gamma)$.

\begin{definition}\label{def:gen2-AA}
Given a superconformal net $\A$ with a (supersymmetric) Ramond representation $(\pi_R,\H_R)$ and supercharge $Q$, and given a subset $\Delta\subset \Delta^0$ of localized endomorphisms of $C^*(\A^\gamma)$ which contains the identity automorphism $\id$, the associated \emph{$\Delta$-$\delta$-differentiable algebra} is the $*$-algebra given by
\[
 \AA_{\Delta} := \{ x\in W^*(\A^\gamma) :\; (\forall \rho\in\Delta)\; \pi_R\circ\rho (x) \in \dom(\delta) \}
\]
and the corresponding local subalgebras are given by $\AA_{\Delta}(I) := \AA_{\Delta} \cap \A^\gamma(I)$. Endowed with the family of norms
\[
 \|\cdot\|_\rho := \|\cdot\|_{W^*(\A^\gamma)} + \|\delta(\pi_R\circ\rho(\cdot))\|_{B(\H_R)},\quad \rho\in\Delta,
\]
 $\AA_{\Delta}$ becomes a locally convex $*$-algebra. 
\end{definition}

\begin{remark}\label{rem:gen2-AA}
(1) If $\Delta$ actually forms a semi-group, \ie $\rho\sigma\in \Delta$ for all $\rho,\sigma\in\Delta$, then $\AA_\Delta$ is globally \emph{invariant} under $\Delta$: for $\sigma\in\Delta$ and $x\in \AA_\Delta$, we have $\pR\circ\rho (\sigma(x)) = \pR\circ(\rho\sigma)(x) \in \dom(\delta)$ for all $\rho \in \Delta$, so $\sigma(\AA_\Delta)\subset \AA_\Delta$ and $\sigma:\AA_\Delta \to \AA_\Delta$ is continuous (with respect to the locally convex topology on $\AA_\Delta$). 

(2) Since we are only interested in locally normal representations of $\A^\gamma$, we could consider the corresponding locally normal C*-algebra $C^*_{\operatorname{ln}}(\A^\gamma)$ instead of $C^*(\A^\gamma)$, which is characterized by the corresponding universal property in Definition \ref{def:CFT-univC} for locally normal representation only. It was explicitly  defined in \cite{CCHW} and proven to be $\sigma$-weakly closed 
in the universal locally normal representation if $\A^\gamma$ is completely rational. As a consequence, in the latter case, $\AA_\Delta$ may alternatively be chosen as a subalgebra of $C^*_{\operatorname{ln}}(\A^\gamma)$. Although a wide range of models including a relevant part of those studied in Section \ref{sec:loop} are known to fall into this class, $W^*(\A^\gamma)$ does not cause any particular difficulties and we do not want to become too restrictive wherefore we continue here with the general setting above.
\end{remark}

\begin{theorem}\label{theorem:gen2-AAST}
Given $\A$, $\pR$ and $\Delta$ as above, $(\AA_\Delta,(\pR\circ \rho,\H_R),Q)$ is a $\theta$-summable spectral triple, for every $\rho\in\Delta$. It is even if $\pR$ is graded and odd if $\pR$ is ungraded. Moreover, the representation $\pR\circ \rho$ of the locally convex algebra $\AA_\Delta$ into the Banach algebra $\dom(\delta)$ is continuous.
\end{theorem}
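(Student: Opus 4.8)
The plan is to check the three defining conditions of a $\theta$-summable spectral triple from Definition \ref{def:NC-STs} in turn, to read off the even/odd dichotomy from the grading of $\pR$, and finally to obtain continuity of $\pR\circ\rho$ by a one-line estimate against the seminorms defining $\AA_\Delta$. Almost all of the substance has already been prepared in the earlier propositions, so the argument is chiefly one of assembling the pieces.

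First I would settle $\theta$-summability. By Definition \ref{def:gen2-AA}, $\AA_\Delta$ is a unital $*$-algebra, and since $\rho$ extends to a (unital, normal) $*$-endomorphism of $W^*(\A^\gamma)$ while $\pR$ is a $*$-representation, $\pR\circ\rho$ is a unital $*$-representation of $\AA_\Delta$ on $\H_R$. The operator $Q=G_0^\pR$ is selfadjoint by Proposition \ref{prop:CFT-supersymmetric}, which also yields $Q^2=L_0^\pR-\frac{c}{24}\unit$; hence $\rme^{-tQ^2}=\rme^{tc/24}\rme^{-tL_0^\pR}$ is trace-class for every $t>0$ by the trace-class part of the Standing Assumption \eqref{eq:gen2-trace-class} (which incidentally forces $L_0^\pR$ to have discrete spectrum with finite multiplicities, so that $\H_R$ is separable). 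Finally, $\pR\circ\rho(\AA_\Delta)\subset\dom(\delta)$ is tautological, being exactly the condition built into the definition of $\AA_\Delta$.

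Next I would treat the even/odd distinction. If $\pR$ is ungraded there is no candidate grading, so the triple is odd by definition. If $\pR$ is graded with grading unitary $\Gamma_\pR$, then, because $\gamma$ acts trivially on the even subnet, the relation $\Gamma_\pR\pR_I(x)\Gamma_\pR=\pR_I(\gamma(x))$ shows that $\Gamma_\pR$ commutes with $\pR(\A^\gamma(I))$ for every $I$, hence with $\pR(C^*(\A^\gamma))$; since conjugation by $\Gamma_\pR$ is $\sigma$-weakly continuous and the extension of $\pR$ is normal, this commutation passes to $W^*(\A^\gamma)$, and as $\rho(\AA_\Delta)\subset W^*(\A^\gamma)$ we obtain $[\Gamma_\pR,\pR\circ\rho(x)]=0$ for all $x\in\AA_\Delta$. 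Together with $\Gamma_\pR Q\Gamma_\pR=-Q$, which holds because $Q=G_0^\pR$ is odd in the graded case (again Proposition \ref{prop:CFT-supersymmetric}), this makes the triple even with grading $\Gamma_\pR$.

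The continuity of $\pR\circ\rho:\AA_\Delta\to\dom(\delta)$ is then immediate, since the seminorm $\|\cdot\|_\rho$ from Definition \ref{def:gen2-AA} was tailored for exactly this purpose. Using contractivity of the $*$-homomorphism $\pR\circ\rho$ one gets
\[
\|\pR\circ\rho(x)\|_{B(\H_R)}+\|\delta(\pR\circ\rho(x))\|_{B(\H_R)}\le\|x\|_{W^*(\A^\gamma)}+\|\delta(\pR\circ\rho(x))\|_{B(\H_R)}=\|x\|_\rho,
\]
i.e. $\pR\circ\rho$ is bounded by a single defining seminorm and hence continuous into the Banach algebra $\dom(\delta)$. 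No step is a genuine obstacle; the only point requiring mild care is the normality argument transporting the commutation with $\Gamma_\pR$ from the norm-dense $C^*(\A^\gamma)$ up to its enveloping von Neumann algebra $W^*(\A^\gamma)$.
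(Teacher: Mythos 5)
Your proof is correct and follows essentially the same route as the paper's: $\theta$-summability from the trace-class Standing Assumption, $\pR\circ\rho(\AA_\Delta)\subset\dom(\delta)$ by the very definition of $\AA_\Delta$, evenness from $\pR\circ\rho(W^*(\A^\gamma))\subset\pR(W^*(\A^\gamma))\subset B(\H_R)^\Gamma$ together with the oddness of $Q=G_0^\pR$, and continuity read off directly from the seminorms $\|\cdot\|_\rho$. You merely spell out details the paper leaves implicit (the normality argument for the commutation with $\Gamma_\pR$ and the contractivity estimate), all of which are sound.
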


\begin{proof}
Note simply that the trace-class property in \eqref{eq:gen2-trace-class} guarantees the $\theta$-summability. By definition we have 
$\pR \circ \rho(\AA_\Delta)\subset \dom(\delta)$. Finally, in the case of graded $\pR$, we have $\pR\circ \rho(\AA_\Delta)\subset 
\pR\circ \rho(W^*(\A^\gamma)) \subset \pR(W^*(\A^\gamma))\subset B(\H_R)^\Gamma$ and $Q=G_0^\pR$ is odd, so the spectral triple is even. The continuity of 
$\pR \circ \rho :\AA_\Delta \to \dom(\delta)$ follows easily from the definition of the locally convex topology on $\AA_\Delta$. 
\end{proof}

We shall collect a few further important properties of $\AA_\Delta$. For the proof we need

\begin{lemma}\label{lem:gen2-locAA}
Let $I_1, I_2\in\I$ with $\bar{I}_1\subset I_2$. Then there is a unitary $V\in B(\H\otimes\H,\H)$ such that $V(a_1\otimes a_2') V^* = a_1 a_2'$
for all $a_1 \in \A(I_1)$ an all $a_2' \in Z\A(I_2')Z^*$, and $V(\Gamma\otimes \Gamma)V^*=\Gamma$. So $V$ implements an isomorphism $\A(I_1)\vee Z\A(I_2')Z^* \simeq \A(I_1)\otimes Z\A(I_2')Z^*$ intertwining the gradings.
\end{lemma}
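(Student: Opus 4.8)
The plan is to first rewrite the twisted commutant $Z\A(I_2')Z^*$ in untwisted form, then produce the spatial product isomorphism through the split property and the Doplicher--Longo theory of standard split inclusions, and finally handle the grading, which is where the genuine content lies. First I would record that $Z^2=\Gamma$: since $(\unit-\rmi\Gamma)^2=-2\rmi\Gamma$ and $(1-\rmi)^2=-2\rmi$, we get $Z^2=\Gamma$, and likewise $(Z^*)^2=\Gamma$. Using graded Haag duality (property $(3)$) in the form $\A(I_2')=Z\A(I_2)'Z^*$ I then obtain
\[
Z\A(I_2')Z^* = Z^2\,\A(I_2)'\,(Z^*)^2 = \Gamma\,\A(I_2)'\,\Gamma = \A(I_2)',
\]
the last equality because $\gamma=\Ad\Gamma$ preserves $\A(I_2)$ and hence $\A(I_2)'$. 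Thus the statement reduces to producing a unitary $V\colon\H\otimes\H\to\H$ with $V(a_1\otimes a_2)V^*=a_1a_2$ for $a_1\in\A(I_1)$, $a_2\in\A(I_2)'$, together with $V(\Gamma\otimes\Gamma)V^*=\Gamma$. Note that $\A(I_1)$ and $\A(I_2)'$ commute, since isotony gives $\A(I_1)\subset\A(I_2)$ and hence $\A(I_2)'\subset\A(I_1)'$.

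For the algebraic part I would apply the split property: there is a type $\mathrm{I}$ factor $F$ with $\A(I_1)\subset F\subset\A(I_2)$. By the Reeh--Schlieder property the vacuum $\Omega$ is cyclic and separating for $\A(I_1)$, $\A(I_2)$, and (as is standard in this setting) for the relative commutant $\A(I_1)'\cap\A(I_2)$, so $(\A(I_1)\subset\A(I_2),\Omega)$ is a standard split inclusion. The Doplicher--Longo theory then yields the product isomorphism $\A(I_1)\vee\A(I_2)'\cong\A(I_1)\otimes\A(I_2)'$ together with a canonical unitary implementing it; taking its inverse gives a unitary $V\colon\H\otimes\H\to\H$ with $V(a_1\otimes a_2)V^*=a_1a_2$, which settles the first assertion.

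The grading compatibility $V(\Gamma\otimes\Gamma)V^*=\Gamma$ is the main obstacle, and I would resolve it through canonicity rather than an explicit formula. Since $\Gamma\Omega=\Omega$ and $\Gamma$ normalizes both $\A(I_1)$ and $\A(I_2)$, the computation $\Gamma S_{\A(I_j),\Omega}\Gamma=S_{\A(I_j),\Omega}$ on the natural core, followed by uniqueness of the polar decomposition, shows that $\Gamma$ commutes with all the modular objects $J_{\A(I_j)}$, $\Delta_{\A(I_j)}$ that enter the Doplicher--Longo construction; hence $\Gamma$ preserves all the data defining the canonical unitary. I would then consider $\tilde V:=\Gamma\,V\,(\Gamma\otimes\Gamma)\colon\H\otimes\H\to\H$. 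Using $\gamma^2=\id$ and that $\gamma$ maps $\A(I_1)$ and $\A(I_2)'$ into themselves, one checks $\tilde V(a_1\otimes a_2)\tilde V^*=a_1a_2$, so $\tilde V$ implements the \emph{same} product isomorphism and, by the $\Gamma$-invariance of the defining data, satisfies the same canonical normalization as $V$. By uniqueness of the canonical unitary $\tilde V=V$, which is exactly $V(\Gamma\otimes\Gamma)V^*=\Gamma$.

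I expect the grading to be the delicate point precisely because it cannot be fixed by an ad hoc correction. An arbitrary unitary $V_0$ implementing the product isomorphism only gives $V_0(\Gamma\otimes\Gamma)V_0^*=u_0\Gamma$ with $u_0\in\A(I_1)'\cap\A(I_2)$ obeying $\gamma(u_0)=u_0^*$; to absorb $u_0$ by replacing $V_0$ with $wV_0$ for a commutant unitary $w\in\A(I_1)'\cap\A(I_2)$ one would need to solve the coboundary equation $w^*\gamma(w)=u_0$, which is not solvable in general (already the scalar case $\gamma=\id$, $u_0=-\unit$ has no solution). This is exactly why the argument must route through the canonicity and modular invariance of the Doplicher--Longo unitary, where $u_0=\unit$ is forced, rather than through a direct correction of an arbitrary $V_0$.
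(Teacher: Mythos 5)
Your proof is correct and follows essentially the same route as the paper: reduce $Z\A(I_2')Z^*$ to $\A(I_2)'$, invoke the Doplicher--Longo canonical implementation of the split isomorphism for the standard split inclusion $(\A(I_1)\subset\A(I_2),\Omega)$, and derive $V(\Gamma\otimes\Gamma)V^*=\Gamma$ from uniqueness of the canonical datum together with $\Gamma\Omega=\Omega$ and the fact that $\gamma$ normalizes the algebras. The paper runs the uniqueness step slightly more directly -- $\Gamma$ preserves the natural cone $P^\natural_\Omega(\A(I_1)'\cap\A(I_2))$ of the \emph{relative commutant} (which is the cone actually entering the construction, rather than the modular data of the $\A(I_j)$ themselves), so the unique product vector $\eta$ there satisfies $\Gamma\eta=\eta$, and $V(a_1\Omega\otimes a_2'\Omega)=a_1a_2'\eta$ then intertwines the gradings by direct computation.
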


\begin{proof}
By our standing assumption and the Reeh-Schlieder property, the inclusion\linebreak $(\A(I_1)\subset\A(I_2),\Omega)$ is a standard split inclusion in the sense of \cite{DL2,DL84}, with $\Omega$ the (unique) vacuum vector of $\A$. By \cite[Sect.3]{DL2} (see also \cite{DL84}) there is a unique vector $\eta$ in the natural cone 
$P^\natural_\Omega(\A(I_1)'\cap \A(I_2))$ such that 
$\langle \eta, a_1 a_2' \eta \rangle = \langle \Omega, a_1 \Omega \rangle \langle \Omega, a_2' \Omega\rangle$, for all $a_1\in\A(I_1)$ and $a_2'\in\A(I_2)'$. Moreover, by the uniqueness of $\eta$ we have $\Gamma \eta = \eta$ (\cf the proof of \cite[Lemma 3.3]{DL2}). Then the unitary $V\in B(\H\otimes\H,\H)$ defined by 
$V (a_1\Omega \otimes a_2'\Omega)= a_1a_2'\eta$ satisfies  $V(a_1\otimes a_2') V^* = a_1 a_2'$ for all $a_1 \in \A(I_1)$ and all $a_2' \in \A(I_2)'$, and $V(\Gamma\otimes \Gamma)V^*=\Gamma$. Since, by graded Haag duality (3) in Section \ref{sec:CFT}, we have $\A(I_2)'= Z\A(I_2')Z^*$, the claim follows.
\end{proof}

\begin{proposition}\label{prop:gen2-locAA}
We have:
\begin{itemize}
\item[$(1)$] If $\Delta\subset\Delta^1$, then
\[
 \AA_\Delta(I) = \pi_R^{-1}(\dom(\delta))\cap \A^\gamma(I) \not= \C, \quad I\in\I.
\]
In particular, the local algebras $\AA_\Delta(I)$ are non-trivial and coincide with $\AA_{\Delta^1}(I)$.
\item[$(2)$] Given a unitary $u\in\A^\gamma(I)$, for some $I\in\I$, such that $\Ad(u) \in \Delta^1_I$, then $u\in\AA_{\Delta^1}(I)$.
\end{itemize}
\end{proposition}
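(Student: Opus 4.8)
The plan is to treat the two parts separately: the content of $(1)$ is an equality of algebras together with a non-triviality statement, while $(2)$ is a separation argument built on the split property.

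For the equality in $(1)$, the inclusion $\AA_\Delta(I)\subseteq\pi_R^{-1}(\dom(\delta))\cap\A^\gamma(I)$ is immediate, since $\id\in\Delta$ forces $\pi_R(x)=\pi_R\circ\id(x)\in\dom(\delta)$ for $x\in\AA_\Delta$. For the reverse inclusion I would fix $x\in\A^\gamma(I)$ with $\pi_R(x)\in\dom(\delta)$ and an arbitrary $\rho\in\Delta\subset\Delta^1$, localized in some $J\in\I$, and \emph{transport $\rho$ away from $I$}: choosing $g\in\PSI$ with $\dot g\bar J\subset I'$, the endomorphism $\rho^g=\Ad(z(\rho,g)^{*})\circ\rho$ of \eqref{eqcovcocycle1} is localized in $\dot g J$ and hence acts trivially on $\A^\gamma(I)$, so that $\rho(x)=\Ad(z(\rho,g))(\rho^g(x))=z(\rho,g)\,x\,z(\rho,g)^{*}$. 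Applying $\pi_R$ gives $\pi_R\circ\rho(x)=\pi_R(z(\rho,g))\,\pi_R(x)\,\pi_R(z(\rho,g))^{*}$, a product of elements of $\dom(\delta)$ because $\rho\in\Delta^1$ guarantees $\pi_R(z(\rho,g))\in\dom(\delta)$ and $\dom(\delta)$ is a Banach $*$-algebra. Thus $\pi_R\circ\rho(x)\in\dom(\delta)$ for every $\rho\in\Delta$, i.e. $x\in\AA_\Delta(I)$. Since the resulting set is manifestly independent of $\Delta$, taking $\Delta=\Delta^1$ gives $\AA_\Delta(I)=\AA_{\Delta^1}(I)$.

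The non-triviality $\pi_R^{-1}(\dom(\delta))\cap\A^\gamma(I)\neq\C$ I expect to be the \emph{main obstacle} of part $(1)$. By Proposition \ref{prop:indpR} it is equivalent, in the vacuum, to producing a non-scalar $x\in\A^\gamma(I)$ in the domain of the canonical superderivation $[G(\phi_I),\cdot]$ with $\phi_I\equiv1$ on $I$. Mollifying a seed $a\in\A^\gamma(K)$, $\bar K\subset I$, along $s\mapsto\rme^{\rmi sG(\psi)}$, i.e. $x=\int f(s)\,\rme^{\rmi sG(\psi)}a\,\rme^{-\rmi sG(\psi)}\,ds$ with $f$ even, does make $x$ differentiable \emph{for} $G(\psi)$ and, if $\supp\psi\subset I$, keeps $x\in\A^\gamma(I)$; the difficulty is that the supercharge field generates no geometric flow, so the conjugation spreads the localization of $x$ across all of $\supp\psi$, and $x$ then fails the criterion relative to $G(\phi_I)$ because $\phi_I-\psi$ does not vanish on $\supp x$. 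I would therefore not mollify along $G$ but exploit the super-Virasoro structure: mollify along the \emph{geometric} diffeomorphism flow $\rme^{\rmi sL(\eta)}$ to keep the support compactly inside $I$, and control the localized superderivation through the supersymmetry relation $[G(\phi),G(\chi)]_{+}=2L(\phi\chi)+\text{const}$, recovering the $G$-differentiability from the well-localized Virasoro action, along the lines of \cite{CHKL}.

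For $(2)$, by part $(1)$ it suffices to show $\pi_R(u)\in\dom(\delta)$, and by Proposition \ref{prop:indpR} this amounts to boundedness of $[G(\phi_I),u]$ in the vacuum. The hypothesis $\Ad(u)\in\Delta^1_I$ means exactly that $\pi_R(u\,\alpha_g(u)^{*})\in\dom(\delta)$ for all $g\in\PSI$, since a direct computation with the inner covariant implementation gives $z(\Ad u,g)=u\,\alpha_g(u)^{*}$. I would pick $g$ moving $I$ to a well-separated interval $J=\dot g I$ disjoint from $I$ and set $w:=\alpha_g(u)\in\A^\gamma(J)$, a unitary commuting with $u$, so that $\pi_R(uw^{*})\in\dom(\delta)$. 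Choosing $\hat I\in\I$ with $\bar I\cup\bar J\subset\hat I$ and $\phi_{\hat I}\equiv1$ on $\hat I$, Proposition \ref{prop:indpR} turns this into boundedness of $[G(\phi_{\hat I}),uw^{*}]$; splitting $\phi_{\hat I}=\phi_1+\phi_{\mathrm{mid}}+\phi_2$ with $\phi_1\equiv1$ near $I$ and $\phi_2\equiv1$ near $J$, and using graded locality to annihilate the middle and cross terms, this reads
\[
[G(\phi_1),u]\,w^{*} + u\,[G(\phi_2),w^{*}]\in B(\H).
\]

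Finally I would separate the two summands using the split property. Applying Lemma \ref{lem:gen2-locAA} to an intermediate split inclusion $\bar I_1\subset I_2$ with $\bar I\subset I_1$ and $J\subset I_2'$, the implementing unitary $V$ yields $V^{*}(uw^{*})V=u\otimes w^{*}$, the evenness of $w$ giving $w\in Z\A(I_2')Z^{*}$ and thereby absorbing the $Z$-twist. Transporting the displayed identity through $V$ and applying a slice $\id\otimes\omega_{\xi,\eta}$, with $\xi,\eta$ chosen so that $\langle w^{*}\eta,\xi\rangle\neq0$ while the functional annihilates the (a priori unbounded) second-factor term $u\otimes[G(\phi_2),w^{*}]$, one extracts that $[G(\phi_1),u]$ agrees with a bounded operator on a dense domain and is hence bounded. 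Since $\phi_1\equiv1$ on $I$, Proposition \ref{prop:indpR} then gives $\pi_R(u)\in\dom(\delta)$, i.e. $u\in\AA_{\Delta^1}(I)$. The delicate point — the \emph{main obstacle} of part $(2)$ — is this slicing step: the two terms $[G(\phi_1),u]$ and $[G(\phi_2),w^{*}]$ need not individually be bounded, so the separation must be made through the factorization $V$ and a careful choice of slicing \emph{vectors} (rather than a normal state), together with the graded, $Z$-twisted bookkeeping of Lemma \ref{lem:gen2-locAA}.
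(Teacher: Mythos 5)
Your overall architecture coincides with the paper's. For the equality in $(1)$ the paper uses exactly your transport argument: for $x\in\pi_R^{-1}(\dom(\delta))\cap\A^\gamma(I)$ and $\rho\in\Delta$ localized in $I_0$, pick $h$ with $\dot h\bar I_0\cap\bar I=\emptyset$, write $\rho=\Ad(z(\rho,h))\circ\rho^h$ with $\rho^h$ acting trivially on $\A^\gamma(I)$, and conclude that $\pi_R(\rho(x))=\Ad(\pi_R(z(\rho,h)))(\pi_R(x))\in\dom(\delta)$. For $(2)$ the paper likewise starts from $z(\Ad(u),g)=u\,\alpha_g(u)^*$, reduces to the vacuum via Proposition \ref{prop:indpR}, and uses the split-property unitary $V$ of Lemma \ref{lem:gen2-locAA} to rewrite $\delta_{I_1}(u\,\alpha_g(u)^*)$ as $\delta_{I_1}(u)\otimes\alpha_g(u)^*+\Gamma u\otimes\delta_{I_1}(\alpha_g(u)^*)$; the decoupling is then done not by choosing slicing vectors that kill the second term but by the contrapositive: if $\delta_{I_1}(u)$ were unbounded, one could blow up the first term along normalized sequences in the first tensor factor while the second term stays bounded, since $\|\Gamma u\|=1$ and the second-factor vectors are fixed. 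Your slicing variant works as well, and what you single out as the main obstacle of $(2)$ in fact dissolves: you do not need the functional to annihilate $[G(\phi_2),w^*]$ — for $\xi,\eta$ in the form domain the slice of the second term is just a finite scalar times the bounded operator $\Gamma u$, so $\omega_{\xi,\eta}(w^*)\,[G(\phi_1),u]$ equals a bounded operator minus that term, and it suffices to pick $\xi,\eta$ with $\omega_{\xi,\eta}(w^*)\neq 0$.

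The one genuine soft spot is the non-triviality claim in $(1)$. The paper disposes of it in one line by citing \cite[Thm.4.13]{CHKL}: the $\sigma$-weak closure of $\pi_R^{-1}(\dom(\delta))\cap\A^\gamma(I)$ contains $\A^\gamma_{\SVirc}(I)$. Your proposed replacement — mollify along $\rme^{\rmi sL(\eta)}$ and recover $G$-differentiability from $[G(\phi),G(\chi)]_{+}=2L(\phi\chi)+\mathrm{const}$ — does not go through as stated: averaging over the Virasoro flow controls $[L(\eta),x]$, and the anticommutation relation only converts this into control of $[G(\phi)G(\chi)+G(\chi)G(\phi),x]$, from which the single commutator $[G(\phi_I),x]$ cannot be isolated. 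The mechanism actually used (in \cite{CHKL}, and mirrored in Proposition \ref{th:Loop-domain} of this paper for the current-algebra models) is different: one shows directly, via the energy bounds and the (anti)commutation relations, that resolvents and exponentials of the smeared super-Virasoro fields lie in $\dom(\delta)$, and these generate enough of $\A^\gamma_{\SVirc}(I)$ to produce non-scalar elements. If you want a self-contained argument for non-triviality, that is the route to take; otherwise everything else in your proposal matches the paper's proof.
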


\begin{proof}
(1) Given $x\in\pi_R^{-1}(\dom(\delta))\cap \A^\gamma(I)$ and $\rho\in\Delta_{I_0}$ with some $I_0\in\I$, choose $h\in\PSI$ such that $\dot{h}\bar{I}_0\cap \bar{I}=\emptyset$. Then
\[
 \rho(x) = \Ad(z(\rho,h))\circ \alpha_h\circ\rho\circ\alpha_h^{-1} (x) =\Ad(z(\rho,h))(x)\in\pi_R^{-1}(\dom(\delta)),
\]
by assumption on $x$ and $\Delta_{I_0}$. Since this holds for every $I_0\in\I$ and $\rho\in\Delta_{I_0}:= \Delta \cap \Delta^0_{I_0}$, we have $x\in\AA_\Delta(I)$. 
From Proposition \ref{prop:CFT-supersymmetric} and \cite[Thm.4.13]{CHKL} it follows that the closure of 
$\pi_R^{-1}(\dom(\delta))\cap \A^\gamma(I)$ in the $\sigma$-weak topology of $\A^\gamma(I)$ contains $\A^\gamma_{\SVirc}(I)$, 
the even part of the local algebra corresponding to the super-Virasoro subnet $\A_{\SVirc} \subset \A$ as in Definition \ref{def:CFT-SCFTnet}.
In particular $\pi_R^{-1}(\dom(\delta))\cap \A^\gamma(I)$ is nontrivial.
 
(2) Since $u\in\A^\gamma(I)$, $\Ad(u)$ defines an automorphism of $C^*(\A^\gamma)$ localized in $I$, and by assumption $\Ad(u)\in\Delta^1_I$. The unique inner representation establishing covariance is given by $U_{\pi^\gamma_0 \circ \Ad(u)}(g) := u U^\gamma (g) u^*$.
It follows that the corresponding cocycle is given by $z(\Ad(u), g) = u \alpha_g(u)^*$.
Next, choose $g\in {\mathcal U}_I$ and $I_0\supset \bar{I}$ such that $\dot{g} \bar{I_0}\cap \bar{I_0}=\emptyset$, and fix $I_1\in\I$ containing both $I_0$ and $\dot{g}I_0$. Then Lemma \ref{lem:gen2-locAA} gives us a unitary $V\in B(\H\otimes\H,\H)$ implementing the isomorphism $\A(I_0)\vee Z\A(\dot{g}I_0)Z^* \simeq \A(I_0)\otimes Z\A(\dot{g}I_0)Z^*$. Recall from the proof of Proposition \ref{prop:indpR} that, for a local element (like $u\alpha_g(u)^*$) in $\A^\gamma(I_1)$, it suffices to check differentiability in the vacuum representation, \ie whether it lies in the domain of $\delta_{I_1} = [G(\phi_{I_1}),\cdot]$; here $\phi_{I_1}\in\Cci(\S)$ is $1$ on $I_1$ and its support is non-dense in $\S$. Then on $\A^\gamma(I)\vee \A^\gamma(\dot{g}I)$, $\delta_{I_1}$ is implemented by the commutator with $G(\phi_I) + G(\phi_{\dot{g}I})=G(\phi_I) -\rmi\Gamma Z G(\phi_{\dot{g}I})Z^*$. We use the fact that $G(\phi_I)$ preserves $\Cci(L_0)$ and that it is affiliated with $\A(I_0)$. Moreover, by assumption $\delta_{I_1} (u \alpha_g(u)^*)$ is bounded. Thus, for $\psi_1,\psi_2,\phi_1,\phi_2\in\Cci(L_0)$ and using $V(\Gamma\otimes \Gamma)V^*=\Gamma$ from Lemma \ref{lem:gen2-locAA}, we obtain
\begin{align*}
\langle \psi_1\otimes\psi_2, V^* \delta_{I_1} (u \alpha_g(u)^*) V & (\phi_1\otimes\phi_2)\rangle
= \langle \psi_1\otimes\psi_2, V^* \big( (G(\phi_I)-\rmi\Gamma ZG(\phi_{gI})Z^*) u \alpha_g(u)^*\\
& - u \alpha_g(u)^* (G(\phi_I) -\rmi\Gamma ZG(\phi_{gI})Z^*) \big) V (\phi_1\otimes\phi_2)\rangle\\
=& \langle \psi_1\otimes\psi_2, [(G(\phi_I)\otimes\unit), (u \otimes \alpha_g(u)^*)] (\phi_1\otimes\phi_2)\rangle\\
&-\rmi \langle \psi_1\otimes\psi_2, [(\Gamma\otimes \Gamma)(\unit\otimes ZG(\phi_{gI})Z^*), (u \otimes \alpha_g(u)^*)] (\phi_1\otimes\phi_2)\rangle\\
=& \langle \psi_1\otimes\psi_2, ([G(\phi_I),u] \otimes \alpha_g(u)^*) (\phi_1\otimes\phi_2)\rangle\\
&-\rmi\langle \psi_1\otimes\psi_2, (\Gamma u \otimes \Gamma Z[G(\phi_{gI}),\alpha_g(u)^*]Z^*) (\phi_1\otimes\phi_2)\rangle\\
=& \langle \psi_1\otimes\psi_2, (\delta_{I_1}(u) \otimes \alpha_g(u)^*) (\phi_1\otimes\phi_2)\rangle\\
&+\langle \psi_1\otimes\psi_2, (\Gamma u \otimes \delta_{I_1}(\alpha_g(u)^*)) (\phi_1\otimes\phi_2)\rangle,
\end{align*}
so $\delta_{I_1}(u)\otimes \alpha_g(u)^*+ \Gamma u \otimes \delta_{I_1}(\alpha_g(u)^*)$ gives rise to a bounded operator on $\H\otimes\H$. Suppose $\delta_{I_1}(u)$ were unbounded; then we could find normalized sequences $\psi_{1,n},\phi_{1,n}\in \Cci(L_0)$, $n\in\N$, with fixed $\psi_2,\phi_2$, such that 
\[
  |\langle \psi_{1,n}\otimes\psi_2, (\delta_{I_1}(u) \otimes \alpha_g(u)^*) (\phi_{1,n}\otimes\phi_2)\rangle | \ra + \infty,
\]
while $\langle \psi_{1,n}\otimes\psi_2, (\Gamma u \otimes \delta_{I_1}(\alpha_g(u)^*)) (\phi_{1,n}\otimes\phi_2)\rangle$ remains bounded because $\|\Gamma u\|=1$ owing to unitarity; thus the sum of these two expressions would go to infinity, so $\delta_{I_1}(u)\otimes \alpha_g(u)^*+ \Gamma u \otimes \delta_{I_1}(\alpha_g(u)^*)$ would be unbounded, which is a contradiction. Hence, both $u$ and $\alpha_g(u)^*$ have to be in $\dom(\delta_{I_1})$, thus in $\pR^{-1}(\dom(\delta))$, and according to part (1), $u\in\AA_\Delta(I)$.
\end{proof}

\subsection*{The cocycles}

Let $\Delta \subset \Delta^0$. Then, for every $\rho \in \Delta$  we consider the $\theta$-summable spectral triple \linebreak
$(\AA_\Delta,(\pR \circ \rho,\H_R), Q)$ from Theorem \ref{theorem:gen2-AAST}, which is even when $\pR$ is graded and odd when it is ungraded. From the same theorem we also know that the representation $\pR\circ \rho: \AA_\Delta \to \dom(\delta)$ is continuous. Hence, by Theorem \ref{th:JLO} the spectral triple has a corresponding JLO cocycle which we will denote by $\tau_\rho$. The JLO cocycles 
$\tau_\rho$, $\rho \in \Delta$, will play a central role in the rest of this article as noncommutative geometric invariants associated to 
DHR endomorphisms. Note that if $\Delta$ is a semigroup then, for $\rho, \sigma \in \Delta$, we have $\rho\sigma\in\Delta$ and
$\tau_{\rho\sigma} = {\sigma^*} \tau_\rho$ where $\sigma^*$ is the pull-back of $\sigma \in \End(\AA_\Delta)$ to $\End(HE^*(\AA_\Delta))$.
In particular, $\tau_\rho = {\rho^*}\tau_{\id}$, for all $\rho \in \Delta$.  

In general, if $\rho, \sigma \in \Delta$ are localized in a given interval $I$ and $[\rho] = [\sigma]$, then there is a unitary $u \in \A^\gamma(I)$ such that $\sigma = \Ad(u)\rho$. Accordingly, if 
$\pR(u) \in \dom(\delta)$, then $\tau_\sigma = \tau_\rho^{\pR (u)}$ and hence 
$[\tau_\rho] = [\tau_\sigma]$ by Proposition \ref{prop:perturb-v}.
 However, in general, the cohomology class of the cocycle $\tau_\rho$ associated to $\rho$ could be different from the one of the cocycle 
 $\tau_\sigma$ associated to a localized endomorphism $\sigma$ equivalent to $\rho$. Therefore, the cohomology classes of the cocycles $\tau_\rho$, $\rho \in \Delta$, need not give invariants for DHR endomorphisms in the strict sense but only with respect to a finer equivalence relation involving the differentiability of intertwiners. One could say that they are not ``topological'' invariants but only invariants for the ``differentiable structure''.  Nonetheless, as a consequence of the following proposition, this distinction turns out to be unnecessary in the case of differentiably transportable automorphisms.
 
\begin{proposition}\label{prop:gen2-tau1}
Suppose $\Delta\subset\Delta^1$. Given $I\in\I$ and two automorphisms $\rho,\sigma \in\Delta_I$ which are equivalent via a unitary in $\A^\gamma(I)$, then the unitary lies actually in $\AA_\Delta(I)$ and the associated cocycles ${\tau_\rho}$ and ${\tau_\sigma}$ over $\AA_\Delta$ give rise to the same cohomology class.
\end{proposition}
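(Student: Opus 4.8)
The plan is to reduce the statement to the machinery already assembled, since the discussion preceding Proposition~\ref{prop:perturb-v} records that $[\tau_\rho]=[\tau_\sigma]$ holds as soon as one knows $\pi_R(u)\in\dom(\delta)$, where $u\in\A^\gamma(I)$ is the unitary realizing $\sigma=\Ad(u)\circ\rho$ (such a $u$ exists in $\A^\gamma(I)$ precisely because $\rho,\sigma$ are both localized in $I$ and equivalent). Thus the genuine content is the first assertion, namely that this $u$ automatically lies in $\AA_\Delta(I)$, equivalently $\pi_R(u)\in\dom(\delta)$; the cohomology equality is then essentially a citation.

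First I would write $\Ad(u)=\sigma\circ\rho^{-1}$ as automorphisms of $C^*(\A^\gamma)$ localized in $I$. Since $\rho\in\Delta_I^1$ is an automorphism, Proposition~\ref{prop:gen2-invert} gives $\rho^{-1}\in\Delta_I^1$. Both $\sigma$ and $\rho^{-1}$ are automorphisms, hence of statistical dimension one, so the composition part of Proposition~\ref{prop:gen2-tenscat} applies and yields $\Ad(u)=\sigma\rho^{-1}\in\Delta_I^1$. Now $u\in\A^\gamma(I)$ together with $\Ad(u)\in\Delta^1_I$ puts us in the hypotheses of Proposition~\ref{prop:gen2-locAA}(2), which gives $u\in\AA_{\Delta^1}(I)$; since $\Delta\subset\Delta^1$, Proposition~\ref{prop:gen2-locAA}(1) identifies $\AA_{\Delta^1}(I)=\AA_\Delta(I)$, whence $u\in\AA_\Delta(I)$. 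Because $\id\in\Delta$ by the standing convention on $\Delta$, membership in $\AA_\Delta$ in particular forces $\pi_R(u)=\pi_R\circ\id(u)\in\dom(\delta)$.

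For the cohomology statement I would put $v:=\pi_R(u)\in\dom(\delta)$ and observe that $\pi_R\circ\sigma=\Ad(v)\circ(\pi_R\circ\rho)$, so that $\tau_\sigma$ is exactly the perturbed cocycle $(\tau_\rho)^{v}$ in the sense of Proposition~\ref{prop:perturb-v}. In the even case (graded $\pi_R$) one must verify that $v$ commutes with the grading $\Gamma$ of the spectral triple, but this is immediate: $u\in\A^\gamma(I)$ is even, so $\Gamma\,\pi_R(u)\,\Gamma=\pi_R(\gamma(u))=\pi_R(u)$. Proposition~\ref{prop:perturb-v} then delivers $[\tau_\sigma]=[(\tau_\rho)^{v}]=[\tau_\rho]$.

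The only non-formal point, and hence the step I expect to be the main obstacle, is establishing $\Ad(u)\in\Delta^1_I$ --- that differentiable transportability survives both inversion and composition. This is precisely what Propositions~\ref{prop:gen2-invert} and \ref{prop:gen2-tenscat} furnish, so once they are in hand the proof is a direct assembly. The one bookkeeping matter worth double-checking is the localization: that $\sigma$, $\rho^{-1}$, and the resulting $\Ad(u)$ are all localized in the same interval $I$, so that Proposition~\ref{prop:gen2-locAA}(2) can be invoked with this $I$ throughout.
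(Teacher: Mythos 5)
Your proof is correct and follows essentially the same route as the paper: the paper likewise deduces $\Ad(u)\in\Delta^1_I$ from the closure of $\Delta^1_I$ under inverses and composition for automorphisms (phrased there as a ``two out of three'' observation), then applies Proposition~\ref{prop:gen2-locAA} to get $u\in\AA_{\Delta^1}(I)=\AA_\Delta(I)$ and Proposition~\ref{prop:perturb-v} with $v=\pi_R(u)$ for the cohomology equality. Your explicit check that $\pi_R(u)$ commutes with the grading is a harmless extra detail the paper leaves implicit.
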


\begin{proof}
We start with the following simple observation for localized covariant \emph{auto}morph\-isms, a consequence of Proposition \ref{prop:gen2-invert}:
\begin{equation}\label{eq:gen2-buy2get3}
 \textrm{if two out of $\sigma,\rho,\sigma\rho$ are in $\Delta^1_I$ then the third one lies in $\Delta^1_I$.}
\end{equation}
Let now $\rho$ and $\sigma$ be the two equivalent automorphisms and $u\in\A^\gamma(I)$ the intertwining unitary.
Considering the three localized automorphisms $\Ad(u),\rho, \sigma=\Ad(u)\rho$, the latter two are in $\Delta_I$ by assumption, so $\Ad(u)\in\Delta_I^1$ by \eqref{eq:gen2-buy2get3}. Then Proposition \ref{prop:gen2-locAA}  implies that $u\in\AA_{\Delta^1}(I)=\AA_\Delta(I)\subset \AA_\Delta$. Applying Proposition \ref{prop:perturb-v} with $A=\AA_\Delta$, $\pi=\pR\circ \rho$ and $v=\pR(u)$ finally provides the equivalence of the cocycles ${\tau_\sigma} = \tau_{\Ad(u) \rho}=\tau_\rho^v$ and $\tau_\rho$ on the global algebra $\AA_\Delta$.
\end{proof}

Our final observation has already been announced at the beginning of this section, namely that our spectral triples have to be constructed out of a certain global rather than local algebra:

\begin{proposition}\label{prop:gen2-CT-eq}
Given any $\rho\in \Delta \cap \Delta^1$, we have
\[
[{\tau_\rho}| _{\AA_\Delta(I)}] = [{\tau_{\id}} |_{\AA_\Delta(I)}],\quad I\in\I.
\]
In other words, all JLO cocycles for differentiably transportable endomorphisms are locally cohomologous.
\end{proposition}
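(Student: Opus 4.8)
The plan is to show that, once everything is restricted to the local algebra $\AA_\Delta(I)$, the representation $\pi_R\circ\rho$ is nothing but a \emph{differentiable} unitary perturbation $\Ad(v)\circ\pi_R$ of the vacuum representation, and then to invoke the homotopy invariance of the JLO class recorded in Proposition \ref{prop:perturb-v}. The geometric mechanism is that a localized endomorphism may be transported out of the way of $I$ by a charge transporter: once moved into the complement of $I$ it acts trivially there, so that on $\A^\gamma(I)$ the original $\rho$ is implemented by the very transporter $z(\rho,g)$, which is differentiable precisely because $\rho\in\Delta^1$.

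Concretely, suppose $\rho$ is localized in some $I_0\in\I$ and choose $g\in\PSI$ with $\dot{g}\overline{I_0}\cap\overline{I}=\emptyset$. Then the transported endomorphism $\rho^g=\alpha_g\circ\rho\circ\alpha_g^{-1}$ is localized in $\dot{g}I_0$, which is disjoint from $\overline{I}$, and hence acts trivially on $\A^\gamma(I)$. Using the identity $\rho^g=\Ad(z(\rho,g)^*)\circ\rho$ from \eqref{eqcovcocycle1}, I would deduce $\rho(x)=z(\rho,g)\,x\,z(\rho,g)^*$ for all $x\in\A^\gamma(I)$, in particular for all $x\in\AA_\Delta(I)$. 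Setting $v:=\pi_R(z(\rho,g))$, this says $\pi_R\circ\rho|_{\AA_\Delta(I)}=\Ad(v)\circ\pi_R|_{\AA_\Delta(I)}$, i.e. the restriction of $\pi_R\circ\rho$ coincides with $(\pi_R|_{\AA_\Delta(I)})^v$ in the notation of Proposition \ref{prop:perturb-v}.

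It then remains to check the hypotheses on $v$. Since $\rho\in\Delta^1$, differentiable transportability (Definition \ref{def:gen2-Delta}) gives $v=\pi_R(z(\rho,g))\in\dom(\delta)$, and $v$ is unitary because $z(\rho,g)$ is. In the even (graded $\pi_R$) case one further needs $v$ to commute with the grading; this holds because $z(\rho,g)\in C^*(\A^\gamma)$ and $\pi_R(C^*(\A^\gamma))$ commutes with $\Gamma_{\pi_R}$, as $\gamma$ fixes the even subalgebra. Finally, because the JLO cocycle is given by a universal formula in the representation, its restriction to the subalgebra $\AA_\Delta(I)$ is exactly the JLO cocycle of the spectral triple $(\AA_\Delta(I),(\pi_R\circ\rho|_{\AA_\Delta(I)},\H_R),Q)$; comparing this with $\tau_{\id}|_{\AA_\Delta(I)}$ and applying Proposition \ref{prop:perturb-v} with $A=\AA_\Delta(I)$, $\pi=\pi_R|_{\AA_\Delta(I)}$ and this $v$ yields $[\tau_\rho|_{\AA_\Delta(I)}]=[\tau_{\id}|_{\AA_\Delta(I)}]$.

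The main obstacle, and the very reason the statement is only local, is that $v=\pi_R(z(\rho,g))$ implements $\rho$ only on $\A^\gamma(I)$ and not on all of $W^*(\A^\gamma)$: globally $\rho$ is genuinely non-inner whenever $d(\rho)>1$, so the same argument cannot upgrade the conclusion to $\AA_\Delta$ itself. I would therefore be careful to keep everything restricted to the fixed interval $I$ throughout, and to emphasize that differentiability of the transporter (guaranteed by $\rho\in\Delta^1$) is exactly what is needed to land $v$ in $\dom(\delta)$ so that Proposition \ref{prop:perturb-v} is applicable. This also explains, in hindsight, why the global algebra is indispensable for \emph{separating} the cocycles in the next section.
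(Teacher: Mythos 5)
Your proof is correct and follows essentially the same route as the paper: transport $\rho$ into the complement of $I$ so that on $\A^\gamma(I)$ it is implemented by the differentiable charge transporter $z(\rho,g)$, and then invoke Proposition \ref{prop:perturb-v}. The only (immaterial) difference is that you apply Proposition \ref{prop:perturb-v} directly to the local spectral triple over $\AA_\Delta(I)$, whereas the paper applies it to the global algebra $\AA_\Delta$ and then restricts the resulting coboundary to the subalgebra $\AA_\Delta(I)$; both yield the same conclusion.
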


\begin{proof}
Let $I_1\in\I$ be the localization region of $\rho$ and let $I\in\I$ be given. Choose any $I_2\in\I$ such that $\bar{I_2}\subset I'$, and choose 
$g\in\PSI$ such that $I_2=\dot{g}I_1$. Then $\rho^g=\Ad (z(\rho,g)^*)\circ \rho$ is localized in $I_2$ and hence ${\rho^g}$ acts trivially on 
$\AA_\Delta(I)$. It follows that ${\tau_\rho} |_{\AA_\Delta(I)} ={\tau_{\id}^{\pR(z(\rho,g))}}| _{\AA_\Delta(I)} $. Now,  according to Proposition 
\ref{prop:perturb-v}, $[{\tau_{\id}^{\pR(z(\rho,g))}}] = [\tau_{\id}]$ and hence there exists an entire cochain $\psi\in CE^\bullet(\AA_{\Delta})$ such that 
${\tau_{\Ad(z(\rho,g))}} = \tau + {\partial} \psi$. 
Restricting a coboundary to a unital subalgebra gives us again a coboundary: a coboundary is the image under ${\partial}$ of a cochain, and restricting a cochain to a subalgebra gives again a cochain, now over the subalgebra; applying then ${\partial}$ to this restricted cochain defines a coboundary (over the subalgebra), which by construction is just the restriction of the original coboundary.
Thus $({\partial}\psi)|_{\AA_{\Delta}(I)} = {\partial}(\psi |_{\AA_{\Delta}(I)})\in CE^\bullet(\AA_{\Delta}(I))$ is again a coboundary and 
${\tau_\rho}|_{\AA_{\Delta}(I)} = \tau_{\id} |_{\AA_{\Delta}(I)}+ ({\partial} \psi)|_{\AA_{\Delta}(I)}$ is cohomologous to $\tau_{\id} |_{\AA_{\Delta}(I)}$.
\end{proof}

\section{Pairing with K-theory for superconformal nets and geometric invariants for DHR endomorphisms }\label{sec:gen-pairing}

\subsection*{Even case}

The spectral triples in Theorem \ref{theorem:gen2-AAST} associated to a superconformal net $\A$, a subset $\Delta\subset\Delta^0$ and an irreducible graded Ramond representation $\pR$ of $\A$ is even, so the corresponding JLO cocycles $\tau_\rho$, $\rho \in \Delta$, are even, and they pair with 
$K_0(\AA_\Delta)$-classes: in fact, according to Theorem \ref{th:JLO}(3), for a projection $p\in\AA_\Delta$, the densely defined operator $\pR(\rho(p))_- Q_+ \pR(\rho(p))_+$ from $\pR(\rho(p))_+\H_{R,+}$ to $\pR(\rho(p))_-\H_{R,-}$ is a Fredholm
operator and  
\[
 \tau_\rho(p) =  \ind_{\pR(\rho(p))_+\H_{R,+}} (\pR(\rho(p))_- Q_+ \pR(\rho(p))_+),
\]
depending only on the class of $p$ in $K_0(\AA_\Delta)$.

The task is to choose this $p$ in a suitable and general (model-independent) manner. Note that if $\pR(p)$ projects onto a subspace of 
$\H_{R,+}$, then $\pR(p)_- =0$ so that $\pR(p)_-Q_+ \pR(p)_+ = 0$ and  $\pR(p)_-\H_{R,-} = \{0\}$, and it follows that 
\begin{equation}\label{eq:gen2-p-even}
\begin{aligned}
\tau_{\id}(p) =& \ind(\pR(p)_- Q_+ \pR(p)_+) = \dim \ker_{\pR(p)_+\H_{R,+}} (0) - \dim \ker_{\pR(p)_-\H_{R,-}} (0)\\
 =& \dim (\pR(p)\H_{R,+}).
\end{aligned}
\end{equation}
However, not all such $p$ are in $\AA_\Delta$, \eg the projection $\frac12(\unit + \Gamma_R)\not\in \dom(\delta)$ onto $\H_{R,+}$, so we have to find suitable subprojections.

Given a representation $\pi$ of $W^*(\A^\gamma)$, which is quasi-equivalent to a subrepresentation of the universal representation of $W^*(\A^\gamma)$, denote by $s(\pi)\in \ZZ(W^*(\A^\gamma))$ the central support of the projection onto this subrepresentation so that, in 
particular $\pi(s(\pi)) = \unit$. Recall from Proposition \ref{prop:CKL22} and Remark \ref{remark:gradedpR} the decomposition $\pR |_{C^*(\A^\gamma)} = \pi_{R,+}\oplus \pi_{R,-}=\pi_{R,+}\oplus \pi_{R,+}\circ \hat{\gamma}$ into irreducible representations of $C^*(\A^\gamma)$ on $\H_R = \H_{R,+}\oplus \H_{R,-}$. Considering then for $\pi$ the irreducible representation $\pi_{R,+}$, we get $\pi_{R,+}(W^*(\A^\gamma))= B(\H_{R,+})$, and $\pi_{R,+}$ restricts to an isomorphism $s(\pi_{R,+})W^*(\A^\gamma)\ra B(\H_{R,+})$. 
Let $\H_{R,0} \subset \H_R$ be the minimal energy subspace,
\ie the (finite-dimensional) $L_0^\pR$-eigenspace corresponding to the eigenvalue $\lw(\pR)$, and let 
$\H_{R,0,+}:= \H_{R,0} \cap \H_{R,+}$ be its even part. Note that, by replacing if necessary $\Gamma_R$ with $-\Gamma_R$, we can always 
assume that $\dim(\H_{R,0,+}) \neq 0$, so that $\lw(\pR)=\lw(\pi_{R,+})$.
 We consider any projection $p\in W^*(\A^\gamma)$ such that 
$\pi_{R,+}(p)\in B(\H_{R,+})$ is the projection onto $\H_{R,0,+}$. Then $p_{0,+}:= p \cdot s(\pi_{R,+})\in W^*(\A^\gamma)$ is well-defined and does not depend on the explicit choice of $p$, and neither does it depend on the explicit $\pR$ in its unitary equivalence class.

\begin{definition}\label{def:gen2-p0+}
Given a graded Ramond representation $(\pR,\H_R,\Gamma_R)$ of $\A$, we call the associated projection $p_{0,+} \in W^*(\A^\gamma)$ its \emph{characteristic projection}.
\end{definition}

It is natural to ask whether $p_{0,+}$ lies even in $\AA_\Delta$, and whether it does the job we want it to do. A partial answer is contained in

\begin{proposition}\label{prop:gen2-special-p0+}
\begin{itemize}
\item[$(1)$] $p_{0,+}$ has the following characteristic property:
\[
 \pi_{R,+}(p_{0,+})=\textrm{projection onto $\H_{R,0,+}$}
\]
and $\pi(p_{0,+}) = 0$ if $\pi_{R,+}$ is not equivalent to a subrepresentation of $\pi$.
\item[$(2)$] In $(\pR,\H_R)$, the characteristic projection attains the form 
\begin{equation}\label{eq:gen2-special-p0+}
 \pR(p_{0,+}) = \chi_1(\rme^{-(L_0^\pR-\lw \pR)}) \frac{\unit + \Gamma_R}{2} \in\dom(\delta)\subset B(\H_R),
\end{equation}
where $\chi_1$ denotes the characteristic function of $\{1\}\subset \R$.
\item[$(3)$] Suppose $\Delta\subset\Delta^1$, and $\pR\circ \rho$ and $\pR$ are disjoint, for every $\rho\in\Delta$ with $[\rho]\not= [\id]$. Then $p_{0,+}\in\AA_\Delta$.
\end{itemize}
\end{proposition}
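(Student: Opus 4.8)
The plan is to treat the three parts in order, since part (2) rests on part (1) and part (3) on both. Throughout, $E_0 := \chi_1(\rme^{-(L_0^\pR-\lw\pR)})$ will denote the spectral projection of $L_0^\pR$ onto its lowest eigenvalue $\lw\pR$, i.e. onto $\H_{R,0}$, and $P_+ := \frac{\unit+\Gamma_R}{2}$ the projection onto $\H_{R,+}$.

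For part (1) I would argue purely from the theory of central supports. Since $s(\pi_{R,+})$ is central and normalized so that $\pi_{R,+}(s(\pi_{R,+}))=\unit$, applying $\pi_{R,+}$ to $p_{0,+}=p\cdot s(\pi_{R,+})$ gives $\pi_{R,+}(p_{0,+})=\pi_{R,+}(p)$, which is the projection onto $\H_{R,0,+}$ by the choice of $p$. For the second assertion I would invoke the standard fact that, for a normal representation $\pi$ of $W^*(\A^\gamma)$, one has $\pi(s(\pi_{R,+}))=0$ precisely when $\pi$ is disjoint from $\pi_{R,+}$; as $\pi_{R,+}$ is irreducible, not being a subrepresentation of $\pi$ forces disjointness, whence $\pi(p_{0,+})=\pi(p)\pi(s(\pi_{R,+}))=0$.

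For part (2) I would first pin down the operator $\pR(p_{0,+})$ and only afterwards verify membership in $\dom(\delta)$. Decomposing $\pR=\pi_{R,+}\oplus\pi_{R,-}$ and noting that $\pi_{R,-}=\pi_{R,+}\circ\hat{\gamma}$ is an irreducible representation inequivalent to, hence disjoint from, $\pi_{R,+}$ (Proposition \ref{prop:CKL22}), part (1) yields that $\pi_{R,+}(p_{0,+})$ is the projection onto $\H_{R,0,+}$ while $\pi_{R,-}(p_{0,+})=0$; thus $\pR(p_{0,+})$ is the orthogonal projection onto $\H_{R,0,+}$. Since $\Gamma_R$ commutes with $L_0^\pR$, the projections $E_0$ and $P_+$ commute and $E_0P_+$ is exactly the projection onto $\H_{R,0,+}$, establishing the claimed identity. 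The delicate point, which is the technical heart of the proposition, is that this finite-rank projection lies in $\dom(\delta)$ even though $P_+$ alone does not. Here I would use that $Q=G_0^\pR$ commutes with $L_0^\pR=Q^2+\lambda\unit$, hence with $E_0$, while $Q$ is odd, so that on $\dom(Q)$ one has $[Q,P_+]=-\Gamma_R Q$ (unbounded) but $[Q,E_0P_+]=E_0[Q,P_+]=-\Gamma_R Q E_0$. Because $E_0$ is finite-rank with range $\H_{R,0}\subset\dom(Q)$ (finite-dimensionality coming from the trace-class condition \eqref{eq:gen2-trace-class}), $QE_0$ is bounded; matching this with \eqref{eq:NC-domain1} gives $\pR(p_{0,+})\in\dom(\delta)$ with $\delta(\pR(p_{0,+}))=-\Gamma_R QE_0$. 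The unbounded part of the commutator cancels precisely because the minimal-energy projection is finite-rank.

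For part (3) I would verify $\pR\circ\rho(p_{0,+})\in\dom(\delta)$ for each $\rho\in\Delta$ by distinguishing two cases. If $[\rho]\neq[\id]$, the hypothesis gives that $\pR\circ\rho$ and $\pR$ are disjoint; since $\pi_{R,+}$ is a subrepresentation of $\pR$, any common subrepresentation of $\pR\circ\rho$ and $\pi_{R,+}$ would also be common to $\pR\circ\rho$ and $\pR$, so $\pR\circ\rho$ is disjoint from $\pi_{R,+}$ and $\pR\circ\rho(s(\pi_{R,+}))=0$, forcing $\pR\circ\rho(p_{0,+})=0\in\dom(\delta)$. If $[\rho]=[\id]$, then $d(\rho)=1$ so $\rho$ is a localized automorphism (Proposition \ref{prop:CFT-endoms}), and being equivalent to $\id$ it is implemented by a local unitary $v\in\A^\gamma(I)$, i.e. $\rho=\Ad(v)$ (as recalled before Proposition \ref{prop:gen2-tau1}); since $\Ad(v)=\rho\in\Delta^1_I$, Proposition \ref{prop:gen2-locAA}(2) gives $\pR(v)\in\dom(\delta)$, and then $\pR\circ\rho(p_{0,+})=\pR(v)\,\pR(p_{0,+})\,\pR(v)^*$ is a product of elements of the Banach $*$-algebra $\dom(\delta)$ by part (2), hence lies in $\dom(\delta)$. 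As $\Delta\subset\Delta^1$ and $\id\in\Delta$, every $\rho$ falls into one of the two cases, so $p_{0,+}\in\AA_\Delta$. The main obstacle throughout is the interplay between the unboundedness of $Q$ and the finite-rank/disjointness structure; once the cancellation in part (2) and the disjointness bookkeeping in part (3) are set up correctly, the remaining steps are routine.
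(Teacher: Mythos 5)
Your proposal is correct and follows essentially the same route as the paper: part (1) from the definition of $p_{0,+}$ and central supports, part (2) by computing $\pR(s(\pi_{R,+}))=\frac12(\unit+\Gamma_R)$ via disjointness of $\pi_{R,\pm}$ and using that $Q$ preserves the finite-dimensional lowest-energy space to get $\dom(\delta)$ membership, and part (3) by splitting into the disjointness case and the case $[\rho]=[\id]$ where $\rho=\Ad(u)$ with $\pR(u)\in\dom(\delta)$. The only (cosmetic) difference is that you carry out the commutator cancellation $[Q,E_0P_+]=-\Gamma_R QE_0$ explicitly inside part (2), whereas the paper records the same boundedness observation at the start of its proof of part (3).
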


\begin{proof}
(1) is obvious from the definition of $p_{0,+}$.

(2) Since $\pi_{R_+}$ and $\pi_{R_-}$ are inequivalent irreducible representations they are disjoint and hence 
$\pi_{R_-} (s(\pi_{R,+})) =0$; recalling that $\pi_{R_+} (s(\pi_{R,+})) = \unit$ it follows that $\pR (s(\pi_{R,+}))$ is the projection of 
$\H_R$ onto $\H_{R,+}$, i.e., $\frac12(\unit+\Gamma_R)$. The projection in $B(\H_R)$ onto the finite-dimensional subspace of lowest energy $\H_{R,0}$ is obviously given by $\chi_1(\rme^{-(L_0^\pR-\lw \pR)})$, which proves our claim.

(3) By definition, $\pR(p_{0,+})$ is the projection onto the even part of the \emph{finite}-dimensional eigenspace  corresponding to the  
$L_0^\pR$-eigenvalue $\lw(\pR)$.  $Q$ commutes with $L_0^\pR$ and hence it restricts to an operator, which is obviously bounded, on this finite-dimensional eigenspace (with spectrum $\subset\{\pm\sqrt{\lw(\pR)-c/24}\}$). It follows that $\pR(p_{0,+})\in\dom(\delta)$. 
Given $\rho \in\Delta$ equivalent to $\id$, we have $\rho= \Ad(u)$ with $u$ a unitary in $\AA_\Delta$, according to Proposition \ref{prop:gen2-CT-eq}; thus $\pR\circ \rho(p_{0,+})= \pR(u)\pR(p_{0,+})\pR(u)^*$ lies in $\dom(\delta)$, too. 
On the other hand, for $\rho\in\Delta$ with $\rho \not\simeq \id$, we have by assumption $\pR \circ \rho(p_{0,+})=0$, so 
$\pR\circ \rho(p_{0,+})\in\dom(\delta)$.
\end{proof}

Now we are in the position to prove our main theorem concerning the even index pairing:

\begin{theorem}\label{th:gen2-pairing-even}
Let $\A$ be a superconformal net with a fixed supersymmetric graded Ramond irreducible representation $(\pi_R,\H_R)$ with supercharge $Q$, and let $\Delta\subset\Delta^0$ be a subset of localized endomorphisms with $\id \in \Delta$. Then $(\AA_\Delta, (\pR\circ \rho,\H_R),Q)_{\rho\in\Delta}$ is a family of even $\theta$-summable spectral triples and the associated even JLO cocycles have the following properties:
\begin{itemize}
\item[$(1)$] Suppose $\tau_{\id}(\unit) = \ind (Q_+) \not= 0$ and that, for fixed $\sigma\in\Delta$ and all $\rho\in\Delta$ with $[\rho]\not=[\sigma]$, $\pR\circ \rho$ and $\pR\circ \sigma$ are disjoint. Then, for all $\rho\in\Delta$ with $[\rho]\not=[\sigma]$, we have
$[{\tau_\rho}]\not=[{\tau_\sigma}]$.
\item[$(2)$] Suppose that, for fixed automorphism $\sigma\in\Delta$ and all $\rho\in\Delta$ with $\rho\not=\sigma$, $\pR \circ \rho$ and 
$\pR\circ \sigma$ are disjoint. Then for every $\rho\in \Delta$ with $\rho\not=\sigma$, we have $[{\tau_\rho}]\not=[{\tau_\sigma}]$.
\item[$(3)$] Suppose $\Delta\subset\Delta^1$ and that, for fixed automorphism $\sigma\in\Delta$ and all $\rho\in\Delta$ with 
$[\rho]\not=[\sigma]$, $\pR \circ \rho$ and $\pR \circ \sigma$ are disjoint. Then for every $\rho\in \Delta$, we have
\[
 [\rho]=[\sigma] \quad \textrm{iff} \quad [{\tau_\rho}]=[{\tau_\sigma}].
\]
\end{itemize}
In either case, the two non-equivalent cocycles are separated by pairing them with a suitable element from $K_0(\AA_\Delta)$.
\end{theorem}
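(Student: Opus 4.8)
The plan is to prove each statement by producing, for the fixed $\sigma$, a single projection $p\in\AA_\Delta$ whose index pairings $\tau_\sigma(p)$ and $\tau_\rho(p)$ differ; since by Theorem \ref{th:JLO}(3) the pairing is the Fredholm index $\ind\big((\pR\circ\rho)(p)_-\,Q_+\,(\pR\circ\rho)(p)_+\big)$ and depends only on $[\tau_\rho]$ and $[p]$, a single such $p$ forces $[\tau_\rho]\neq[\tau_\sigma]$. First I would record the three regimes of this index that will be used: by \eqref{eq:gen2-p-even}, if $(\pR\circ\rho)(p)$ is a finite-rank projection with range in $\H_{R,+}$ the index equals $\dim\big((\pR\circ\rho)(p)\H_R\big)$; if $(\pR\circ\rho)(p)=0$ it equals $0$; and if $(\pR\circ\rho)(p)=\unit$ it equals $\ind(Q_+)=\tau_{\id}(\unit)$. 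The whole game is to arrange, via disjointness, that $\pR\circ\sigma$ lands in the first or third regime while $\pR\circ\rho$ lands in the middle one.

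For part (1) I would take $p:=s(\pR\circ\sigma)\in\ZZ(W^*(\A^\gamma))$, the central support of the subrepresentation $\pR\circ\sigma$, so that $(\pR\circ\sigma)(p)=\unit$ and hence $\tau_\sigma(p)=\ind(Q_+)=\tau_{\id}(\unit)\neq0$. To see $p\in\AA_\Delta$, note that if $[\rho]=[\sigma]$ then $\rho=\Ad(u)\circ\sigma$ with $u$ a unitary of $C^*(\A^\gamma)$ by Proposition \ref{prop:CFT-endoms}, whence $\pR\circ\rho=\Ad(\pR(u))\circ(\pR\circ\sigma)$ and $(\pR\circ\rho)(p)=\unit$, while if $[\rho]\neq[\sigma]$ the disjointness assumption gives $(\pR\circ\rho)(p)=0$; in both cases the image lies in $\dom(\delta)$. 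For $[\rho]\neq[\sigma]$ we then obtain $\tau_\rho(p)=0\neq\tau_\sigma(p)$, as required.

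For part (2) and the nontrivial implication of part (3), where $\sigma$ is an automorphism, I would instead use a finite-rank detector supported on the sector $\pi_{R,+}\circ\sigma$. Since $\sigma$ is an automorphism, $\pR\circ\sigma|_{C^*(\A^\gamma)}=(\pi_{R,+}\circ\sigma)\oplus(\pi_{R,-}\circ\sigma)$ splits into inequivalent irreducibles, and $\pi_{R,+}\circ\sigma$ maps $s(\pi_{R,+}\circ\sigma)\,W^*(\A^\gamma)$ isomorphically onto $B(\H_{R,+})$. Choosing a nonzero finite-rank projection $P_+$ onto a subspace of the dense set $\H_{R,+}\cap\dom(Q)$, let $q\le s(\pi_{R,+}\circ\sigma)$ be the projection of $W^*(\A^\gamma)$ with $(\pi_{R,+}\circ\sigma)(q)=P_+$; then $(\pR\circ\sigma)(q)=P_+$ (the odd part vanishing by disjointness of $\pi_{R,\pm}\circ\sigma$), a finite-rank projection with range in $\dom(Q)$, so $(\pR\circ\sigma)(q)\in\dom(\delta)$ and $\tau_\sigma(q)=\rank(P_+)>0$. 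For $\rho$ disjoint from $\sigma$ one has $(\pR\circ\rho)(q)=0$, giving $\tau_\rho(q)=0$. Membership $q\in\AA_\Delta$ is immediate in case (2), where by hypothesis every $\rho'\neq\sigma$ is disjoint from $\sigma$; in case (3) the only extra $\rho'$ to examine are the automorphisms with $[\rho']=[\sigma]$, for which $\Delta\subset\Delta^1$ together with Propositions \ref{prop:gen2-tau1} and \ref{prop:gen2-locAA} furnish a differentiable intertwiner $u$ with $\pR(u)\in\dom(\delta)$, whence $(\pR\circ\rho')(q)=\pR(u)P_+\pR(u)^*\in\dom(\delta)$.

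Finally, for the forward implication of (3): if $[\rho]=[\sigma]$ then $d(\rho)=d(\sigma)=1$, so $\rho$ is again an automorphism by Proposition \ref{prop:CFT-endoms}, both $\rho,\sigma$ are localized in a common interval $I$ and equivalent through a unitary of $\A^\gamma(I)$, and Proposition \ref{prop:gen2-tau1} yields $[\tau_\rho]=[\tau_\sigma]$; the reverse implication is exactly the separation established above. I expect the genuinely delicate point to be none of the index evaluations --- these follow at once from Theorem \ref{th:JLO}(3) and \eqref{eq:gen2-p-even} --- but rather the verification that the separating projection really belongs to $\AA_\Delta$, i.e.\ that $(\pR\circ\rho')(\cdot)\in\dom(\delta)$ for \emph{every} $\rho'\in\Delta$ and not only for $\sigma$ and the disjoint ones. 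It is precisely here that the disjointness hypotheses, and in case (3) the differentiable transportability $\Delta\subset\Delta^1$, are indispensable.
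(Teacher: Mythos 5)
Your proposal is correct and follows the paper's strategy very closely. Part (1) is identical to the paper's argument: you take the central support $s(\pR\circ\sigma)$, observe that disjointness forces $\pR\circ\rho(s(\pR\circ\sigma))\in\{0,\unit\}$ so that it lies in $\AA_\Delta$, and separate via $\tau_\sigma(s(\pR\circ\sigma))=\ind(Q_+)\neq 0$ versus $0$. For parts (2) and (3) the paper uses the specific element $\sigma^{-1}(p_{0,+})$, where $p_{0,+}$ is the characteristic projection of Definition \ref{def:gen2-p0+} (so that $\pR\circ\sigma(\sigma^{-1}(p_{0,+}))=\pR(p_{0,+})$ is the projection onto the finite-dimensional lowest-energy even subspace $\H_{R,0,+}$, manifestly in $\dom(\delta)$ since $Q$ is bounded on an $L_0^\pR$-eigenspace), whereas you build your detector $q$ directly under the central support $s(\pi_{R,+}\circ\sigma)$ with $(\pi_{R,+}\circ\sigma)(q)$ an arbitrary finite-rank projection whose range lies in $\dom(Q)$. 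The two constructions serve the same purpose and the verifications (vanishing on disjoint sectors, nonzero index $\rank(P_+)$ on the $\sigma$-sector, membership in $\AA_\Delta$ via differentiable intertwiners in case (3)) run in parallel; your version avoids invoking $\sigma^{-1}$ explicitly and shows the argument does not depend on the particular choice of $p_{0,+}$, while the paper's version has the advantage that $\sigma^{-1}(p_{0,+})$ is canonical and its index value $\dim\H_{R,0,+}$ is an intrinsic invariant of $\pR$ (which the paper then computes explicitly in the examples of Section \ref{sec:loop}). Your closing remarks correctly identify the delicate point — membership of the detector in $\AA_\Delta$, i.e.\ differentiability under \emph{every} $\rho'\in\Delta$ — which is exactly where the paper also concentrates its effort.
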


\begin{proof}
We have an even $\theta$-summable spectral triple according to Theorem \ref{theorem:gen2-AAST}.

(1) Note that under the present conditions, $\pR \circ \rho(s(\pR\circ\sigma)) = 0$ if $[\rho]\not=[\sigma]$, and \linebreak $\pR \circ 
\rho(s(\pR \circ \sigma))= \unit$ if $[\rho]=[\sigma]$; thus $s(\pR \circ \sigma)\in\AA_\Delta$. Moreover, it separates the cocycles since ${\tau_\rho}(s(\pR \circ \sigma)) = 0$ if $[\rho]\not=[\sigma]$, while ${\tau_\rho}(s(\pR \circ \sigma))= \tau_{\id}(\unit)=\ind_{\H_{R,+}}(Q_+) \not= 0$ if $[\rho]=[\sigma]$, and this depends only on the class $[s(\pR \circ \sigma)]\in K_0(\AA_\Delta)$, according to Theorem \ref{th:JLO}(3).

(2) Suppose $\rho\not=\sigma$. Then
\[
\pR\circ \rho(\sigma^{-1}(p_{0,+})) = \pR \circ \rho\sigma^{-1}(p_{0,+}) = 0
\]
because $p_{0,+}< s(\pR)$ and because $\pR\circ \rho$ and $\pR\circ \sigma$ are disjoint by assumption, whence 
$\pR\circ \rho\sigma^{-1}$ and $\pR$ are disjoint since $\sigma$ is an automorphism. On the other hand, for $\rho=\sigma$, 
\[
 \pR \circ \rho(\sigma^{-1}(p_{0,+}))= \pR(p_{0,+}) \in \dom(\delta),
\]
according to the preceding proposition, so $\sigma^{-1}(p_{0,+})\in\AA_\Delta$.  If $\rho\not=\sigma$, we then have ${\tau_\rho}(\sigma^{-1}(p_{0,+}))=0$, whereas ${\tau_\sigma}(\sigma^{-1}(p_{0,+}))= \dim (\H_{R,0,+})\not= 0$, so that $[{\tau_\rho}]\not=[{\tau_\sigma}]$, and they are separated by (the $K_0(\AA_\Delta)$-class of) $\sigma^{-1}(p_{0,+})$.

(3) Since $\Delta\subset\Delta^1$, $\rho$ is equivalent via a unitary in $\AA_\Delta$ to an endomorphism localized in the same interval $I\in\I$ as $\sigma$, so we may assume without loss of generality that $\rho$ and $\sigma$ are actually localized in the same interval $I$. If $[\rho]=[\sigma]$, then according to Proposition \ref{prop:gen2-CT-eq} they are intertwined by a unitary $u\in\AA_\Delta(I)$ wherefore
\[
 \pR\circ \rho(\sigma^{-1}(p_{0,+})) = \Ad(\pR(u))(\pR(p_{0,+})) \in \dom(\delta).
\]
On the other hand, if $[\rho]\not=[\sigma]$, then $\pR\circ \rho(\sigma^{-1}(p_{0,+}))=0$ by the assumption on disjointness. Thus $\sigma^{-1}(p_{0,+})\in\AA_\Delta$, and it separates the cocycles as follows:

\[
 {\tau_\rho}(\sigma^{-1}(p_{0,+})) = 
\left\lbrace\begin{array}{l@{\; \;\textrm{if} \; }l}
\tau_{\id}^{\pR(u)} (p_{0,+}) = \tau_{\id} (p_{0,+}) =  \dim (\H_{R,0,+}) & [\rho]=[\sigma] \\
0 & [\rho] \neq [\sigma ],
\end{array}\right.
\]
and according to Theorem \ref{th:JLO}(3) this depends only on the class of $\sigma^{-1}(p_{0,+})$ in $K_0(\AA_\Delta)$.
\end{proof}

\subsection*{Odd case}

The spectral triple associated to a superconformal net $\A$, a subset $\Delta\subset \Delta^1$ and an irreducible ungraded Ramond representation $\pR$ is odd. According to Theorem \ref{th:JLO}(3) the index pairing for odd spectral triples is given by
\[
 \tau_{\id} (v) = \ind_{\chi_{[0,\infty)}(Q)\H_R}(\chi_{[0,\infty)}(Q) \pR(v) \chi_{[0,\infty)}(Q)),
\]
for every unitary $v\in\AA_\Delta$, where $\chi_{[0,\infty)}(Q) \pR(u_{0,+}) \chi_{[0,\infty)}(Q)$ is considered as an operator from 
${\chi_{[0,\infty)}(Q)\H_R}$ to ${\chi_{[0,\infty)}(Q)\H_R}$, and this depends only on the class $[v]\in K_1(\AA_\Delta)$. Pictorially speaking, the unitary $\pR(v)$ should therefore act as a certain shift on the spectrum of $Q$ if we want a non-vanishing pairing, or otherwise as the unit element if we want a trivial pairing. Note here that the spectrum of $Q$ is discrete because that of $L_0^\pR=Q^2+\frac{c}{24}{\unit}$ is so. 

Since $\pR$ is ungraded, it remains irreducible in restriction to $\A^\gamma$ (\cf Proposition \ref{prop:CKL22}) and defines an irreducible representation of $W^*(\A^\gamma)$, denoted again $\pR$. As in the even case, $s(\pR)\in \ZZ(W^*(\A^\gamma))$ stands for the central support of the projection onto the subrepresentation $\pR$ of the universal representation of  $W^*(\A^\gamma)$, and $\pR$ restricts to an isomorphism $\pR |_{s(\pR)W^*(\A^\gamma)}$ between $s(\pR)W^*(\A^\gamma)$ and $B(\H_R)$ owing to irreducibility. Thus
\[
 u_{0,+} := (\pR |_{s(\pR)W^*(\A^\gamma)})^{-1}(u_s) + (\unit - s(\pR)) \in W^*(\A^\gamma),
\]
with $u_s\in\dom(\delta)\subset\B(\H_R)$ the spectrum shift unitary from Construction \ref{con:gen2-shift} below depending on $Q$, is a well-defined unitary.

\begin{definition}\label{def:gen2-u0+}
Given an ungraded Ramond representation  $(\pR,\H_R)$ of $\A$, we call the above $u_{0,+}\in W^*(\A^\gamma)$ the \emph{characteristic unitary} for $\pR$. 
\end{definition}

We have a similar characteristic property as in the even case (\cf Proposition \ref{prop:gen2-special-p0+}): $\pR(u_{0,+})= u_s$, the spectrum shift, while $\pi(u_{0,+}) =\unit$ if $\pi$ has no subrepresentation equivalent to $\pR$.

\begin{construction}\label{con:gen2-shift}
We would like to give a general construction of the \emph{spectrum shift unitary} $u_s\in B(\H_R)$ used in the preceding definition. The construction is somehow lengthy and technical, but the point is that a priori not much is known about the dimension of the eigenspaces of $Q$, so we have to go through all single steps.

(1) Consider the lowest energy subspace of $\H_R$: the finite-dimensional subspace where $L_0^\pR$ has eigenvalue $h=\lw(\pR)$. On this subspace, $Q=G_0^\pR$ is diagonalizable with spectrum $\subset \{\pm \sqrt{h-\frac{c}{24}} \}$, since $Q$ is selfadjoint and $Q^2=L_0^\pR-\frac{c}{24} \unit$. Let $p\in B(\H_R)$ be the projection onto the (nonzero!) eigenspace of $Q$ corresponding to the eigenvalue 
$\lambda_0:= \sqrt{h-\frac{c}{24}}$, denoted here by $\H_{R,0,+}$ but not to be confused with the different one in the even pairing. Then $p$ is well-defined, non-trivial, lies  in the image $\pR(W^*(\A^\gamma))$ owing to irreducibility, and commutes with $Q$ by construction.

(2) For $n\in\N$, let $\H_{R,n}$ be the subspace of $L_0^\pR$-eigenvalue $h+n$, and consider two orthogonal copies $\H_{R,n,\pm}$ of $\H_{R,0,+}$ in $\H_{R,n}$ such that the selfadjoint and diagonalizable operator $Q$ has eigenvalue $\lambda_n:=\sqrt{h+n-\frac{c}{24}}>0$ on $\H_{R,n,+}$ and $-\lambda_{n}<0$ on $\H_{R,n,-}$. Let us check that this is always possible. First, given $\xi_0\in\H_{R,0,+}$ such that $Q\xi_0=\lambda_0\xi_0$, let
\[
\xi_{\pm n} :=\Big(L_{-n}^\pR + \frac12 \Big(-\lambda_0 \pm\sqrt{\lambda_0^2+n}\Big) G_{-n}^\pR\Big)\xi_0.
\]
To understand this definition, recall from Definition \ref{def:CFT-SCFTnet} that $\A$ as a superconformal net contains the super-Virasoro net introduced in Example \ref{ex:superVir} as a conformal subnet. Then $\pR$ restricts to a Ramond representation of that subnet, and according to Proposition \ref{prop:CFT-supersymmetric} we have the corresponding field operators $G_n^\pR$ and $L_n^\pR$ acting on $\H_R$. Using the commutation relations \eqref{eq:superVir}, which hold in particular on the finite energy vectors like $\xi_{\pm n}$, we then find
\begin{align*}
Q \xi_{\pm n} =& Q \Big(L_{-n}^\pR + \frac12 \Big(-\lambda_0 \pm\sqrt{\lambda_0^2+n}\Big)G_{-n}^\pR\Big)\xi_0 \\
=& \lambda_0 \Big(L_{-n}^\pR - \frac12 \Big(-\lambda_0 \pm\sqrt{\lambda_0^2+n}\Big)G_{-n}^\pR\Big)\xi_0 + \Big(\frac{n}{2}G_{-n}^\pR + \Big(-\lambda_0 \pm\sqrt{\lambda_0^2+n}\Big) L_{-n}^\pR\Big)\xi_0\\
=& \pm \sqrt{\lambda_0^2+n} L_{-n}^\pR \xi_0  + \frac12 \Big(\lambda_0^2 + n \mp \lambda_0 \sqrt{\lambda_0^2+n}\Big)G_{-n}^\pR\xi_0\\
=& \pm \sqrt{\lambda_0^2+n}\Big(L_{-n}^\pR + \frac12 \Big(-\lambda_0 \pm\sqrt{\lambda_0^2+n}\Big) G_{-n}^\pR \Big) \xi_0\\
=& \pm \sqrt{\lambda_0^2+n}\xi_{\pm n}.
\end{align*}
So for every $n\in\N$, the corresponding $\xi_{\pm n}\in\H_{R,n}$ are eigenvectors with eigenvalues $\pm\lambda_n = \pm \sqrt{\lambda_0^2+n}$, respectively. Second, given two such $\xi_0 \perp \eta_0 \in\H_{R,0}\subset \ker(L_n^\pR)\cap \ker(G_n^\pR)$, with $\lambda_0\in\R$ the $Q$-eigenvalue of $\eta_0$ and arbitrary $\alpha\in\R$, we have
\begin{align*}
\langle (L_{-n}^\pR+\alpha G_{-n}^\pR)\xi_0,&(L_{-n}^\pR+\alpha G_{-n}^\pR)\eta_0 \rangle 
= \langle \xi_0, (L_{n}^\pR+\alpha G_{n}^\pR)(L_{-n}^\pR+\alpha G_{-n}^\pR)\eta_0 \rangle\\
=& \Big\langle \xi_0, \Big( 2(n+\alpha^2)L_0 + 3n\alpha G_0^\pR + (n^3 +4n^2 -n -1)\frac{c}{12}\unit \Big) \eta_0 \Big\rangle\\
=& \Big( 2(n+\alpha^2)(\lambda_0^2+\frac{c}{24}) + 3n\alpha \lambda_0 + (n^3 +4n^2 -n -1)\frac{c}{12} \Big) \langle \xi_0,\eta_0 \rangle
\end{align*}
which vanishes due to the orthogonality assumption $\xi_0 \perp \eta_0$, so $\xi_{\pm n}$ and $\eta_{\pm n}$ are again mutually orthogonal. These two facts together show that we have two copies of $\H_{R,0,+}$ in $\H_{R,n}$ on which $Q$ has eigenvalues $\lambda_{n}>0>\lambda_{-n}$ respectively, so they are in fact orthogonal and unambiguously denoted by $\H_{R,n,\pm}$.

(3) Consider now
\[
\H_{R,\operatorname{shift}}:= ...\oplus\H_{R,n,-}\oplus...\oplus \H_{R,1,-}\oplus\H_{R,0,+}\oplus\H_{R,1,+}\oplus...\oplus\H_{R,n,+}\oplus ....
\]
Let $u_s$ be the standard left shift on $\H_{R,\operatorname{shift}}$ (mapping every component isomorphically into the next one on its left, in particular $\xi_n\mapsto \xi_{n-1}$, for every $n\in\Z$ and $\xi_0\in\H_{R,0,+}$), extended by the identical action on the orthogonal complement of $\H_{R,\operatorname{shift}}$ in $\H_R$. It is clearly a unitary in $B(\H_R)$. Moreover, it has bounded commutator with $Q$: for all $\xi_0\in\H_{R,0,+}$ and $n\in\N$, we have
\[
(u_sQ u_s^* - Q) \xi_{\pm n} = u_s Q \xi_{\pm n+1} \mp \lambda_n \xi_{\pm n} = \pm (\lambda_{n\pm 1}-\lambda_n) \xi_{\pm n}, 
\]
where $|(\lambda_{n\pm 1}-\lambda_n)| = |\sqrt{h+n\pm 1}-\sqrt{h+n}|\le 1$, and
\[
(u_sQ u_s^* - Q) \psi = 0, \quad \psi\in \H_R\ominus \H_{R,\operatorname{shift}},
\]
so $u_s^*\in\dom(\delta)$ and hence $u_s\in\dom(\delta)$.

(4) Now that we have the above spectrum shift $u_s\in\dom(\delta)$, the desired index on $\H_{R,\operatorname{shift}}$ is well-defined, and we find
\[
 \ind_{\chi_{[0,\infty )}(Q)\H_R}(\chi_{[0,\infty )}(Q) u_s \chi_{[0,\infty )}(Q)) = \dim \H_{R,0,+} \not= 0.
\]

(5) Let us make a brief remark. Performing the above construction in the case of a graded representation $(\pR,\H_R,\Gamma_R)$ has a serious consequence: the shift unitary $u_s\in B(\H_R)$ is not even with respect to the natural grading $\Gamma_R$, so it cannot lie in the image $\bigvee_{I\in\I}\pR(\A^\gamma(I))\simeq B(\H_{R,+})\oplus B(\H_{R,-})$, but only in $B(\H_R)=B(\H_{R,+}\oplus\H_{R,-})$. In the case of ungraded $\pR$, however, $\pR$ remains irreducible in restriction to $\A^\gamma$, so $\bigvee_{I\in\I}\pR(\A^\gamma(I)) =B(\H_R)$, and $u_s\in B(\H_R)$ therefore lies in the von Neumann algebra generated by the $\pR(\A^\gamma(I))$, $I\in\I$. \boxy
\end{construction}

We summarize this construction in

\begin{theorem}\label{th:gen2-pairing-odd}
Let $\A$ be a superconformal net with a fixed supersymmetric ungraded Ramond irreducible representation $(\pi_R,\H_R)$ with supercharge $Q$, and let $\Delta\subset\Delta^0$ be a subset of localized endomorphisms with $\id \in \Delta$. Then $(\AA_\Delta,(\pR \circ \rho,\H_R),Q)_{\rho\in\Delta}$ is a family of odd $\theta$-summable spectral triples and the associated odd JLO cocycles have the following properties:
\begin{itemize}
\item[$(1)$] Suppose that, for fixed automorphism $\sigma\in\Delta$ and all $\rho\in\Delta$ with $\rho\not=\sigma$, $\pR \circ \rho$ and 
$\pR \circ \sigma$ are disjoint. Then for every $\rho\in \Delta$ with $\rho\not=\sigma$, we have $[{\tau_\rho}]\not=[{\tau_\sigma}]$.
\item[$(2)$] Suppose $\Delta\subset\Delta^1$ and that, for fixed automorphism $\sigma\in\Delta$ and all $\rho\in\Delta$ with $[\rho]\not=[\sigma]$, $\pR \circ \rho$ and $\pR\circ \sigma$ are disjoint. Then for every $\rho\in \Delta$, we have
\[
 [\rho]=[\sigma] \quad \textrm{iff} \quad [{\tau_\rho}]=[{\tau_\sigma}].
\]
\end{itemize}
In either case, the two non-equivalent cocycles are separated by pairing them with a suitable element from $K_1(\AA_\Delta)$.
\end{theorem}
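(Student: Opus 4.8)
The plan is to mirror the even-case argument of Theorem~\ref{th:gen2-pairing-even}, replacing the characteristic projection $p_{0,+}$ by the characteristic unitary $u_{0,+}$ of Definition~\ref{def:gen2-u0+} and the even index pairing by its odd counterpart in Theorem~\ref{th:JLO}(3). By Theorem~\ref{theorem:gen2-AAST} the family $(\AA_\Delta,(\pR\circ\rho,\H_R),Q)_{\rho\in\Delta}$ consists of odd $\theta$-summable spectral triples, so each $\tau_\rho$ pairs with $K_1(\AA_\Delta)$, and in both parts the separating element will be the class of the single unitary $v:=\sigma^{-1}(u_{0,+})\in W^*(\A^\gamma)$, which is well defined because $\sigma$ is an automorphism.

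For part~(1) I would first verify that $v\in\AA_\Delta$, \ie $\pR\circ\rho(v)\in\dom(\delta)$ for all $\rho\in\Delta$. For $\rho=\sigma$ one has $\pR\circ\sigma(v)=\pR(u_{0,+})=u_s\in\dom(\delta)$ by Construction~\ref{con:gen2-shift}. For $\rho\ne\sigma$, precomposition with the automorphism $\sigma^{-1}$ preserves intertwiner spaces, so the disjointness of $\pR\circ\rho$ and $\pR\circ\sigma$ forces $\pR\circ\rho\circ\sigma^{-1}$ to be disjoint from $\pR$; the characteristic property of $u_{0,+}$ then gives $\pR\circ\rho(v)=\unit\in\dom(\delta)$. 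Hence $v\in\AA_\Delta$. The odd index formula of Theorem~\ref{th:JLO}(3) combined with Construction~\ref{con:gen2-shift}(4) yields $\tau_\sigma(v)=\dim\H_{R,0,+}\ne0$, whereas $\tau_\rho(v)=0$ for $\rho\ne\sigma$ since $\pR\circ\rho(v)=\unit$ contributes a trivial Fredholm index. As the pairing depends only on $[v]\in K_1(\AA_\Delta)$, this yields $[\tau_\rho]\ne[\tau_\sigma]$.

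For part~(2) the ``only if'' implication $[\rho]=[\sigma]\Rightarrow[\tau_\rho]=[\tau_\sigma]$ is Proposition~\ref{prop:gen2-tau1}, using $\Delta\subset\Delta^1$ and that $\rho,\sigma$ are equivalent automorphisms. For the converse I pair again against $v=\sigma^{-1}(u_{0,+})$ and must show $v\in\AA_\Delta$. When $[\rho]\ne[\sigma]$ the part~(1) argument applies verbatim, giving $\pR\circ\rho(v)=\unit$ and $\tau_\rho(v)=0$. When $[\rho]=[\sigma]$, differentiable transportability lets me replace $\rho$ by an equivalent endomorphism localized in the same interval $I$ as $\sigma$ (conjugating by a charge transporter in $\AA_\Delta$), after which $\rho\sigma^{-1}=\Ad(w)$ with a \emph{local} intertwiner $w\in\A^\gamma(I)$; since $\Delta^1$ is closed under composition and inverses (Propositions~\ref{prop:gen2-tenscat} and~\ref{prop:gen2-invert}), $\Ad(w)\in\Delta^1_I$, so Proposition~\ref{prop:gen2-locAA}(2) places $w$ in $\AA_\Delta(I)$ and hence $\pR\circ\rho(v)=\Ad(\pR(w))(u_s)\in\dom(\delta)$. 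Thus $v\in\AA_\Delta$, and by the ``only if'' direction $\tau_\rho(v)=\tau_\sigma(v)=\dim\H_{R,0,+}\ne0$, so the classes are separated exactly when $[\rho]=[\sigma]$.

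The substantive work has already been absorbed into the preceding Construction~\ref{con:gen2-shift}, namely producing the spectrum-shift unitary $u_s$ with $[u_s,Q]$ bounded (so that $u_s\in\dom(\delta)$) and identifying its odd index as $\dim\H_{R,0,+}$. Granting that, the only genuine subtlety remaining inside the theorem is the membership $v=\sigma^{-1}(u_{0,+})\in\AA_\Delta$ in the equivalent case of part~(2); this is exactly where the hypothesis $\Delta\subset\Delta^1$ is indispensable, since it is what guarantees that the local intertwiner $w$ is $\delta$-differentiable and hence that $\pR(w)\in\dom(\delta)$.
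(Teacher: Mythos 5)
Your proposal is correct and follows essentially the same route as the paper: the authors' own proof simply states that parts (1)--(2) are obtained from Theorem \ref{th:gen2-pairing-even}(2)--(3) by replacing $p_{0,+}$ with $u_{0,+}$ and using the odd pairing of Theorem \ref{th:JLO}(3), which is precisely the substitution you carry out in detail (disjointness forcing $\pR\circ\rho\circ\sigma^{-1}(u_{0,+})=\unit$, the index of $u_s$ giving $\dim\H_{R,0,+}\ne 0$, and the $\Delta^1$ hypothesis handling the equivalent case via a differentiable local intertwiner).
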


\begin{proof}
We have a family of odd $\theta$-summable spectral triples according to Theorem \ref{theorem:gen2-AAST} and our standing assumption \eqref{eq:gen2-trace-class}. The remaining statements (1)-(2) are proved as those in Theorem \ref{th:gen2-pairing-even}(2)-(3) with $p_{0,+}$ replaced by $u_{0,+}$ using the pairing of the cocycles with $K_1(\AA_\Delta)$ in Theorem \ref{th:JLO}(3).

\end{proof}

\section{Examples from super-current algebra nets and\\ super-Virasoro nets}\label{sec:loop}

\subsection*{Basics of the  super-current algebra net}

In order to define the super-current algebra net and its representations, we need basically two ingredients: loop group nets and free fermion nets, and we start with the former ones.

Let $G$ be a simple simply connected simply laced compact Lie group, $\lg$ its simple Lie algebra and $\Loop G=\Cci(\S,G)$ its loop group. Let $d$ denote the dimension, $h^\vee$ the dual Coxeter number, $\langle\cdot,\cdot\rangle$ the basic inner or scalar product of $\lg$, a multiple of the Killing form normalized in such a way that $\langle \theta,\theta\rangle=2$ with $\theta$ the highest root of $\lg$. Furthermore, let $(e_a)_{a=1,...,d}$ be an orthonormal basis with respect to this scalar product and $f_{abc}$ the structure constants of $\lg$ with respect to $(e_a)_{a=1,...,d}$, \cf \cite[Sect.1\&2]{Kac}. 

The corresponding \emph{affine Kac-Moody algebra} or \emph{$\lg$-current algebra} is the complex Lie algebra $\hat{\Loop} \lg$ generated by $J^a_{n}$, $a=1,...,d$ and $n\in\Z$, $\hat{c}_\lg$ and $\hat{d}_\lg$, with commutation relations
\begin{align*}
 [J^a_{m},J^b_{n}] =& \sum_{c} \rmi f_{abc} J^c_{m+n} + \delta_{m+n,0} \delta_{a,b} m \hat{c}_G,\quad [J^a_{m}, \hat{c}_\lg] = 0,\\
 [J^a_{m},\hat{d}_\lg]= & m J^a_{m}, \quad [\hat{c}_\lg,\hat{d}_\lg]= 0,
\end{align*}
\cf \cite{KT} and also \cite[Sect.7]{Kac} for a more systematic construction based on the central extension of the complexified loop algebra $\Cci(\S,\lg_\C)$. The above scalar product on $\lg$ extends to a scalar product on the complexification $\lg_\C$ and thus gives rise to a scalar product on $\Cci(\S,\lg_\C)$ namely $f,g\mapsto \langle f,g \rangle :=\frac{1}{2\pi} \int_\S \langle f,g\rangle$; we write $\K$ for the corresponding Hilbert space completion, which consists of $\lg_\C$-valued square-integrable functions on $\S$, and $\K_I$ for the subspace generated by those with compact support in $I\in\I$. An orthonormal basis of $\K$ is given by $e_r^a:= e_a \iota^r$, $a=1,...,d$, $r\in\frac12+\Z$, with the smooth functions $\iota^r:z\in\S\setminus\{-1\}\mapsto c_r z^r\in\S $ and $c_r$ a suitable normalization scalar. Another orthonormal basis is obtained analogously with the choice $r\in\Z$. Notice that $\lg_\C$ can be identified with the Lie subalgebra of $\Cci(\S,\lg_\C)$ spanned by $(e_0^a)_{a=1,...,d}$ or the Lie subalgebra of $\hat{\Loop} \lg$ generated by $(-\rmi J^a_0)_{a=1,...,d}$.

Second, the \emph{$d$-fermion algebra} or \emph{(self-dual) CAR algebra} $\FF$ is the unital graded C*-algebra generated by odd elements $F(f)$, with $f\in\K$, satisfying $F(f)^*=F(\bar{f})$ and the anticommutation relations $[F(\bar{f}),F(g)]_+=\langle f,g \rangle \unit$, so in particular $\|F(f)\|=\|f\|$, \cf \cite{Ara,Boc} for further information. The elements $F^a_r:= F(e^a_r)$, with $a=1,...,d$ and $r\in\frac12+\Z$ or $\Z$ and commutation relations
\[
 [F_r^a,F_s^b]_+ = \delta_{r+s,0}\delta_{a,b} \unit.
\]
define two Lie superalgebras inside $\FF$, called the \emph{Neveu-Schwarz fermion algebra} $\FF_{NS}$ and \emph{Ramond fermion algebra} $\FF_R$, respectively.

We are interested in unitary highest weight irreducible representations of $\lg$. These representations are characterized by a level $l\in\N$ (the scalar value of the central element $\hat{c}_\lg$ in the representation) and an integral dominant weight $\lambda$ of $\lg$ (which determines the action of the maximal toral subalgebra, $\mathfrak{t_g}$ of $\lg$ on the highest weight vector) such that $\langle \lambda,\theta\rangle \le l$, and are denoted here by $(\pi^\lg_{l,\lambda}, \H^\lg_{l,\lambda})$, \cf \cite[Sect.10\&11]{Kac}, and we shall use the same symbol for the corresponding Hilbert space completions. For each level $l \in \N$, the set $\Phi^\lg_l$ of the allowed integral dominant weights is finite and hence the set of equivalence classes of representations is finite.

The unitary irreducible representations $(\pi^\FF,\H^\FF)$ of $\FF$ in which we are interested are those which have positive energy in restriction to one of the subalgebras $\FF_{NS}$ or $\FF_R$, \cf \cite[Sect.3\&4]{Boc2} and also \cite{Boc}. In the first case, such a representation is a Fock space representation and unique and called the Neveu-Schwarz or vacuum representation of $\FF$, denoted by $\pi^\FF_{NS}$. In the second case, it is determined by an irreducible representation space of the 0-mode Clifford algebra generated by $\{F_0^a: a=1,...,d\}$. Such a space is of dimension $2^{[d/2]}$; if $d$ is even, it carries a natural grading and is unique; if instead $d$ is odd, there are two inequivalent spaces and they are ungraded, but their direct sum can be equipped with a natural grading, \cf \cite[Sect.5.3]{GBVF} together with \cite[Sect.3.12]{Was1} for details on the construction. We write $\pi^\FF_R$ for the unique (if $d$ is even) or one of the two (if $d$ is odd) irreducible representations, called Ramond representation of $\FF$.

For $(\pi^\FF,\H^\FF)$ either the Neveu-Schwarz or Ramond representation of $\FF$, we obtain a unitary representation of the $\lg$-current algebra at level $h^\vee$ by
\begin{equation}\label{eq:loop-JF}
 J^{a,\pi^\FF}_n:= \pi^\FF(J^a_n) := \frac12 \sum_r \sum_{b,c=1}^d f_{abc} \pi^\FF(F^b_{n-r})\pi^\FF(F^c_r).
\end{equation}
The summation in $r$ is obviously over the set $\frac12 + \Z$ for Neveu-Schwarz or $\Z$ for Ramond type, respectively. This defines the \emph{diagonal currents} $J_n^{a,\pi^\lg_{l,\lambda}\otimes\pi^\FF} = J_n^{a,\pi^\lg_{l,\lambda}}\otimes\unit_\FF+ \unit_{l,\lambda} \otimes J_n^{a,\pi^\FF}$ on $\H^\lg_{l,\lambda}\otimes\H^\FF$, which we shall frequently use henceforth. The vacuum diagonal currents are those determined by $J_n^{a,\pi_0}$, with $\pi_0=\pi^\lg_{l,0}\otimes\pi^\FF_{NS}$. In either of the representations $\pi=\pi^\lg_{l,\lambda}, \pi^\FF,\pi^\lg_{l,\lambda}\otimes\pi^\FF$, the $\lg$-current modes satisfy linear energy bounds in terms of the corresponding conformal Hamiltonian $L_0^\pi$, \cf \cite[Sect.2]{BSM} and \cite[Sect.4]{CW05}. In complete analogy to Example \ref{ex:superVir} this permits us to define unbounded selfadjoint smeared fields on (the Hilbert space) $\H_\pi$ localized in $I\in\I_0$ with invariant core $\Cci(L_0^\pi)$ as the closures
\[
 J^{\pi}(f)= \left( \sum_{n\in\Z} \sum_{a=1}^d f_{n.a} J^{a,\pi}_n \right)^{-},\quad f\in\Cci(\S,\lg)_I,
\]
where $f_{n,a}$, $n\in\Z$, denote the rapidly decreasing Fourier modes of the $e_a$-component of $f$. 
If $\pi=\pi^\lg_{l,\lambda}\otimes\pi^\FF$, then we write $F^\pi(f)\equiv \unit_{l,\lambda}\otimes F^{\pi^\FF}(f)$ and \eqref{eq:loop-JF} implies the following important commutation relations:
\begin{equation}\label{eq:loop-JFcomm}
[J^\pi(f),F^\pi(g)] = \rmi F^\pi([f,g]), \quad f,g\in \Cci(\S,\lg).
\end{equation}

For the Lie group $G$ and given level $l\in\N$, the \emph{current algebra net of $G$ at level $l$} is defined as
\[
 \A_{G_l}(I) := \{\rme^{\rmi J^{\pi^\lg_{l,0}}(fX)}:\; f\in\Cci(\S)_I, X\in \lg\}'', \quad I\in\I,
\]
which is often introduced in the equivalent way $\pi^G_{l,0}(\{ x\in \Loop G: x|_{I'}=\unit\})''$ and called \emph{loop group net}, with $\pi^G_{l,0}$ the integration to $\Loop G$ of $\pi^\lg_{l,0}$ on the Hilbert space $\H^\lg_{l,0}$, \cf \cite[Sect.3.9]{FG}, and \cite{PS86,Tol99,Was}. The \emph{$d$-fermion net} is the graded-local net defined as
\[
 \F(I) := \{F^{\pi^\FF_{NS}}(fX):\; f\in\Cci(\S)_I, X\in \lg\}'', \quad I\in \I,
\]
whose grading comes from $\FF$, \cf \cite{Boc,Boc} for the definition and structure of the even subnet. Both $\A_{G_l}$ and $\F$ are known to be diffeomorphism-covariant. Thus the tensor product net
\begin{equation}\label{eq:loop-def}
 \A := \A_{G_l}\otimes \F
\end{equation}
is a graded-local diffeomorphism-covariant net (with grading $\gamma$ the product of the trivial grading on the first and the nontrivial above grading on the second factor), which we call the \emph{super-current algebra net of $G$ at level $l + h^\vee$}. Its central charge is $c=\frac{d}{2} + \frac{dl}{l+h^\vee}$.

\begin{lemma}\label{lem:loop-diagonal}
The algebras $\A(I)$ are generated by the fermionic fields $F^{\pi^\FF_{NS}}(f)$ together with the exponentials of the diagonal currents $\rme^{\rmi J^{\pi^\lg_{l,0}\otimes\pi^\FF_{NS}}(f)}$, with $f\in \Cci(\S,\lg)_I$.
\end{lemma}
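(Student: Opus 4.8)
The plan is to show that the von Neumann algebra $\M(I)$ generated by the fermionic fields $F^{\pi^\FF_{NS}}(f)=\unit\otimes F^{\pi^\FF_{NS}}(f)$ and the exponentiated diagonal currents $\rme^{\rmi J^{\pi^\lg_{l,0}\otimes\pi^\FF_{NS}}(f)}$, $f\in\Cci(\S,\lg)_I$, coincides with $\A(I)=\A_{G_l}(I)\otimes\F(I)$ (using \eqref{eq:loop-def}, so that each local algebra of the tensor product net is the tensor product of the corresponding local algebras). The whole argument rests on two localization facts that I take from the structure of the two tensor factors: for $f\in\Cci(\S,\lg)_I$ one has $\rme^{\rmi J^{\pi^\lg_{l,0}}(f)}\in\A_{G_l}(I)$ (the standard affiliation of the smeared currents with the loop group net) and $\rme^{\rmi J^{\pi^\FF_{NS}}(f)}\in\F(I)$ (which follows from the bilinear expression \eqref{eq:loop-JF} and the energy bounds exhibiting $J^{\pi^\FF_{NS}}(f)$ as a self-adjoint operator localized in $I$). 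I will use throughout that $J^{\pi^\lg_{l,0}}(f)\otimes\unit$ and $\unit\otimes J^{\pi^\FF_{NS}}(f)$ act on different tensor legs, hence strongly commute, so that the diagonal current — the closure on the common core $\Cci(L_0)$ of their sum — is self-adjoint and exponentiates to the product
\[
\rme^{\rmi J^{\pi^\lg_{l,0}\otimes\pi^\FF_{NS}}(f)}=\bigl(\rme^{\rmi J^{\pi^\lg_{l,0}}(f)}\otimes\unit\bigr)\bigl(\unit\otimes\rme^{\rmi J^{\pi^\FF_{NS}}(f)}\bigr).
\]

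The inclusion $\M(I)\subseteq\A(I)$ is then immediate: the fermion fields sit in $\unit\otimes\F(I)$, and by the two localization facts the right-hand side of the displayed factorization lies in $\A_{G_l}(I)\otimes\F(I)$. For the reverse inclusion I would proceed in two steps. First, restricting the generating fermion fields to arguments of the form $f_0X$ with $f_0\in\Cci(\S)_I$ and $X\in\lg$ recovers exactly the generators of $\F(I)$, so $\unit\otimes\F(I)\subseteq\M(I)$. Second, in the factorization the second factor $\unit\otimes\rme^{\rmi J^{\pi^\FF_{NS}}(f)}$ already lies in $\unit\otimes\F(I)\subseteq\M(I)$; multiplying the generator $\rme^{\rmi J^{\pi^\lg_{l,0}\otimes\pi^\FF_{NS}}(f)}$ of $\M(I)$ by its inverse yields $\rme^{\rmi J^{\pi^\lg_{l,0}}(f)}\otimes\unit\in\M(I)$ for every $f\in\Cci(\S,\lg)_I$. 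Specializing again to $f=f_0X$ produces all the defining generators $\rme^{\rmi J^{\pi^\lg_{l,0}}(f_0X)}\otimes\unit$ of $\A_{G_l}(I)\otimes\unit$, so $\A_{G_l}(I)\otimes\unit\subseteq\M(I)$. Note that for this direction I only need the (trivial) specialization to $f=f_0X$, not the full bosonic affiliation fact.

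Finally, since a von Neumann algebra tensor product is generated by its two natural subfactors, $\A(I)=\A_{G_l}(I)\otimes\F(I)$ is generated by $\A_{G_l}(I)\otimes\unit$ and $\unit\otimes\F(I)$, both of which are contained in $\M(I)$; hence $\A(I)\subseteq\M(I)$ and equality follows.

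I expect the main obstacle to be the fermionic localization fact $\rme^{\rmi J^{\pi^\FF_{NS}}(f)}\in\F(I)$, which is needed for the forward inclusion: one must verify that smearing the fermion bilinears in \eqref{eq:loop-JF} against a test function supported in $I$ produces a bona fide self-adjoint operator (via the linear energy bounds) whose unitary exponential is genuinely affiliated with the local algebra $\F(I)$ — rather than with some a priori larger algebra — which uses graded locality and Haag duality for the fermion net. Everything else reduces to the formal manipulation of strongly commuting exponentials and the two restrictions to generators of the form $f=f_0X$.
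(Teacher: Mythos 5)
Your proof is correct and follows essentially the same route as the paper's (one-sentence) argument: factor the diagonal exponential as $\rme^{\rmi J^{\pi^\lg_{l,0}}(f)}\otimes\rme^{\rmi J^{\pi^\FF_{NS}}(f)}$, observe that $\rme^{\rmi J^{\pi^\FF_{NS}}(f)}\in\F(I)$ via the commutation relation \eqref{eq:loop-JFcomm} and graded Haag duality, and divide it out to recover the generators of $\A_{G_l}(I)\otimes\unit$. You have correctly identified the fermionic localization fact as the only genuinely nontrivial ingredient, which is exactly what the paper cites.
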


This follows immediately from the facts that $\rme^{\rmi J^{\pi^\lg_{l,0}\otimes\pi^\FF_{NS}}(f)}=\rme^{\rmi J^{\pi^\lg_{l,0}}(f)} \otimes \rme^{\rmi J^{\pi^\FF_{NS}}(f)}$ and that $\rme^{\rmi J^{\pi^\FF_{NS}}(f)}\in\F(I)$ as a consequence of \eqref{eq:loop-JFcomm} and graded Haag duality.

The super-Sugawara construction \cite[Sect.2\&4]{KT} in the representation $\pi=\pi^\lg_{l,\lambda}\otimes\pi^\FF$ gives now rise to a representation of the super-Virasoro algebra with central charge $c=\frac{d}{2} + \frac{dl}{l+h^\vee}$ and with generators
\begin{equation}\label{eq:free-Sugawara}
\begin{aligned}
 G_r^{\pi} :=&  \frac{1}{\sqrt{l+g}} \sum_{a,m} :\left( J^{a,\pi^G}_m + \frac13 J^{a,\pi^\F}_m \right) F^{a,\pi^\F}_{r-m}:\\ 
L_n^{\pi} :=& \frac{1}{2(l+g)} \sum_{a} :\left( \sum_m J^{a,\pi^G}_m J^{a,\pi^G}_{n-m} - \sum_r r F^{a,\pi^\F}_{r}F^{a,\pi^\F}_{n-r}\right):,
\end{aligned}
\end{equation}
where $:\cdot:$ stands for normal ordering, and $n\in\Z$ and $r\in\frac12 +\Z$ or $\Z$ (for $\pi^\FF$ of Neveu-Schwarz or Ramond type, respectively). Considering then for $\pi$ the (diagonal) vacuum representation $\pi_0=\pi^\lg_{l,0}\otimes\pi^\FF_{NS}$, we notice that the Lie algebra representation of the Virasoro algebra with generators $L_n^{\pi_0}$ integrates to the projective unitary representation $U$ of $\Diff^{(\infty)}$ \cite{Tol99} which turns out to be the one making the net $\A$ diffeomorphism-covariant. We would like to show that $\A$ is superconformal in the sense of Definition \ref{def:CFT-SCFTnet}. 

The procedure is standard (\cf \eg \cite[Sect.6.3]{CKL}), but for the reader's convenience we provide a sketch, and for the sake of readability we drop the superscripts $\pi$ which stand for the vacuum representation $\pi_0$ here. The (graded) commutation relations between the fields $L,G$ and $J,F$ in the above super-Sugawara construction are written in \cite[(2.5)]{KT}. In terms of smeared fields, a straight-forward computation yields (on the core $\Cci(L_0)$):
\begin{equation}
\begin{gathered}
\, [J(f),L(g)] = -\rmi J(f'g), \quad [F(f),L(g)] = -\rmi F(f'g) -\frac{\rmi}{2} F(fg'),\\
[J(f),G(g)] = -\rmi F(f'g), \quad [F(f),G(g)] = J(fg),
\end{gathered}
\end{equation}
for $f\in\Cci(\S,\lg)_I$ and $g\in\Cci(\S)_I$ with $I\in\I_\R$; notice that the first relation holds actually for every $I\in\I$. Thus, $L(g),G(g)$ (graded-) commute with $J(f),F(f)$ if $\supp f \cap \supp g = \emptyset$. Moreover, the four fields satisfy linear energy bounds w.r.t. $L_0$:
\begin{equation}\label{eq:loop-LEB}
\begin{gathered}
\| F(f) \xi \| = \frac{1}{\sqrt{2}} \|f\|_2 , \quad
\| J(f) \xi \| \le c_J(f) \|(\unit+L_0)\xi\|, \\
\| G(g) \xi \| \le c_G(g) \|(\unit+L_0)^{1/2}\xi\|, \quad
\| L(g) \xi \| \le c_L(g) \|(\unit+L_0)\xi\|,
\end{gathered}
\end{equation}
for all $\xi\in\Cci(L_0)$ and with suitable positive real constants $c_J(f),c_G(g),c_L(g)$ depending only on $f,g$, \cf \cite[(2.21)\&(2.23)]{BSM} and \cite[Sect.6.3]{CKL}. Following then \cite[Sect.6.3]{CKL} and applying \cite[Thm.3.1]{DF}, we see that $\rme^{\rmi L(f)}$ and $\rme^{\rmi G(f)}$ (graded-) commute with $\rme^{\rmi J(f)}$ and $F(f)$, so they lie in $Z\A(I)'Z^*=\A(I')$. They generate the super-Virasoro net 
$\A_{\SVir,c}$, which, by rotation covariance, satisfies
\[
 U(\Diff^{(\infty)}_I) \subset \A_{\SVir,c}(I) \subset \A(I), \quad I\in\I,
\]
and we can summarize the preceding discussion in

\begin{proposition}\label{def:loop-net}
The super-current algebra net of $G$ at level $l+h^\vee$, $\A = \A_{G_l}\otimes \F$, is a graded-local superconformal net.
\end{proposition}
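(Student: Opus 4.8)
The plan is to verify the two requirements of Definition~\ref{def:CFT-SCFTnet}: that $\A$ contains the super-Virasoro net $\A_{\SVir,c}$, with central charge $c=\frac{d}{2}+\frac{dl}{l+h^\vee}$, as a $\PSL$-covariant subnet, and that the projective representation $U$ of $\Diff^{(\infty)}$ making $\A$ diffeomorphism-covariant is localized inside it, namely $U(\Diff^{(\infty)}_I)\subset\A_{\SVir,c}(I)\subset\A(I)$ for every $I\in\I$. That $\A=\A_{G_l}\otimes\F$ is itself graded-local is immediate, the tensor product of the local net $\A_{G_l}$ with the graded-local net $\F$ being graded-local for the product grading $\gamma$; diffeomorphism covariance is inherited from the two factors, which are each diffeomorphism-covariant. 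Since $\A_{\SVir,c}$ is a conformal net in its own right, its $\PSL$-covariance (implemented by the same $U$) will be automatic once the inclusion of nets is established, so the whole content lies in the localization statement.

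First I would invoke the super-Sugawara construction \eqref{eq:free-Sugawara} in the vacuum representation $\pi_0=\pi^\lg_{l,0}\otimes\pi^\FF_{NS}$ to produce super-Virasoro generators $L_n,G_r$ on the vacuum space of $\A$, with precisely the central charge above; the associated smeared fields $L(g),G(g)$ have $\Cci(L_0)$ as a common invariant core. By \cite{Tol99} the Virasoro subrepresentation generated by the $L_n$ integrates to exactly the representation $U$ of $\Diff^{(\infty)}$ that makes $\A$ diffeomorphism-covariant. Hence the entire statement reduces to the localization assertion $\rme^{\rmi L(g)},\rme^{\rmi G(g)}\in\A(I)$ whenever $\supp g\subset I$: from this $\A_{\SVir,c}(I)\subset\A(I)$ by von Neumann generation, while $U(\Diff^{(\infty)}_I)\subset\A_{\SVir,c}(I)$ follows because $U(\Diff^{(\infty)}_I)$ is generated by the $\rme^{\rmi L(g)}$ with $\supp g\subset I$.

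The decisive step is to promote the formal (graded) commutation relations to operator-algebraic localization. On $\Cci(L_0)$ one records the commutators of $L(g),G(g)$ with the generating fields $J(f),F(f)$; these are again smeared fields built from the local differential expressions $f'g,fg',fg$, so they vanish as soon as $\supp f\cap\supp g=\emptyset$. Combined with the linear energy bounds \eqref{eq:loop-LEB} for all four fields, this formal commutativity is upgraded to genuine (graded-)commutativity of the self-adjoint operators, hence of their bounded functions, via the commutator theorem \cite[Thm.3.1]{DF}. Thus for $g$ with $\supp g\subset I'$ the operators $\rme^{\rmi L(g)},\rme^{\rmi G(g)}$ (graded-)commute with $\rme^{\rmi J(f)}$ and $F(f)$ for every $f$ with $\supp f\subset I$, which by Lemma~\ref{lem:loop-diagonal} generate $\A(I)$; graded Haag duality (property $(3)$ of Section~\ref{sec:CFT}) then places them in $Z\A(I)'Z^*=\A(I')$. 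Relabeling the interval (every element of $\I$ is the complement of another) yields the localization claimed above.

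I expect the genuine difficulty to reside entirely in this analytic upgrade rather than in the formal algebra: the (anti)commutation relations among smeared fields are cheap, but controlling the unbounded fields well enough to apply \cite[Thm.3.1]{DF} --- establishing the linear energy bounds for $J,F,L,G$ simultaneously and verifying the hypotheses of the commutator theorem on the common core --- is the substantive point, and it is exactly there that the super-Sugawara relations of \cite{KT} (and with them the assumptions that $G$ is simply laced and that the level carries the supersymmetric shift $l+h^\vee$) do the real work.
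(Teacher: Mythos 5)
Your proposal is correct and follows essentially the same route as the paper's sketch: the super-Sugawara construction \eqref{eq:free-Sugawara} in the diagonal vacuum representation, the (graded) commutation relations of $L(g),G(g)$ with $J(f),F(f)$ on the core $\Cci(L_0)$, the linear energy bounds \eqref{eq:loop-LEB}, the commutator theorem of \cite[Thm.3.1]{DF} to upgrade formal to genuine graded commutativity, and graded Haag duality to conclude $\rme^{\rmi L(g)},\rme^{\rmi G(g)}\in\A(I')$. You correctly identify the analytic upgrade via energy bounds and \cite{DF} as the substantive step, exactly as in the paper.
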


The Ramond representation $(\pi^\FF_R,\H^\FF_R)$ of $\FF$ gives rise to an irreducible Ramond representation of the net $\F$ in the sense of Theorem \ref{th:CFT-RS-N}, \cf \cite{Boc,Boc2} together with \cite[Sect.6.4]{CKL}. 
We write $(\pR,\H_R)$, where 
$\H_R=\H_{l,0}\otimes\H^\FF_R$, for the corresponding irreducible Ramond representation of $\A$ which is the identity representation on the first component $\A_{G_l}$. As explained above, it is graded iff $d$ is even, and the eigenspace of minimal energy $\H_{R,0}$ has dimension $2^{[d/2]}$, where $[d/2]$ denotes the integral part of $d/2$. 
The restriction of $\pR$ to $\A^\gamma$ on $\H_R$ will be denoted again by $\pR$ (as in the preceding sections). According to Proposition \ref{prop:CFT-supersymmetric} and the explanation at the beginning of Section \ref{sec:gen-ST}, $\pR$ is \emph{supersymmetric} in the sense that it contains an (odd) operator $Q$ such that $Q^2= L_0^\pR - \lambda \unit$, namely $Q:=G_0^\pR$ with $\lambda=c/24$ (following the notation in Proposition \ref{prop:CFT-supersymmetric}). The trace-class condition  \eqref{eq:gen2-trace-class} can be shown in every locally normal representation of $\A$, in particular for $\pR$ and the vacuum representation, implying the split property of $\A$ \cite{DLR}. As can be seen in \cite[Sect.III.13]{Was1} or \cite{KT} or by a straight-forward computation, the following commutation relations hold on the invariant core $\Cci(L_0^\pi)$:
\begin{equation}\label{eq:loop-deltaJF}
 [Q,F^{\pR}(f)]_+ = \frac{1}{ \sqrt{l+h^\vee}} J^{\pR}(f),
 \quad [Q,J^{\pR}(f)]=  \rmi \sqrt{l+h^\vee} F^{\pR}(f'), \quad f\in\Cci(\S,\lg).
\end{equation}

Let us now come to the localized endomorphisms of $C^*(\A^\gamma)$.
Fix an interval $I_0\in\I$, let $\hat{I}_0'\subset \S^{(\infty)}$ (the latter regarded as multiplicative abelian group) denote the preimage of $I_0'$ under the universal covering map, and fix a smooth function $\hat{\phi}: \S^{(\infty)} \ra \R$ which is locally constant on $\hat{I}_0'$ and satisfies $\hat{\phi}(t\rme^{\rmi2\pi}) = \hat{\phi}(t) + 2\pi$. Given $z\in \ZZ(G)$, let $X_z\in\lg$ be a fixed choice such that $\exp(2\pi X_z) = z$, which is possible since $G$ is compact and connected whence $\exp$ is surjective; in particular, if $z=\unit$, we choose $X_z=0$. As $\ZZ(G)$ is the intersection of all maximal tori of $G$, $X_z$ lies actually in the maximal toral subalgebra $\mathfrak{t_g}\subset\lg$. For every $I\in\I$, let $\phi_{I}\in\Cci(\S)$ be a function with support in a proper interval of $\S$, coinciding with $\hat{\phi}$ modulo $2\pi\Z$ on $I$ (regarding $I$ as a subset of $\S^{(\infty)}$ via the universal covering map). Then the formula
\begin{equation}\label{eq:loop-rho-def}
(\pi_{l,z})_I(x) := \Ad\Big(\rme^{\rmi J^{\pi^\lg_{l,0}}(\phi_{I} X_z)}\Big)(x), \quad x\in \A_{G_l}(I),
\end{equation}
defines a representation, independent of the explicit choice of $X_z$ and $\phi_{I}$. It can be shown that it gives rise to a localized representation of the net $\A_{G_l}$ localized in $I_0$ and corresponding to  \cite[Sect.3.8\&3.9]{FG}. The equivalence class 
$[\pi_{l,z}]$ does not depend on the choice of the interval $I_0$ nor on the function $\hat{\phi}$ but only on $z \in \ZZ(G)$. 
Moreover, $\pi_{l,z}$ is equivalent to the vacuum representation if and only if $z$ is the neutral element of $G$.

A similar definition can be given on the component $\F$: Notice that
\[
\RR: f \in \K \mapsto \Ad(\exp( \hat{\phi} X_z)) (f) \in\K
\]
extends to a well-defined automorphism because $\hat{\phi}(t\rme^{\rmi2\pi}) = \hat{\phi}(t) + 2\pi$ and \linebreak $\exp(2\pi X_z) = z\in \ZZ(G)$ by construction. This enables us to define
\[
\pi_{\F,z}( F^{\pi^\FF_{NS}}(f)):= F^{\pi^\FF_{NS}}(\RR f) = F^{\pi^\FF_{NS}}( \Ad(\exp( \hat{\phi} X_z))( f)),
\quad f\in\K_I,
\]
for every $I\in\I$. Therefore,
\begin{align*}
\rme^{\rmi J^{\pi^\FF_{NS}}(\phi_{I} X_z)} F^{\pi^\FF_{NS}}(f) \rme^{- \rmi J^{\pi^\FF_{NS}}(\phi_{I} X_z)}
=& F^{\pi^\FF_{NS}}( \Ad(\exp(\phi_{I} X_z) )(f))\\ 
=& F^{\pi^\FF_{NS}}( \Ad(\exp(\hat{\phi} X_z)) (f))\\
=& \pi_{\F,z}( F^{\pi^\FF_{NS}}(f)),
\end{align*}
where the first line follows from a standard integration argument for the covariant $\Uone$-action on the fermionic currents (similar to \cite[Sect.6.3]{CKL}). Thus we obtain a representation $\pi_{\F,z}$ of $\F$, which restricted to $\A_{G_{h^\vee}}$ obviously has the form \eqref{eq:loop-rho-def}:
\[
(\pi_{\F,z})_I(x) = \Ad\Big(\rme^{\rmi J^{\pi^\FF_{NS}}(\phi_{I} X_z)}\Big)(x), \quad x\in \A_{G_{h^\vee}}(I), I\in\I.
\]
Let us write $\pi_z$ for the subrepresentation on the even subspace $\H^\Gamma\subset\H$ of the restriction of $\pi_{l,z}\otimes\pi_{\F,z}$ to $\A^\gamma$. It is clear from the definition that $\pi_z$ is actually a localized automorphism of the net $\A^\gamma$ localized in $I_0$.

For the sake of readability, we shall henceforth drop the superscripts $\pi_0$ on the fields in case $\pi$ is the vacuum (Neveu-Schwarz) representation $\pi_0$.

We shall consider two different types of localized endomorphisms of $C^*(\A^\gamma)$:
\begin{itemize}
\item[(1)] We have just constructed a family of localized automorphisms of $\A^\gamma=\A_{G_l}\otimes\F^\gamma$, namely the above $\pi_z$, with $z\in\ZZ(G)$ and localized in the given fixed $I_0$. We denote the corresponding localized endomorphisms of $C^*(\A^\gamma)$ by $\rho_z$. Since they have statistical dimension $1$ they are automorphisms, \cf Proposition \ref{prop:CFT-endoms}. Moreover, we have $[\rho_z]=[\rho_y]$ if and 
only if  $z=y$, and $[\rho_z \rho_y] = [\rho_{zy}]$. 

By construction their 2-variable cocycles (charge transporters), for given $I\in\I$ containing the closure of $I_0$, are the Weyl unitaries 
\[
z(\rho_z,g)=\iota_I\Big(\rme^{\rmi J((\hat{\phi}-\hat{\phi}\circ g)X_z)}\Big), \quad g\in \U_{I_0,I}.
\]
\item[(2)] As explained above, the irreducible unitary representations of $\hat{\Loop} \lg$ at level $l$ are $\pi^\lg_{l,\lambda}$ with 
$\lambda \in \Phi^\lg_l$ the corresponding integral dominant weight; their integration gives rise to a locally normal representation of $\A_{G_l}$, which can be implemented in the vacuum representation by an endomorphism localized in $I_0$, \cf \cite[Sect.3.8]{FG}. Tensoring with the vacuum representation of $\F^\gamma$, it defines a localized endomorphism of the product net $\A^\gamma$, and we write  $\rho_\lambda$ for the corresponding localized endomorphism of $C^*(\A^\gamma)$. 
\end{itemize}

We now define two subsets of $\Delta^0$ corresponding to the above two types of localized endomorphisms of $C^*(\A^\gamma)$, which we shall use in order to find applications of the main theorems in Section \ref{sec:gen-pairing}. Starting from those two, further examples can be constructed in a rather straight-forward manner left over to the reader.

\begin{definition}\label{def:loop-Delta}
Letting $\rho_z$ denote the endomorphisms of type (1) above, define 
\[
\tilde{\Delta} :=\textrm{semigroup generated by }\{\rho_z:z\in\ZZ(G)\}.
\]
Letting $\rho_\lambda$ denote the endomorphisms of type (2) above, define
\[
\Delta := \{\rho_\lambda: \lambda \in\Phi^\lg_l\}.
\]
\end{definition}

From the following proposition it shall become clear that $\tilde{\Delta} \subset \Delta^1$ in the sense of Definition \ref{def:gen2-Delta}. On the other hand, we cannot expect the inclusion $\Delta\subset\Delta^1$ to hold for all choices of the representatives $\rho_\lambda$ of type (2) above. Actually, we don't even know if this inclusion can hold for a suitable choice of those endomorphisms.

\subsection*{Derivation and spectral triples for the super-current algebra net}

Let us now investigate the derivation $(\delta,\dom(\delta))$ coming from the above ``supercharge" $Q=G_0^\pR$ in the Ramond representation $(\pR,\H_R)$:

\begin{proposition}\label{th:Loop-domain}
The set $\dom(\delta)\cap \pi_R( \A^\gamma (I))$ is a $\sigma$-weakly dense $*$-subalgebra of\linebreak $\pi_R(\A^\gamma(I))$ and hence 
$\pi_R^{-1}(\dom(\delta)) \cap \A^\gamma(I)$ is a $\sigma$-weakly dense $*$-subalgebra of $\A^\gamma(I)$, for all $I\in \I$.
 The elements $\rme ^{\rmi J(f X)}$ with $f  \in \Cci(\S)_I, X \in\lg$, lie in $\pi_R^{-1}(\dom(\delta)) \cap \A^\gamma(I)$, for all $I\in \I$.
In particular, the charge transporters of $\tilde{\Delta}$ are in $\pi_R^{-1}(\dom(\delta))$, so in fact $\tilde{\Delta}\subset \Delta^1$.
\end{proposition}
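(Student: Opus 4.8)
The plan is to derive the final assertion from part (2) of the proposition, the transportability criterion of Proposition \ref{prop:gen2-Delta}, and the categorical closure of Proposition \ref{prop:gen2-tenscat}. First I would treat the generators $\rho_z$, $z\in\ZZ(G)$, of the semigroup $\tilde\Delta$, all localized in the fixed interval $I_0$. Their charge transporters are the Weyl-type unitaries
\[
z(\rho_z,g)=\iota_I\Big(\rme^{\rmi J((\hat\phi-\hat\phi\circ g)X_z)}\Big),\quad g\in\U_{I_0,I},
\]
where $J$ is the (vacuum) diagonal current and $X_z\in\mathfrak{t}_\lg\subset\lg$. The key observation is that $\psi_g:=\hat\phi-\hat\phi\circ g$ descends to a smooth function on $\S$: the monodromy $\hat\phi(t\rme^{\rmi2\pi})=\hat\phi(t)+2\pi$ together with the commutation of $g$ with the deck transformation $t\mapsto t\rme^{\rmi2\pi}$ makes the two $2\pi$-shifts cancel. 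Moreover, for $g$ in the set $\mathcal U_{I_0}$ of Proposition \ref{prop:gen2-Delta}, the transporter is local, with $\psi_g$ supported in a proper interval $I_1\subset\S\setminus\{p_{I_0'}\}$ containing $\bar I_0\cup\dot g\bar I_0$, so that $\psi_g X_z\in\Cci(\S,\lg)_{I_1}$.

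With this local control, part (2) of the proposition applies directly to each $\rme^{\rmi J(\psi_g X_z)}$ and yields $\pi_R(z(\rho_z,g))\in\dom(\delta)$ for all $g\in\mathcal U_{I_0}$. By the ``in particular'' part of Proposition \ref{prop:gen2-Delta}, this is exactly the condition characterizing membership in $\Delta^1_{I_0}$, so $\rho_z\in\Delta^1_{I_0}$ for every $z\in\ZZ(G)$.

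It remains to pass from the generators to the whole semigroup. Each $\rho_z$ is a localized automorphism, hence of statistical dimension one and in particular of finite statistical dimension. By Proposition \ref{prop:gen2-tenscat} the differentiably transportable endomorphisms of finite statistical dimension localized in $I_0$ are closed under composition, so the semigroup $\tilde\Delta$ they generate is contained in $\Delta^1_{I_0}\subset\Delta^1$. Unwinding Definition \ref{def:gen2-Delta}, this is precisely the statement that $\pi_R(z(\rho,g))\in\dom(\delta)$ for every $\rho\in\tilde\Delta$ and every $g\in\PSI$, i.e. that the charge transporters of $\tilde\Delta$ lie in $\pi_R^{-1}(\dom(\delta))$, as claimed.

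The only genuinely non-routine point is the treatment of arbitrary $g\in\PSI$, where $\psi_g$ need no longer be supported in a proper interval and the explicit local formula for the transporter breaks down. This is handled structurally rather than by computation: Proposition \ref{prop:gen2-Delta} removes the need to check non-local $g$ by reducing everything to $g\in\mathcal U_{I_0}$, while Proposition \ref{prop:gen2-tenscat} (equivalently, the cocycle identity \eqref{twovarcocycle2} expressing $z(\rho\sigma,g)$ as a product of transporters already lying in $\pi_R^{-1}(\dom(\delta))$) propagates differentiable transportability along products. The one small verification to keep honest is that $\psi_g$ really descends to $\Cci(\S)$ and stays compactly supported on $\mathcal U_{I_0}$, which is exactly where the monodromy property of $\hat\phi$ is used.
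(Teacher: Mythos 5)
There is a genuine gap: your proposal only addresses the last sentence of the proposition and takes the rest as a hypothesis. You write that the plan is ``to derive the final assertion from part (2) of the proposition,'' i.e.\ from the claim that $\rme^{\rmi J(fX)}\in\pi_R^{-1}(\dom(\delta))\cap\A^\gamma(I)$ --- but that claim is itself part of what has to be proved, and it is the analytically hard part. In the paper's proof this occupies steps (1) and (2): one first shows that the resolvents $(J^{\pR}(fX)-\lambda)^{-1}$ lie in $\dom(\delta)$, using the energy bounds \eqref{eq:loop-LEB}, the invariance of the cores $\dom((L_0^{\pR})^n)$, and the commutation relation $[Q,J^{\pR}(f)]=\rmi\sqrt{l+h^\vee}\,F^{\pR}(f')$ from \eqref{eq:loop-deltaJF}, extending from large $|\Im\lambda|$ to all of $\C\setminus\R$ by holomorphic functional calculus; one then passes from resolvents to exponentials by a Laplace-transform/spectral-calculus computation, obtaining the explicit formula $\delta(\rme^{\rmi J^{\pR}(fX)})=\sqrt{l+h^\vee}\,F^{\pR}(f'X)\rme^{\rmi J^{\pR}(fX)}$. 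None of this appears in your argument, so the exponential claim is assumed rather than established. Likewise the first assertion of the proposition --- that $\dom(\delta)\cap\pi_R(\A^\gamma(I))$ is $\sigma$-weakly dense in $\pi_R(\A^\gamma(I))$ --- is not addressed at all; in the paper this requires the additional steps of showing that the regularized fermion bilinears $F(f_1X_1)(J(f_1X_1)+\rmi s_1)^{-1}F(f_2X_2)(J(f_2X_2)+\rmi s_2)^{-1}$ lie in $\dom(\delta)$, taking strong limits $s_i\to\infty$ to capture $F(f_1X_1)F(f_2X_2)$ in the weak closure, and invoking Lemma \ref{lem:loop-diagonal} to see that these elements together with the exponentials of the diagonal currents generate $\A^\gamma(I)$, before extending from $\I_\R$ to all of $\I$ by rotation covariance.

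The part you do prove --- passing from the exponential claim to $\tilde\Delta\subset\Delta^1$ --- is essentially correct and agrees with the paper's (implicit) reasoning: the monodromy of $\hat\phi$ makes $\hat\phi-\hat\phi\circ g$ descend to a function on $\S$ that is (up to irrelevant constants, since $\exp(2\pi X_z)=z$ is central) localized for $g$ in the neighborhood $\mathcal U_{I_0}$ of Proposition \ref{prop:gen2-Delta}, that proposition reduces the check to such $g$, and Proposition \ref{prop:gen2-tenscat} (or the cocycle identity \eqref{twovarcocycle2}) handles products. But since this deduction rests entirely on the unproven middle assertion, the proposal as it stands does not constitute a proof of the proposition.
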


\begin{proof}
In order to understand the proof we notice that 
\[
(\pR)_I(\rme ^{\rmi J(fX)}) = \rme^{\rmi J^\pR(fX)},\quad f\in \Cci(\S)_I, I\in \I,
\]
and
\[
(\pR)_I(F(fX)) = F^\pR(fX),\quad f\in \Cci(\S)_I,I\in \I_\R,
\]
and we may therefore work with the expressions on the right-hand side. Moreover, in the first three steps of the present proof, we drop the superscripts $\pR$ for the sake of readability, so $J(fX)$, $F(fX)$, $L_0$ mean actually $J^\pR(fX)$, $F^\pR(fX)$ and $L_0^\pR$. 

For an arbitrary  interval $I\in\I_\R$, we shall prove that first the resolvents and then also the exponentials of $J^\pR(fX)$ lie in $\dom(\delta)\cap \pi_R( \A^\gamma (I))$, and in the third step we shall deal with the fermion fields. The fourth step concludes the proof.

(1) Recall that as always the subspace $\Cci(L_0)\subset\H_R$ is a common invariant core for all $J(f X)$, $F(fX)$, and $Q$, and  owing to the energy bounds \eqref{eq:loop-LEB}, these operators map $\dom(L_0^n)$ to $\dom(L_0^{n-1})$, for every $n\in\N$. Moreover, from \cite[Prop.4.3]{CHKL} we know that $(J(f X)-\lambda)^{-1}$ preserves the joint cores $\dom(L_0^n)$, for $n=1,2$, if $|\Im \lambda|$ is sufficiently large. Thus, for $\psi\in\dom(L_0^2)$, we have
\[
Q J(fX) (J(f X)-\lambda)^{-1}\psi = J(fX)Q (J(f X)-\lambda)^{-1}\psi + \rmi \sqrt{l+g}F(f' X) (J(f X)-\lambda)^{-1}\psi,
\]
by \eqref{eq:loop-deltaJF}. Adding $-\lambda Q(J(f X)-\lambda)^{-1}\psi$ on both sides and multiplying then by $(J(f X)-\lambda)^{-1}$ yields
\[
(J(f X)-\lambda)^{-1}Q\psi = Q (J(f X)-\lambda)^{-1}\psi +  (J(f X)-\lambda)^{-1} \rmi \sqrt{l+g} F(f' X) (J(f X)-\lambda)^{-1} \psi,
\]
so $(J(f X)-\lambda)^{-1} \in \dom (\delta)$ if $|\Im \lambda|$ is sufficiently large, and 
\[
\delta( (J(f X)-\lambda)^{-1}) =  - \rmi\sqrt{l+g} F(f' X) (J(f X)-\lambda)^{-2},
\]
using $[F(f' X), J(f X)]=0$, \cf \eqref{eq:loop-JFcomm}. This holds actually for every $\lambda\in\C\setminus\R$, which can be seen as follows: Suppose it holds for $\lambda_0$. The spectrum of $(J(f)-\lambda_0)^{-1}$ lies in $(\R-\rmi \Im\lambda_0)^{-1}$. Consider the complex map 
\[
\phi(z):=\frac{1}{z^{-1} +(\lambda_0- \lambda)}
\]
which is defined and analytic on an open neighborhood of $(\R-\rmi\Im\lambda_0)^{-1}\subset\C$ and has its only pole in $(\lambda-\lambda_0)^{-1}$. Holomorphic functional calculus then gives
\[
(J(f)-\lambda)^{-1} = \phi \Big( (J(f)-\lambda_0)^{-1}\Big) \in\dom(\delta),
\]
since $\dom(\delta)$ is closed under holomorphic functional calculus -- an adaptation of \cite[Prop.3.2.29]{BR}.

(2) By the same reasoning (with the spectral projections of $J(fX)$ denoted by\linebreak
$P_{J(fX)}(\cdot)$), also the exponentials $\rme ^{\rmi J(f X)}$ preserve $\dom(L_0^2)$. Using Borel functional calculus, Laplace transformation, step (1) and the selfadjointness of $Q$, $J(fX)$ and $F(f'X)$, we get, for all $\phi,\psi \in \dom(L_0^2)$,
\begin{align*}
\langle \phi, Q \rme ^{\rmi J(f X)}\psi \rangle 
=& \int_\R \rme ^{\rmi t} \rmd \langle Q\phi ,P_{J(fX)}(t)\psi\rangle\\ 
=& \int_\R \int_{\R+\rmi} \rme^{\rmi \lambda} (t-\lambda)^{-1} \rmd \lambda \rmd \langle Q\phi, P_{J(fX)}(t)\psi\rangle (t)\\
=&  \int_{\R+\rmi} \int_\R \rme^{\rmi \lambda} (t-\lambda)^{-1} \rmd \langle Q\phi ,P_{J(fX)}(t)\psi\rangle \rmd \lambda \\
= & \int_{\R+\rmi}  \rme^{\rmi \lambda} (\langle \phi, (J(f X)-\lambda)^{-1} Q \psi\rangle + \langle \phi, \delta((J(f X)-\lambda)^{-1} ) \psi\rangle) \rmd \lambda \\
=& \int_{\R+\rmi}  \rme^{\rmi \lambda} (\langle \phi, (J(f X)-\lambda)^{-1} Q \psi\rangle \rmd \lambda \\
&+  \int_{\R+\rmi} \rme^{\rmi \lambda} \langle \phi, \rmi \sqrt{l+g}F(f' X)(J(f X)-\lambda)^{-2} \psi\rangle ) \rmd \lambda \\
=&  \int_\R \int_{\R+\rmi}\rme^{\rmi \lambda} (t-\lambda)^{-1} \rmd \lambda \rmd \langle \phi, P_{J(fX)}(t) Q \psi\rangle \\
&+   \int_\R \int_{\R+\rmi}\rme^{\rmi \lambda}  (t-\lambda)^{-2} \rmd \lambda \rmd \langle \phi ,\rmi\sqrt{l+g} F(f'X) P_{J(fX)}(t)\psi\rangle \\
=&  \int_\R  \rme ^{\rmi t} \rmd \langle \phi, P_{J(fX)}(t) Q \psi\rangle
+  \int_\R \rme ^{\rmi t}\rmd \langle \phi ,\sqrt{l+g} F(f'X) P_{J(fX)}(t)\psi\rangle\\
=& \langle \phi,\rme ^{\rmi J(f X)} Q \psi\rangle +  \langle \phi, \sqrt{l+g} F(f' X) \rme ^{\rmi J(f X)} \psi\rangle 
\end{align*}
so $\rme ^{\rmi J(f X)} \in\dom (\delta)$ and $\delta(\rme ^{\rmi J(f X)}) =  \sqrt{l+g} F(f' X)\rme ^{\rmi J(f X)}$.

(3) Part (1) together with \eqref{eq:loop-deltaJF} shows only that 
\begin{equation}\label{eq:Loop-str-dense}
-s_1s_2 F(f_1X_1)(J(f_1X_1)+\rmi s_1)^{-1}F(f_2X_2)(J(f_2X_2)+\rmi s_2)^{-1}\in\dom(\delta),\quad s_1,s_2 \not= 0,
\end{equation}
\cf \cite{BG}.
To conclude the proof of denseness we would like to see that $F(f_1X_1)F(f_2X_2)$ is in the weak closure of the $*$-algebra generated by these operators. Using the spectral decomposition of the selfadjoint unbounded $J(fX)$, we obtain, for every $\psi\in\dom(L_0^2)$,
\[
 -\rmi s(J(fX)-\rmi s)^{-1} \psi = \int_\R \frac{-\rmi s}{t-\rmi s} \rmd P_{J(fX)} (t) \psi \ra \int_\R \rmd P_{J(fX)} (t) \psi = \psi,\quad s\ra \infty,
\]
by means of the dominated convergence theorem applied to the fact that $t\mapsto -\rmi s/(t-\rmi s)$ is bounded by the $P_{J(fX)}$-integrable function $t\mapsto 1$ and converges pointwise to 1, for $s\ra\infty$. Hence $-\rmi s (J(fX)-\rmi s)^{-1}\ra \unit$ strongly. Considering in the same way the limit $s_1,s_2\ra \infty$ in \eqref{eq:Loop-str-dense}, we infer that $F(f_1X_1)F(f_2X_2)$ lies in the strong closure of $\dom(\delta)$.

Thus the weak closure of the algebra generated by the elements
\[
\rme ^{\rmi J(f_1X_1)},\quad
F(f_1X_1)(J(f_1X_1)+\rmi)^{-1}F(f_2X_2)(J(f_2X_2)+\rmi)^{-1}
\]
in $\dom(\delta)$, with $f_i \in \Cci(\S)_I, X_i\in\lg$, coincides with $\pR(\A^\gamma(I))$ owing to Proposition \ref{prop:CFT-supersymmetric} and Lemma \ref{lem:loop-diagonal} -- and this proves the claimed denseness.

(4) Let us return to writing the superscripts ``$\pR$". So far we have the denseness of $\dom(\delta)\cap \pi_R( \A^\gamma (I)) \subset\pi_R(\A^\gamma(I))$, for $I\in\I_\R$. To obtain the statement for arbitrary $I\in\I$, we only have to use the rotation invariance of $\dom(\delta)$ together with covariance and local normality, which holds for the restriction of the general soliton $\pR$ to $\A^\gamma$. Concerning the explicit exponentials, we have proved that  $\rme ^{\rmi J^\pR(f X)}\in \dom(\delta) \cap \pR^{-1}(\A^\gamma(I))$, for $f \in \Cci(\S)_I, X\in\lg$ with $I\in\I_\R$, which is equivalent to saying $\rme ^{\rmi J(f X)}\in\pi_R^{-1}(\dom(\delta)) \cap \A^\gamma(I)$. Using rotation covariance, we obtain the statement actually for every $I\in\I$.
\end{proof}

We can now understand our above choice of the localized representations $\pi_z$: if instead of $\pi_{l,z}\otimes \pi_{\F,z}^\gamma$ we had taken $\pi_{l,z}\otimes \id_{\F^\gamma}$, then the cocycles would be $\rme^{\rmi J^{\pi^\lg_{l,0}}((\hat{\phi}-\hat{\phi}\circ g)X_z)}$, with $g\in {\mathcal U}_{I_0,I}$, which in general are not in $\pR^{-1}(\dom(\delta))$, so the corresponding automorphisms would not be differentiably transportable.

\subsection*{The index pairing for the super-current algebra net}

From the preceding proposition, Definition \ref{def:loop-Delta} and the theory of Section \ref{sec:gen-ST} we obtain two families of nontrivial $\theta$-summable spectral triples $(\AA_\Delta, (\pR\circ \rho,\H_R),Q)_{\rho\in\Delta}$ and $(\AA_{\tilde{\Delta}}, (\pR\circ \rho,\H_R),Q)_{\rho\in\tilde{\Delta}}$ over the locally convex algebras $\AA_\Delta$ and $\AA_{\tilde{\Delta}}$. They are even or odd, depending on whether $\pR$ is graded or ungraded, which in turn depends on whether $d$ is even or odd.
The pairing with K-theory needs several cases covered by Theorem \ref{th:gen2-pairing-even}(3) and \ref{th:gen2-pairing-odd}(2) for the algebra $\AA_{\tilde{\Delta}}$, and then by Theorem \ref{th:gen2-pairing-even}(2) and \ref{th:gen2-pairing-odd}(1) for the algebra $\AA_{\Delta}$:

\begin{theorem}\label{th:loop-even}
\begin{itemize}
\item[$(1)$] Suppose $d=\dim(G)$ is even. Then we have a family of even spectral triples $(\AA_{\tilde{\Delta}}, (\pR\circ \rho,\H_R),Q)_{\rho\in\tilde{\Delta}}$ with  $\AA_{\tilde{\Delta}}(I)= \pR^{-1}(\dom(\delta))\cap\A^\gamma(I)\subset \A^\gamma(I)$ \linebreak $\sigma$-weakly dense, for all $I\in\I$. Moreover, for $\rho,\sigma\in\tilde{\Delta}$, we have
\[
[\rho]=[\sigma] \quad\textrm{iff}\quad [\tau_{\rho}]=[\tau_{\sigma}].
\]
\item[$(2)$] Suppose $d$ is odd. Then we have a family of odd spectral triples $(\AA_{\tilde{\Delta}}, (\pR\circ\rho,\H_R),Q)_{\rho\in\tilde{\Delta}}$ with  $\AA_{\tilde{\Delta}}(I)= \pR^{-1}(\dom(\delta))\cap\A^\gamma(I)\subset \A^\gamma(I)$ $\sigma$-weakly dense, for all $I\in\I$. Moreover, for $\rho,\sigma\in\tilde{\Delta}$, we have
\[
[\rho]=[\sigma] \quad\textrm{iff}\quad [\tau_{\rho}]=[\tau_{\sigma}].
\]
\end{itemize}
For every $d$, the cocycles can be obtained as pullback cocycles $\tau_{\rho}=\rho^*\tau_{\id}$, for all $\rho\in\tilde{\Delta}$, since $\tilde{\Delta}$ forms a semigroup.
\end{theorem}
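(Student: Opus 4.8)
The plan is to reduce both parts to the general index-pairing theorems of Section~\ref{sec:gen-pairing} by verifying their hypotheses for the semigroup $\tilde{\Delta}$. First I would record the structural facts already in hand: by Proposition~\ref{th:Loop-domain} we have $\tilde{\Delta}\subset\Delta^1$ and $\pR^{-1}(\dom(\delta))\cap\A^\gamma(I)$ is $\sigma$-weakly dense in $\A^\gamma(I)$ for every $I\in\I$; combined with Proposition~\ref{prop:gen2-locAA}(1) this immediately yields the asserted local algebra $\AA_{\tilde{\Delta}}(I)=\pR^{-1}(\dom(\delta))\cap\A^\gamma(I)$ together with its density. Each generator $\rho_z$ is a localized automorphism with $d(\rho_z)=1$ by Proposition~\ref{prop:CFT-endoms}, and by the fusion relation $[\rho_z\rho_y]=[\rho_{zy}]$ every element of $\tilde{\Delta}$ is again an automorphism, equivalent to some $\rho_w$ with $w\in\ZZ(G)$. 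Since $\pR$ is graded precisely when $d$ is even and ungraded precisely when $d$ is odd, Theorem~\ref{theorem:gen2-AAST} produces the even family of spectral triples in part~(1) and the odd family in part~(2).

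The decisive point is the disjointness hypothesis. For $\rho,\sigma\in\tilde{\Delta}$ with $[\rho]\neq[\sigma]$ I would exploit that $\pR$ is the identity representation on the first tensor factor $\A_{G_l}$, while $\rho_w$ restricts on $\A_{G_l}\otimes\unit\subset\A^\gamma$ to the automorphism implementing $\pi_{l,w}$. Writing $\rho\sim\rho_z$ and $\sigma\sim\rho_{z'}$ with $z\neq z'$, the restriction of $\pR\circ\rho$ (resp. $\pR\circ\sigma$) to $\A_{G_l}\otimes\unit$ is then a multiple of the irreducible representation $\pi_{l,z}$ (resp. $\pi_{l,z'}$). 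These are inequivalent for $z\neq z'$, hence disjoint, and disjointness on the subalgebra $\A_{G_l}\otimes\unit$ forces disjointness of the extensions $\pR\circ\rho$ and $\pR\circ\sigma$ on all of $\A^\gamma$, since any intertwiner for the latter is in particular an intertwiner for the former and must therefore vanish. Applying this with $\sigma$ fixed but arbitrary, the disjointness required by Theorem~\ref{th:gen2-pairing-even}(3) in the even case and by Theorem~\ref{th:gen2-pairing-odd}(2) in the odd case holds, and invoking those theorems gives $[\rho]=[\sigma]\Leftrightarrow[\tau_\rho]=[\tau_\sigma]$ in both parts.

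Finally, the pullback assertion is a formal consequence of the semigroup structure: since $\tilde{\Delta}$ is a semigroup and $\AA_{\tilde{\Delta}}$ is $\tilde{\Delta}$-invariant by Remark~\ref{rem:gen2-AA}(1), each $\rho\in\tilde{\Delta}$ acts as a continuous endomorphism of $\AA_{\tilde{\Delta}}$ and induces a pullback $\rho^*$ on $HE^\bullet(\AA_{\tilde{\Delta}})$. The identity $\tau_{\rho\sigma}=\sigma^*\tau_\rho$ recorded after Theorem~\ref{theorem:gen2-AAST} specializes at $\rho=\id$ to $\tau_\sigma=\sigma^*\tau_{\id}$, i.e. $\tau_\rho=\rho^*\tau_{\id}$ after renaming.

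I expect the main obstacle to be the careful justification, underlying the disjointness step, that passing to the even subnet and to the concrete representatives $\rho_z$ (defined via the even subspace of $\pi_{l,z}\otimes\pi_{\F,z}$) does not disturb the clean ``identity on $\A_{G_l}$'' picture: one must confirm that the restriction of $\pR\circ\rho_z$ to $\A_{G_l}\otimes\unit$ really is a multiple of $\pi_{l,z}$, keeping track of the fermionic factor and the grading projection, so that the inequivalence of the loop-group sectors $\pi_{l,z}$ genuinely propagates to the full representations on $\H_R$.
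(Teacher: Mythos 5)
Your proposal is correct and follows essentially the same route as the paper: density and the identification of $\AA_{\tilde{\Delta}}(I)$ via Propositions \ref{th:Loop-domain} and \ref{prop:gen2-locAA}(1), disjointness of $\pR\circ\rho$ and $\pR\circ\sigma$ deduced from the inequivalence of the irreducible loop-group sectors $\pi_{l,z}$ in the first tensor factor, and then Theorem \ref{th:gen2-pairing-even}(3) resp.\ Theorem \ref{th:gen2-pairing-odd}(2) together with Remark \ref{rem:gen2-AA}(1) for the pullback claim. The paper phrases the disjointness step as $\pR\circ\rho\simeq\pi_{l,z}\otimes\pi_1$ rather than via restriction to $\A_{G_l}\otimes\unit$, but this is the same underlying argument.
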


\begin{proof}
First recall that $\pi_R$ is graded iff $d$ is even.
The equality $\AA_{\tilde{\Delta}}(I)= \pR^{-1}(\dom(\delta))\cap\A^\gamma(I)$ follows from Proposition \ref{prop:gen2-locAA}(1) because $\tilde{\Delta}$ consists of differentiably transportable endomorphisms, and the denseness has been shown in Proposition \ref{th:Loop-domain}. This gives rise to spectral triples according to the general theory. Suppose $d$ is even. If $\rho,\sigma$ are inequivalent, then there are $y\not=z\in\ZZ(G)$ such that $\pR\circ\rho \simeq \pi_{l,z}\otimes \pi_1$ and $\pR\circ\sigma \simeq \pi_{l,y}\otimes \pi_2$, with $\pi_1,\pi_2$ two graded representations of $\F^\gamma$. Since $\pi_{l,z}$ and $\pi_{l,y}$ are irreducible and mutually inequivalent, $\pR\circ\rho$ and $\pR\circ\sigma$ must be disjoint. Since all elements in $\tilde{\Delta}$ are moreover automorphisms, we can apply Theorem \ref{th:gen2-pairing-even}(3) to obtain the complete separation of the cocycles corresponding to inequivalent endomorphisms. 

If $d$ instead is odd, then the representations $\pR\circ\rho$ are ungraded and hence the spectral triples odd. In that case we apply Theorem \ref{th:gen2-pairing-odd}(2) to obtain statement (2). The pullback statement becomes obvious using Remark \ref{rem:gen2-AA}(1).
\end{proof}

\begin{theorem}\label{th:loop-even2}
\begin{itemize}
\item[$(1)$] Suppose $d=\dim(G)$ is even. Then we have a family of even spectral triples $(\AA_\Delta, (\pR\circ \rho,\H_R),Q)_{\rho\in\Delta}$, and $\rho_\mu^{-1}(p_{0,+})\in \AA_\Delta$ if $\rho_\mu\in\Delta$ is an automorphism, and
\[
\tau_{\rho_\lambda}(\rho_\mu^{-1}(p_{0,+}))  = \dim (\H_{R,0,+})\delta_{\rho_\lambda,\rho_\mu}= 2^{d/2-1}\delta_{\lambda,\mu}.
\]
Hence, we can separate every entire cohomology class $[\tau_{\rho_\lambda}]$ from every $[\tau_{\rho_\mu}]$ with
$\rho_\mu\in\Delta$ an automorphism using the finite family 
\[
\{[\rho_\mu^{-1}(p_{0,+})] : \; \rho_\mu\in\Delta \; \textup{is an automorphism}\} \subset K_0(\AA_\Delta).
\]
\item[$(2)$] Suppose $d$ is odd. Then we have a family of odd spectral triples\linebreak $(\AA_\Delta, (\pR\circ \rho,\H_R),Q)_{\rho\in\Delta}$, and $\rho_\mu^{-1}(u_{0,+})\in \AA_\Delta$ if $\rho_\mu\in\Delta$ is an automorphism, and
\[
\tau_{\rho_\lambda}(\rho_\mu^{-1}(p_{0,+}))  = \dim (\H_{R,0,+})\delta_{\rho_\lambda,\rho_\mu}= 2^{(d-1)/2-1}\delta_{\lambda,\mu}.
\]
Hence, we can separate every cocycle $\tau_{\rho_\lambda}$ from every $\tau_{\rho_\mu}$ with $\rho_\mu\in\Delta$ an automorphism  using the finite family 
\[
\{[\rho_\mu^{-1}(u_{0,+})] : \; \rho_\mu\in\Delta \; \textup{is an automorphism}\} \subset K_1(\AA_\Delta).
\]
\end{itemize}
\end{theorem}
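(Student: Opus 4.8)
The plan is to deduce both parts from the general separation theorems of Section~\ref{sec:gen-pairing}, taking the fixed automorphism $\sigma$ there to be $\rho_\mu$. Theorem~\ref{theorem:gen2-AAST} supplies, for any $\Delta\subset\Delta^0$ with $\id\in\Delta$, the family $(\AA_\Delta,(\pR\circ\rho,\H_R),Q)_{\rho\in\Delta}$ of $\theta$-summable spectral triples, even exactly when $\pR$ is graded, i.e.\ when $d$ is even, and odd when $d$ is odd. Since $\Delta=\{\rho_\lambda:\lambda\in\Phi^\lg_l\}$ is \emph{not} known to lie in $\Delta^1$, I cannot use the parts of Theorems~\ref{th:gen2-pairing-even} and~\ref{th:gen2-pairing-odd} that assume differentiable transportability; instead I would invoke Theorem~\ref{th:gen2-pairing-even}(2) for case~$(1)$ and Theorem~\ref{th:gen2-pairing-odd}(1) for case~$(2)$, both of which only require disjointness of $\pR\circ\rho$ and $\pR\circ\sigma$ for $\rho\ne\sigma$. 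By Proposition~\ref{prop:CFT-endoms}, $\rho_\mu$ is an automorphism precisely when $d(\rho_\mu)=d(\pi^\lg_{l,\mu})=1$, and for such $\mu$ the inverse $\rho_\mu^{-1}$ exists in $\End(C^*(\A^\gamma))$; in particular $\id=\rho_0$ is such an automorphism, so the family of separating classes is non-empty.

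The first hypothesis to check is disjointness of $\pR\circ\rho_\lambda$ and $\pR\circ\rho_\mu$ for $\lambda\ne\mu$. Because $\pR$ is the identity on the $\A_{G_l}$-factor and $\rho_\lambda$ acts there as $\pi^\lg_{l,\lambda}$ (tensored with the vacuum on $\F^\gamma$), any intertwiner between $\pR\circ\rho_\lambda$ and $\pR\circ\rho_\mu$ would in particular intertwine the amplified first-factor representations $\pi^\lg_{l,\lambda}\otimes\unit$ and $\pi^\lg_{l,\mu}\otimes\unit$. The sectors $\pi^\lg_{l,\lambda}$ and $\pi^\lg_{l,\mu}$ are inequivalent irreducible representations of the type~$III$ factors $\A_{G_l}(I)$, hence disjoint, so their amplifications are disjoint and the intertwiner vanishes. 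As distinct weights yield inequivalent (in particular distinct) endomorphisms, this is exactly the disjointness required for $\rho\ne\sigma=\rho_\mu$.

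Granting this, the proofs of Theorem~\ref{th:gen2-pairing-even}(2) and Theorem~\ref{th:gen2-pairing-odd}(1) go through with separating element $\rho_\mu^{-1}(p_{0,+})$ in the even case and $\rho_\mu^{-1}(u_{0,+})$ in the odd case. For $\lambda\ne\mu$ one has $\pR\circ\rho_\lambda(\rho_\mu^{-1}(p_{0,+}))=\pR\circ\rho_\lambda\rho_\mu^{-1}(p_{0,+})=0$, since $p_{0,+}\le s(\pR)$ and $\pR\circ\rho_\lambda\rho_\mu^{-1}$ is disjoint from $\pR$ ($\rho_\mu$ being an automorphism), whereas for $\lambda=\mu$ it equals $\pR(p_{0,+})\in\dom(\delta)$ by Proposition~\ref{prop:gen2-special-p0+}(2); hence $\rho_\mu^{-1}(p_{0,+})\in\AA_\Delta$. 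The even index formula of Theorem~\ref{th:JLO}(3) combined with \eqref{eq:gen2-p-even} then gives $\tau_{\rho_\lambda}(\rho_\mu^{-1}(p_{0,+}))=\dim(\H_{R,0,+})\,\delta_{\lambda,\mu}$. In the odd case the analogous computation uses that $\pR\circ\rho_\lambda\rho_\mu^{-1}(u_{0,+})=\unit$ for $\lambda\ne\mu$ (as $\pR\circ\rho_\lambda\rho_\mu^{-1}$ annihilates $s(\pR)$) and $\pR(u_{0,+})=u_s$ for $\lambda=\mu$, so Construction~\ref{con:gen2-shift}(4) yields $\tau_{\rho_\lambda}(\rho_\mu^{-1}(u_{0,+}))=\dim(\H_{R,0,+})\,\delta_{\lambda,\mu}$. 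Either way, pairing $[\tau_{\rho_\lambda}]$ against the class of $\rho_\mu^{-1}(p_{0,+})$ (resp.\ $\rho_\mu^{-1}(u_{0,+})$) separates it from $[\tau_{\rho_\mu}]$, and the finite family of these classes, indexed by the automorphisms $\rho_\mu\in\Delta$, achieves the stated separation.

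The remaining and, I expect, most delicate point is the explicit value of $\dim(\H_{R,0,+})$. From the description of $\pR$ after Proposition~\ref{def:loop-net}, the minimal-energy subspace $\H_{R,0}$ reduces to the zero-mode Clifford module (the loop-group vacuum contributing only its one-dimensional ground state), so $\dim\H_{R,0}=2^{[d/2]}$. For $d$ even, $\Gamma_R$ restricts on this module to the chirality operator, which splits it into two half-spinor spaces of equal dimension $2^{d/2-1}=\dim\H_{R,0,+}$. For $d$ odd, $\H_{R,0,+}$ is instead the $+\lambda_0$-eigenspace of $Q=G_0^\pR$ with $\lambda_0=\sqrt{\lw(\pR)-c/24}$, and the splitting is controlled by $\dim\H_{R,0,+}-\dim\H_{R,0,-}=\lambda_0^{-1}\tr(Q|_{\H_{R,0}})$; here only the zero-mode term of the super-Sugawara series \eqref{eq:free-Sugawara} survives on $\H_{R,0}$ (the loop currents act trivially on the loop vacuum), so $Q|_{\H_{R,0}}$ is proportional to the totally antisymmetric cubic expression $\sum_{a,b,c}f_{abc}F_0^aF_0^bF_0^c$ in the fermion zero modes, which is traceless in the Clifford module; the split is therefore even and $\dim\H_{R,0,+}=2^{(d-1)/2-1}$. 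The hard part will be making this tracelessness, and hence the equal splitting in the odd case, rigorous and verifying that it persists for all the groups under consideration.
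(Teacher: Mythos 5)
Your proposal follows the paper's own route exactly: verify that $\pR\circ\rho_\lambda$ and $\pR\circ\rho_\mu$ are disjoint for $\lambda\neq\mu$ (the paper simply says this ``follows directly from the definition''; your tensor-factor elaboration is correct), then invoke Theorem \ref{th:gen2-pairing-even}(2) in the even case and Theorem \ref{th:gen2-pairing-odd}(1) in the odd case --- correctly avoiding the parts of those theorems that require $\Delta\subset\Delta^1$ --- and finally read off $\dim(\H_{R,0,+})$ from the representation of the fermion zero-mode Clifford algebra, which is all the paper does as well. The one place where you go beyond the paper is the attempted justification of the equal splitting in the odd case, and there your tracelessness argument has a genuine defect: for $d=3$ (i.e.\ $G=\operatorname{SU}(2)$, which belongs to the class of groups considered) the totally antisymmetric cubic element $\sum_{a,b,c}f_{abc}F_0^aF_0^bF_0^c$ is proportional to the volume element of the three-dimensional Clifford algebra, which acts as a nonzero scalar on each of the two irreducible zero-mode modules; it is therefore not traceless, $Q|_{\H_{R,0}}$ becomes the scalar $\pm\lambda_0$, and the $+\lambda_0$-eigenspace has dimension $0$ or $2^{(d-1)/2}$ rather than the claimed $2^{(d-1)/2-1}$. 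For odd $d>3$ every cubic monomial in distinct generators is traceless on the irreducible module (its complementary product under multiplication by the volume element is a nonscalar even element), so your argument does yield the equal split there; the case $d=3$ must be treated separately or excluded, a point on which the paper's own one-line justification is silent.
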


\begin{proof}
It follows directly from the definition that for $\lambda\not=\mu\in\Phi^\lg_l$, the two irreducible representations $\pR\circ\rho_\lambda$ and $\pR\circ\rho_\mu$ are disjoint. Thus we can apply Theorem \ref{th:gen2-pairing-even}(2) obtaining a separation of $\tau_{\rho_\lambda}$ from all $\tau_{\rho_\mu}$, $\rho_\mu\in\Delta$ an automorphism, by means of $\rho_\mu^{-1}(p_{0,+})$. Since by construction in Definition \ref{def:loop-Delta}, $\Delta$ contains no other endomorphism equivalent to $\rho_\lambda$, we obtain the final equivalence and separation statement of the theorem. The precise dimension of $\H_{R,0,+}$ is a consequence of the representation structure of the Clifford algebra of fermion 0-modes as explained above.

The odd case goes in complete analogy appealing to Theorem \ref{th:gen2-pairing-odd}(1).
\end{proof}

\subsection*{The super-Virasoro net and the case $\ind_{\H_{R,+}}Q \not= 0$}

Looking at Theorem \ref{th:gen2-pairing-even}, we would also like to study an example of case (1), where no differentiability condition is fulfilled and moreover the sectors are not automorphic but instead  $\ind_{\H_{R,+}}Q \not=0$. Consider the super-Virasoro net from Example \ref{ex:superVir} with $c=1$. We recall from \cite[Sect.6]{CKL} that in this case there is a unique graded irreducible Ramond representation $\pR$ of $\A_{\SVir,1}$ with lowest energy $h=\frac{1}{24}$ and $\ind_{\H_{R,+}}Q =1$. We have to check for which irreducible endomorphisms $\rho$ of $\A_{\SVir,1}^\gamma$ the representations $\pR\circ\rho$ and $\pR$ are disjoint.

In the notation of \cite[Sect.7]{CKL}, $\pR=\pi_{R+}\oplus\pi_{R-}$ corresponds to the representation $(121)_+\oplus (121)_-$ of the coset of $\operatorname{SU}(2)_4 \subset \operatorname{SU}(2)_2\otimes \operatorname{SU}(2)_2$ identified with our net $\A_{\SVir,1}^\gamma$. We first compute the S-matrix of $\A_{\SVir,1}^\gamma$ using the formulae stated there for the coset construction of $\A_{\SVir,1}^\gamma$, and then we compute the fusion matrices using the results for the S-matrix and the Verlinde formula. We would like to separate the sectors of $\A_{\SVir,1}^\gamma$ from the vacuum sector. We cannot expect this to be possible for all sectors, but we may seek those for which it is, \ie those localized endomorphisms $\rho$ for which the two representations $\pi_{R\pm}\circ\rho$ are disjoint from $\pi_{R+}\oplus \pi_{R-}$. In terms of the fusion matrix, this means the entries $N_{\pi_{R\pm},\rho}^{\pi_{R\pm}}$ have to be all $0$. Write $\Delta\setminus \{\id\}$ for the set of those localized endomorphisms $\rho$, assuming exactly one representative endomorphism per equivalence class. Identifying every sector $(jkl)$ (in the notation of \cite[Sect.7]{CKL}) with that representative and in particular $(000)$ with $\id\in\Delta$, explicit computations yield:
\begin{equation}\label{eq:loop-SVir}
\Delta:= \{ (000),\; (110),\; (130),\; (031),\; (121)_+,\; (121)_-,\; (141),\; (231) \}.
\end{equation}

Define the associated algebra $\AA_\Delta$ and the spectral triples $(\AA_\Delta,\pR\rho,G_0^\pR)_{\rho\in\Delta}$ as in Definition \ref{def:gen2-AA}. Theorem \ref{th:gen2-pairing-even}(1) with $\sigma=\id$ can now be applied yielding

\begin{proposition}\label{prop:loop-SVir}
For the net $\A_{\SVir,1}$, the irreducible graded Ramond representation $\pR$ with lowest energy $1/24$ and the endomorphism set $\Delta$ from \eqref{eq:loop-SVir}, we have a family of JLO cocycles ${\tau_\rho}$ associated to the even $\theta$-summable spectral triples 
$(\AA_\Delta,(\pR \circ \rho,\H_R),G_0^\pR)$, $\rho\in\Delta$. They can be separated from $\tau_{\id}$ as follows:
\[
{\tau_\rho} (s(\pR)) =  \left\lbrace\begin{array}{l@{\; :\; }l}
1 & \rho=\id\\
0 & \rho\not= \id .
\end{array}\right.
\]
\end{proposition}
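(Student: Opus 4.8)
The plan is to reduce the statement to a single application of Theorem \ref{th:gen2-pairing-even}(1) with the choice $\sigma = \id$, since the substantial work has already been invested in producing the list \eqref{eq:loop-SVir}. First I would record the structural prerequisites: $\A_{\SVir,1}$ is superconformal (being the super-Virasoro net itself, with $c = 1 < 3/2$), and the Standing Assumption holds in $\pR$, i.e.\ the split property and the trace-class condition \eqref{eq:gen2-trace-class} are available for super-Virasoro nets from \cite{CKL,CHKL}. Hence Theorem \ref{theorem:gen2-AAST} produces the family of even $\theta$-summable spectral triples $(\AA_\Delta,(\pR\circ\rho,\H_R),G_0^\pR)_{\rho\in\Delta}$, and Theorem \ref{th:JLO} attaches to them the even JLO cocycles $\tau_\rho$. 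This settles the existence part of the statement outright.

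Next I would check the two hypotheses of Theorem \ref{th:gen2-pairing-even}(1) for $\sigma=\id$. The index hypothesis $\tau_{\id}(\unit) = \ind(Q_+) \neq 0$ is exactly the recalled fact from \cite[Sect.6]{CKL} that the graded Ramond representation of the $c=1$ super-Virasoro net with lowest energy $h = 1/24 = c/24$ has $\ind_{\H_{R,+}} Q = 1$; equivalently, since $Q^2 = L_0^\pR - \tfrac{c}{24}\unit$ has minimal eigenvalue $0$, the kernel of $G_0^\pR$ is the minimal-energy space and the relevant index is $1$. The disjointness hypothesis — that $\pR\circ\rho$ and $\pR$ are disjoint whenever $[\rho]\neq[\id]$ — is precisely the defining property of $\Delta$: because $\pR|_{\A^\gamma} = \pi_{R+}\oplus\pi_{R-}$, we have $\pR\circ\rho = (\pi_{R+}\circ\rho)\oplus(\pi_{R-}\circ\rho)$, and disjointness of this from $\pi_{R+}\oplus\pi_{R-}$ is equivalent to the vanishing of all four fusion coefficients $N_{\pi_{R\pm},\rho}^{\pi_{R\pm}}$, which is the selection criterion singling out \eqref{eq:loop-SVir}. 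Since each element of $\Delta$ is one representative per equivalence class, $[\rho]\neq[\id]$ coincides with $\rho\neq\id$.

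With both hypotheses in hand I would invoke Theorem \ref{th:gen2-pairing-even}(1) and its proof. The central support $s(\pR) = s(\pR\circ\id)$ satisfies $\pR\circ\rho(s(\pR)) = 0$ for $\rho\neq\id$ and $\pR(s(\pR)) = \unit$; in either case this element lies in $\dom(\delta)$, so $s(\pR)\in\AA_\Delta$ is a projection. Evaluating the even pairing of Theorem \ref{th:JLO}(3) gives $\tau_\rho(s(\pR)) = \ind(\pR\circ\rho(s(\pR))_- Q_+ \pR\circ\rho(s(\pR))_+) = 0$ for $\rho\neq\id$, the operator having zero domain and codomain, whereas $\tau_{\id}(s(\pR)) = \tau_{\id}(\unit) = \ind(Q_+) = 1$. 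As the pairing depends only on $[s(\pR)]\in K_0(\AA_\Delta)$, this is exactly the claimed formula, and it simultaneously separates each $[\tau_\rho]$ with $\rho\neq\id$ from $[\tau_{\id}]$.

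The step I expect to be the genuine obstacle is not in the proof of the proposition itself, which is a bookkeeping application of the general machinery, but in the input that yields \eqref{eq:loop-SVir}: computing the $S$-matrix of $\A_{\SVir,1}^\gamma$ from its coset realization inside $\operatorname{SU}(2)_2\otimes\operatorname{SU}(2)_2$ and then running the Verlinde formula to determine exactly which sectors have the four coefficients $N_{\pi_{R\pm},\rho}^{\pi_{R\pm}}$ vanishing. The other quantitative point, the verification that $\ind_{\H_{R,+}} Q = 1$ via the analysis of $\ker G_0^\pR$ on the minimal-energy space at $h = c/24$, is the remaining delicate ingredient, but it is already established in \cite{CKL}.
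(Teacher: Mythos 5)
Your proposal is correct and follows essentially the same route as the paper: the proposition is obtained by verifying $\ind_{\H_{R,+}}Q=1$ (cited from \cite[Sect.6]{CKL}) and the disjointness of $\pR\circ\rho$ from $\pR$ for $\rho\neq\id$ (which is exactly the fusion-coefficient criterion defining \eqref{eq:loop-SVir}), and then applying Theorem \ref{th:gen2-pairing-even}(1) with $\sigma=\id$, the separating $K_0$-class being $[s(\pR)]$ with the stated values of the pairing. Your identification of the coset $S$-matrix/Verlinde computation as the real work behind the list \eqref{eq:loop-SVir}, rather than the proposition itself, also matches the paper's presentation.
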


We can also separate the cocycles associated to $\rho\in\Delta$ from those associated to any other $\sigma\in\Delta$ as proposed in Theorem \ref{th:gen2-pairing-even}(1), but it does not give further insight into the situation, wherefore we shall stop at this point. The present example should just serve as a simple illustration of the fact that, even in the case where we have no differentiability properties and no automorphic sectors but instead $\ind_{\H_{R,+}}Q \not=0$, we still get non-trivial noncommutative geometric (cohomology) invariants. 
Many further examples may be treated in a similar way in order to study more specific aspects.

\bigskip

All in all, this section of examples strongly confirms that the general construction of Section \ref{sec:gen-ST} and the various situations treated in Theorems \ref{th:gen2-pairing-even} and \ref{th:gen2-pairing-odd} show up naturally in well-known models of superconformal nets and help towards a completely new understanding of the latter in terms of noncommutative geometry.

\bigskip

\noindent
{\bf Acknowledgements.} We would like to thank Joachim Cuntz for useful explanations. 

\bigskip

{\footnotesize

}

\end{document}